\newtheorem{thm}{Theorem}[section]
\newtheorem{cor}[thm]{Corollary}
\newtheorem{prop}[thm]{Proposition}
\newtheorem*{main}{Main Theorem}{}
\newtheorem{lemma}[thm]{Lemma}
\theoremstyle{definition}
\newtheorem{defn}[thm]{Definition}
\newtheorem{example}[thm]{Example}
\theoremstyle{remark}
\numberwithin{equation}{section}
  \def\R{{\mathbb R}}
  \def\Z{{\mathbb Z}}
  \def\N{{\mathbb N}}
    \def\T{{\mathcal T}}
  \def\F{{\mathcal F}}
  \def\diam{\operatorname{diam}}
\def\cK{{\mathcal K}}
  \def\sig{\sigma}
  \renewcommand{\>}{\rangle}
  \newcommand{\al}{\alpha}
\newcommand{\from}{\colon}
  \newcommand{\p}{\partial}  
\newcommand{\maps}{Maps}
  \newcommand{\cR}{\mathcal{R}}
\begin{document}

\title[Big Out]{Groups of proper homotopy equivalences of
graphs and Nielsen Realization}

%    Only \author and \address are required; other information is
%    optional.  Remove any unused author tags.

%    author one information
% \author[short version for running head]{name for top of paper}
\author{Yael Algom-Kfir}
\address{Department of Mathematics, University of Haifa, 199 Abba Khoushy Avenue
Mount Carmel,
Haifa, 3498838
Israel}
\curraddr{}
\email{yalgom@univ.haifa.ac.il}
\thanks{}

%    author two information
\author{Mladen Bestvina}
\address{Department of Mathematics, University of Utah, 155 S 1400 E, RM 233
Salt Lake City, UT 84112, USA}
\curraddr{}
\email{mladen.bestvina@utah.edu}
\thanks{The second author gratefully acknowledges the support by the National Science Foundation
under the grant number DMS-1905720}

%    The 2020 edition of the Mathematics Subject Classification is
%    the current definitive version.
\subjclass[2020]{57S99, 20F65}

\date{November 2022}

\begin{abstract}
    For a locally finite connected graph $X$ we consider the
  group $\maps(X)$ of proper homotopy equivalences of $X$. We show
  that it has a natural Polish group topology, and we propose these groups as an
  analog of big mapping class groups. We prove the Nielsen Realization
  theorem: if $H$ is a compact subgroup of $\maps(X)$ then $X$ is
  proper homotopy equivalent to a graph $Y$ so that $H$ is realized by
  simplicial isomorphisms of $Y$.
\end{abstract}

\maketitle

%    Text of article.
\section{Introduction}

The group $Out(F_n)$ of outer automorphisms of the free group of rank
$n$ can be thought of as the group of homotopy
equivalences of a finite graph $X$ with $\pi_1(X)\cong F_n$, up to
homotopy. 
In this paper we begin the study of the analogous group
associated with a {\it locally} finite graph $X$. 

\begin{defn} Let $X$ be a locally finite connected graph.
  The {\it mapping class
    group} $\maps(X)$ of $X$ is the group of proper homotopy
  equivalences of $X$, up to proper homotopy.
\end{defn}

Recall that $f:X\to Y$ is a {\it proper homotopy equivalence} if it is
proper and there is a proper map $g:Y\to X$ such that both $fg$ and
$gf$ are properly homotopic to the identity. For an example of a
proper map which is homotopy equivalence but not a proper homotopy
equivalence see Example \ref{noninvertible}.

We will equip $\maps(X)$ with a natural topology which will make it a
Polish group (recall that this means that the underlying topological
space is separable and admits a complete metric). See Section \ref{topology}.
We thus propose $\maps(X)$ as the ``big $Out(F_n)$'' equivalent of
mapping class groups of surfaces of infinite type (or ``big mapping
class groups''), for a survey of the subject see \cite{survey}.
Comparison with mapping class groups has shown to be
very useful in the study of $Out(F_n)$, and we expect that comparison
between $\maps(X)$ and big mapping class groups will likewise prove
fruitful. We remark that the group of all automorphisms
$Aut(F_\infty)$ of the free group of countable rank has a natural
structure of a Polish group (e.g. it is a closed subgroup of the
Polish group of all permutations of $F_\infty$), and so does
$Out(F_\infty)$ since the group of inner automorphisms is
discrete. However, the groups $\maps(X)$ seem more appealing as they
have a more topological flavor and come in great variety since they
depend on the graph $X$. Even when $X$ is a tree the group $Maps(X)$
is of interest, it coincides with the group $Homeo(\p X)$ of
homeomorphisms of the space of ends $\p X$ of $X$ (see Corollary \ref{sigma}).
Note that if $h:X\to Y$ is a proper homotopy equivalence with inverse
$h':Y\to X$, then $f\mapsto hfh'$ induces an isomorphism $Maps(X)\to
Maps(Y)$, which will turn out to be an isomorphism of topological
groups, see Corollary \ref{topiso}.

In this paper we focus on compact subgroups of $\maps(X)$ and prove
the following version of the Nielsen Realization theorem.

\begin{main}
  Let $H$ be a compact subgroup of $\maps(X)$. Then there is a locally
  finite graph $Y$ proper homotopy equivalent to $X$ so that under the
  induced isomorphism $\maps(X)\cong\maps(Y)$ the group $H$ is
  realized as a group of simplicial isomorphisms of $Y$.
\end{main}

Recall that the original Nielsen Realization theorem for finite type
surfaces with negative Euler characteristic was proved by Kerckhoff
\cite{kerckhoff}, stating that any finite subgroup of the mapping
class group of a surface of negative Euler characteristic can be
realized by isometries of a complete hyperbolic metric with finite
area. The version for $Out(F_n)$, proved in \cite{NR1,NR2,NR3,NR4},
states that a finite subgroup of $Out(F_n)$ can be realized as a group
of simplicial isomorphisms of a finite graph with fundamental group
$F_n$. For big mapping class groups Nielsen Realization was proved
recently by Afton-Calegari-Chen-Lyman \cite{rylee-etal}. Among the
consequences is that compact subgroups of big mapping class groups are
finite. This is not the case for $\maps(X)$. For example, let $X$ be
the graph obtained from $[0,\infty)$ by attaching two loops at every
  integer point. The group of symmetries of this graph is the compact
  group $H^{\infty}= \prod_{i=1}^\infty H$, where $H$ is the group of
  symmetries of order 8 of the wedge of two circles. Note that $X$ is
  proper homotopy equivalent to the graph $Y$ obtained from
  $[0,\infty)$ by attaching three circles at every integer point, and
    the group of symmetries of $Y$ is $G^\infty$, where $G$ is the
    group of order 48 of symmetries of the wedge of three circles. The
    groups $\maps(X)$ and $\maps(Y)$ are isomorphic as topological
    groups, but the realization using different graphs displays
    different compact subgroups.

In addition to big mapping class groups $Mod(\Sigma)$ and groups
$Maps(X)$, the groups $Homeo(Z)$ of homeomorphism groups of compact
totally disconnected metrizable spaces with compact-open topology have
many similar properties and are studied more classically. Excluding
the cases when $\Sigma$ and $X$ have finite type and $Z$ is finite,
all these groups have underlying space homeomorphic to the
irrationals, they all admit clopen subgroups forming a basis of
neighborhoods of the identity, and they all satisfy the Nielsen
Realization theorem (for $Homeo(Z)$ this is proved quickly in Section
\ref{5}, it states that for any compact subgroup $H<Homeo(Z)$ there
is a metric on $Z$ so that $H$ consists of isometries).

Since this paper was first circulated, Domat, Hoganson and Kwak
\cite{DHK} investigated the coarse geometry of the pure subgroup
$PMaps(X)$ of $Maps(X)$. Their work points out the differences between
these three classes of topological groups.

\vskip 1cm

{\bf Plan of the paper.} We start by recalling the Classification
theorem for locally finite graphs in Section
\ref{s:classification}. We also introduce the notation and review the
homotopy extension theorem in our setting and some of its
consequences. In Section \ref{s:algebra} we explore the natural
homomorphism $\maps(X)\to Out(\pi_1(X))$ and in particular we look for
conditions that guarantee that a proper map $X\to X$ is properly
homotopic to the identity. In the case when $X$ is a core graph
(i.e. it is the union of immersed loops) the criterion is particularly
simple: if $f$ induces the identity in $Out(\pi_1(X))$, it is properly
homotopic to the identity. In the other extreme, when $X$ is a tree,
$f$ is properly homotopic to the identity whenever it fixes the end
space $\partial X$. The general case is more complicated since rays
attached to the core graph could wrap around the core, but we show
that if $f$ is identity in $\pi_1$, fixes the ends, and preserves
proper lines, then $f$ is properly homotopic to the identity.

Section \ref{topology} is devoted to defining the topology on
$\maps(X)$ and establishing that it is a Polish group. As long as $X$
is of infinite type, we show that the underlying topological space of
$\maps(X)$ is homeomorphic to the set of irrationals, but of course
the group structure will depend on $X$.

The remainder of the paper is devoted to the proof of the Main
Theorem. This is also divided into cases, with the two extremes of $X$
being a core graph and being a tree discussed first. When $X$ is a
tree, by averaging we find an $H$-invariant metric on the space of
ends $\p X$. From this we construct $H$-invariant finite covers by
disjoint clopen sets that refine each other and with mesh going to
0. The mapping telescope of this sequence is the desired tree $Y$.

The heart of the argument is the case when $X$ is a core graph. For
concreteness imagine that $X$ is the graph obtained from the ray
$[0,\infty)$ by attaching a circle at every integer point. We then
  cover $[0,\infty)$ by large intervals $J_1,J_2,\cdots$ so that
    $J_i\cap J_j=\emptyset$ if $|i-j|>1$ and so that $J_i\cap J_{i+1}$
    are large as well, controlling the properness of elements of
    $H$. Each $J_i$ and $J_{i}\cap J_{i+1}$ defines a subgraph of $X$
    and a free factor of $\pi_1(X)$. By intersecting the
    $H$-translates of these free factors we obtain $H$-invariant free
    factors $F_i^*$ and $F_{i,i+1}^*$ respectively. Using Nielsen
    Realization in finite rank we find finite graphs $\Gamma_{i,i+1}$
    where $H$ acts by simplicial isomorphisms realizing
    $F_{i,i+1}^*$. We then use the Relative Nielsen Realization, due
    to Hensel-Kielak \cite{hensel-kielak}, to construct finite graphs
    $\Gamma_i$ where $H$ acts by simplicial isomorphisms realizing
    $F_i^*$, and that contain $\Gamma_{i-1,i}$ and $\Gamma_{i,i+1}$ as
    disjoint invariant subgraphs. Finally, we glue the $\Gamma_i$'s
    along these subgraphs to obtain $Y$. In general, when $X$ is a
    tree with circles attached at vertices, the above outline still
    works, but instead of free factors we have to consider free factor
    systems which makes the notation a bit more complicated.

    In the general case, we first use the case of core graphs to
    reduce to the situation where $H$ is already acting on the core by
    simplicial isomorphisms. The graph $X$ is obtained from the core
    by attaching trees, and the central part of the proof in this case
    is to see how to attach new trees in an equivariant fashion. The
    new trees are going to be mapping telescopes made of partition
    elements (as in the tree case) subordinate to suitable clopen sets
    in $\p X$. To
    accomplish this we prove a fixed point theorem (see Lemma
    \ref{fixed}) that provides a suitable point in the core where
    these telescopes are attached.

    In order to verify that the action of $H$ on the new graph $Y$ is
    conjugate to the given action on $X$ we use the machinery
    developed in Section \ref{s:algebra}, see Corollary
    \ref{criterion}.

\section{The classification of locally finite connected
  graphs}\label{s:classification}

Let $X$ be a locally finite infinite connected graph. The fundamental
group $\pi_1(X)$ is free and we denote its rank by $g(X)\in
\{0,1,2,\cdots,\infty\}$ and think of it as the ``genus'' of
$X$. Let $$\p X=\underset\longleftarrow\lim_{K\subset
  X}\pi_0(X\smallsetminus K)$$ be the space of ends of $X$ with its
usual inverse limit topology, where the limit runs over all compact
subsets $K\subset X$ . Then $\p X$ is a totally disconnected compact
metrizable space (recall that these are precisely the spaces
homeomorphic to a closed subset of the Cantor set). The union $\hat
X=X\sqcup\p X$ has a natural topology that makes it compact; it is the
{\it Freudenthal} (or {\it end}) compactification of $X$. The basis of
open sets in $\hat X$ consists of open sets in $X$, and for every
compact $K\subset X$ and every
component $U$ of $X\smallsetminus K$ the set $\hat U$ which is the
union of $U$ and the set of ends that map to $U$.  We will sometimes
abuse notation and talk about a neighborhood $U$ in $X$ of an end
$\beta\in\p X$; what we mean is the intersection of such a
neighborhood $\hat U$ in $\hat X$ with $X$. The end compactification
can also be constructed in the same way for connected, locally finite
cell complexes.  Every proper map $f:X\to Y$ between such complexes
extends continuously to a map $\hat X\to\hat Y$ between their end
compactifications. For simplicity we will usually denote this extension, as
well as its restriction $\p X\to\p Y$, by $f$ as well. Properly
homotopic maps induce the same map between the boundaries.

Denote by
$X_g\subset X$ the {\it core} of $X$, i.e. the smallest subgraph that
contains all immersed loops. Thus $X_g=\emptyset$ precisely when $X$
is a tree. Let $\p X_g\subseteq \p X$ be the space of ends of $X_g$;
it is a closed subspace of $\p X$ (and consists of ends ``accumulated
by genus''). Thus $\p X_g=\emptyset$ precisely when $g(X)<\infty$. In
general, $X$ is either a tree or is obtained from $X_g$ by attaching
trees.

\begin{defn}[Characteristic pairs]
If $g(X)<\infty$, its characteristic pair is $(\p X,g(X))$, otherwise
its characteristic pair is $(\p X, \p X_g)$.
\end{defn}

The following is the analog of Ker\'ekj\'art\'o's classification
theorem for surfaces and was proved by
Ayala-Dominguez-Marquez-Quintero \cite{admq}.

\begin{thm}\label{graphClassification}
Let $X, Y$ be locally finite connected graphs. Then a homeomorphism of
characteristic pairs extends to a proper homotopy equivalence. If $X$
and $Y$ are trees the extension is unique up to proper homotopy. 
\end{thm}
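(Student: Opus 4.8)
The plan is to attach to each admissible characteristic pair a \emph{standard graph}, to show that every locally finite connected graph is properly homotopy equivalent to the standard graph with the same characteristic pair, and then to read off both assertions. Throughout I will use the proper homotopy extension theorem together with its consequence that collapsing a finite subtree of a locally finite graph is a proper homotopy equivalence. For a compact totally disconnected metrizable space $Z$, fix a metric and a nested sequence $\mathcal P_0=\{Z\}\prec\mathcal P_1\prec\cdots$ of finite clopen partitions of $Z$ with $\operatorname{mesh}(\mathcal P_n)\to0$, and let $T(Z)$ be the associated \emph{partition tree}: one vertex for each pair $(n,P)$ with $P\in\mathcal P_n$, the vertex of $(n+1,P)$ joined to that of $(n,Q)$ whenever $P\subseteq Q$. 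Since each $\mathcal P_n$ is finite, $T(Z)$ is locally finite, and the mesh condition gives a canonical homeomorphism $\partial T(Z)\cong Z$ (an end is the nested sequence $P_0\supseteq P_1\supseteq\cdots$ of partition elements met by the corresponding ray from the root, and maps to the unique point of $\bigcap_nP_n$). If $Z_g\subseteq Z$ is closed, let $T(Z)_g\subseteq T(Z)$ be the subtree spanned by the vertices $(n,P)$ with $P\cap Z_g\neq\emptyset$; it is connected with $\partial T(Z)_g\cong Z_g$. I then define $\Sigma(Z,0):=T(Z)$; for finite $g\ge1$, $\Sigma(Z,g):=T(Z)$ with $g$ loops wedged on at the root vertex (and $\Sigma(\emptyset,g):=\bigvee_gS^1$); and for $Z_g\neq\emptyset$, $\Sigma(Z,Z_g):=T(Z)$ with one loop wedged on at each vertex of $T(Z)_g$. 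A short check, using that an immersed loop cannot run into a branch of $T(Z)$ carrying no wedged loops without backtracking, shows each $\Sigma$ is locally finite with the prescribed characteristic pair.

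Next I would show that an arbitrary locally finite connected graph $X$ is properly homotopy equivalent to the standard graph with its characteristic pair. If $g(X)<\infty$, the core $X_g$ is a finite graph; I enlarge it inside $X$ by adjoining finitely many arcs to a finite connected subgraph $C\supseteq X_g$ with $g(C)=g(X)$, and subdivide so that $\overline{X\setminus C}$ meets $C$ only in vertices. An Euler-characteristic argument with embedded loops shows each component of $\overline{X\setminus C}$ is a tree meeting $C$ in exactly one vertex. Collapsing a maximal tree of $C$ is then a proper homotopy equivalence carrying $X$ onto a graph which is a tree $T$ with $g(X)$ loops attached at one vertex, where $\partial T\cong\partial X$; applying the tree case below to identify $T$ with $T(\partial X)$ and sliding the loops to the root yields $X\simeq\Sigma(\partial X,g(X))$. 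If $g(X)=\infty$, the same scheme is run without collapsing the now noncompact core: from an exhaustion $K_0\subseteq K_1\subseteq\cdots$ of $X$ one records the clopen partitions $\pi_0(X\setminus K_n)$ of $\partial X$ and the sub-family of pieces whose closures carry immersed loops, which cut out $\partial X_g$; one builds $\Sigma(\partial X,\partial X_g)$ from this data, and constructs the proper homotopy equivalence $X\to\Sigma(\partial X,\partial X_g)$ level by level over the exhaustion, at each stage collapsing a finite subtree and using the finite-rank fact that a connected graph of rank $r$ is homotopy equivalent to $\bigvee_rS^1$ to carry the genus to the intended vertices.

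For the tree case a homeomorphism of characteristic pairs is just a homeomorphism $\phi\colon\partial X\to\partial Y$. Interleaving two partition sequences of the same totally disconnected compactum shows their partition trees are properly homotopy equivalent by a map inducing the identity on the boundary, so transporting a partition sequence of $\partial X$ across $\phi$ gives a proper homotopy equivalence $X\simeq T(\partial X)\xrightarrow{\ \phi\ }T(\partial Y)\simeq Y$ inducing $\phi$; in the same way, for arbitrary $X$ and $Y$ one builds the two standard graphs from data related by $\phi$ so that $\phi$ induces a simplicial isomorphism between them, which together with the reduction step upgrades this to the general existence statement. For uniqueness in the tree case it suffices to prove that any two proper maps $f,g\colon X\to Y$ of locally finite trees with $f|_{\partial X}=g|_{\partial X}$ are properly homotopic: I build a homotopy $f\simeq g$ over a finite part of $X$ using that $Y$ is contractible, and near each end $\beta$ inside a neighbourhood of $\phi(\beta)$ whose intersection with $Y$ is a disjoint union of trees — possible because $f$ and $g$ agree on $\partial X$ — so that the resulting homotopy is automatically proper.

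The step I expect to be the real obstacle is the infinite-genus half of the reduction: one must carry out the level-by-level construction so that the genus accumulates on exactly the right vertices of the model while every intermediate homotopy stays proper, and this amounts to an infinite, carefully book-kept version of the finite-rank matching used above. The remaining ingredients — the model construction, the finite-genus collapse, the interleaving of partition sequences, and the properness bookkeeping for tree homotopies — are comparatively routine once the infinite-genus case is in hand.
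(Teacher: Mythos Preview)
The paper does not prove this theorem: it is quoted from Ayala--Dominguez--M\'arquez--Quintero \cite{admq} and used as a black box. Immediately afterward the paper introduces its Standard Models, which are exactly your graphs $\Sigma(Z,g)$ and $\Sigma(Z,Z_g)$ (built on the binary Cantor tree rather than an abstract partition tree, but this is cosmetic), and records as a corollary that every locally finite connected graph is properly homotopy equivalent to one. So your overall strategy --- reduce both $X$ and $Y$ to canonical models depending only on the characteristic pair --- is precisely the approach the paper and \cite{admq} take, and your tree and finite-genus arguments are along the standard lines.

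The infinite-genus reduction, which you correctly flag as the obstacle, is indeed where the content lies, and your description (``level by level over the exhaustion, at each stage collapsing a finite subtree and carrying the genus to the intended vertices'') is not yet a proof. One must arrange that the finite homotopy equivalences at each stage are coherent so that they assemble to a single map, that the assembled map is proper, and that the genus lands on exactly the subtree $T(Z)_g$; this last point requires tracking which complementary components of each $K_n$ have infinite rank versus merely positive rank, and redistributing loops accordingly. None of this is deep, but it is the substance of the theorem and would need to be written out carefully. Since the paper treats the result as a citation there is no in-paper proof to compare against beyond the shared use of standard models.
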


In the case when the genus is finite, a homeomorphism of
characteristic pairs means a homeomorphism between the spaces of ends
together with the information that the genera are equal.

If $f:X\to X$ is a proper homotopy equivalence, then the extension
$\partial X\to\p X$ is a homeomorphism which preserves $\p X_g$. Thus
we have a well defined homomorphism
$$\sigma:\maps(X)\to Homeo(\p X,\p X_g)$$
to the group of homeomorphisms of the pair $(\p X,\p X_g)$.
The following is then an immediate corollary of the classification theorem.

\begin{cor}\label{sigma}
  The homomorphism $\sigma$ is always surjective.
  If $X$ is a tree the map $\sigma:\maps(X)\to Homeo(\partial X)$ is
  an isomorphism.
\end{cor}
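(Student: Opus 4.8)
The plan is to prove the two assertions separately. Surjectivity of $\sigma$ is essentially immediate from Theorem~\ref{graphClassification}: given $h\in Homeo(\p X,\p X_g)$, view it as a homeomorphism of the characteristic pair of $X$ to itself (in the finite-genus case $h$ automatically respects the genus, since it is the same number on both sides). By Theorem~\ref{graphClassification} it extends to a proper homotopy equivalence $f:X\to X$, i.e. an element of $\maps(X)$, and by construction the induced boundary map is $h$, so $\sigma(f)=h$. The only thing to check is that the boundary extension produced by the classification theorem genuinely agrees with $h$ on $\p X$ and not merely up to isotopy; but $\p X$ is totally disconnected, so any two homeomorphisms of it that are "isotopic" in any reasonable sense are in fact equal, and in any case the statement of Theorem~\ref{graphClassification} is that the given homeomorphism of characteristic pairs extends, so $f$ restricts to $h$ on the nose.

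For the tree case we must show $\sigma$ is injective when $X$ is a tree (surjectivity already being handled, and noting that $\p X_g=\emptyset$ so $Homeo(\p X,\p X_g)=Homeo(\p X)$). Suppose $f\in\maps(X)$ with $\sigma(f)=\mathrm{id}_{\p X}$. We want to conclude $f$ is properly homotopic to the identity, i.e. $f=1$ in $\maps(X)$. Here I would invoke the uniqueness clause of Theorem~\ref{graphClassification}: since $X$ and $Y=X$ are trees and the homeomorphism of characteristic pairs in question is the identity of $\p X$, the extension to a proper homotopy equivalence is unique up to proper homotopy. Both $f$ and $\mathrm{id}_X$ are proper homotopy equivalences $X\to X$ inducing the identity on $\p X$, so by uniqueness they are properly homotopic; hence $f=1$ in $\maps(X)$. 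Thus $\sigma$ is injective, and combined with surjectivity it is an isomorphism.

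The main (really the only) subtlety is making sure the hypotheses of the uniqueness clause are applied correctly: uniqueness in Theorem~\ref{graphClassification} is stated for the extension of a \emph{given} homeomorphism of characteristic pairs, so one needs that a proper homotopy equivalence $f$ of a tree is determined, up to proper homotopy, by its boundary map — which is exactly what the theorem gives once we know (from the discussion preceding Corollary~\ref{sigma}) that $f$ induces a well-defined homeomorphism of $\p X$. There is no genus data to track in the tree case, so no complication arises there. One should also remark that $\sigma$ is a group homomorphism, which is immediate since the boundary extension of a composition is the composition of the boundary extensions and the boundary extension of the identity is the identity; this is already recorded in the paragraph defining $\sigma$. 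Assembling these observations gives the corollary.
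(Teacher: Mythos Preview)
Your proof is correct and is exactly the approach the paper has in mind: the corollary is stated there as an ``immediate corollary of the classification theorem'' with no further argument, and you have correctly unpacked this by using the existence part of Theorem~\ref{graphClassification} for surjectivity and the uniqueness clause (in the tree case) for injectivity.
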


In light of this we will usually focus on the kernel of $\sigma$,
which is the {\it pure mapping class group} of $X$, and we denote it
by $P\maps(X)$.

\begin{defn}
The pure group $P\maps(X)$ is the subgroup of $\maps(X)$ consisting of
$f \in \maps(X)$ so that $f:\p X\to\p X$ is the identity.
\end{defn}

We shall make use of the following concepts and lemmas from \cite{admq}.

\begin{defn}[Standard Models]
The Cantor tree $T$ is the rooted binary tree embedded in the plane so
that its boundary is the standard trinary Cantor set in $[0,1] \times
\{0\}$.  For each closed subset $B$ of the Cantor set, let $T_B$ be
the union of the set of rays in $T$ initiating at the root and terminating in $B$. For a characteristic pair $(B, g)$ where $B$ is a closed subset of the Cantor set and $g$ is a natural number let $X_{(B,g)}$ be the tree $T_B$ with $g$ loops attached at the root. The characteristic set of $X_{(B,g)}$ is $(B,g)$. 
For two closed non-empty subsets $A \subseteq B$ of the Cantor set, let $X_{(B,A)}$ be the tree $T_B$ with a one edge loop attached at each vertex of the subtree $T_A$. Again, the characteristic set of $T_{(B,A)}$ is $(B,A)$. These trees are called the Standard Models. 
\end{defn}

\begin{figure}[t]
  \setlength{\captionindent}{0pt}
\centering
\begin{minipage}[t]{0.3\linewidth}
\includegraphics[width=\textwidth]{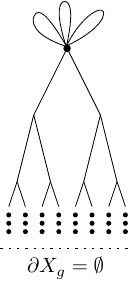}
\caption{A graph with a finitely generated fundamental group. In this case $\partial X_g = \emptyset$ and the space of ends can be any closed subset of the Cantor set. }
\label{fig:minipage1}
\end{minipage}
\quad
\begin{minipage}[t]{0.3\linewidth}
\includegraphics[width=\textwidth]{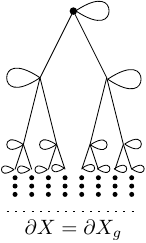}
\caption{This is a core graph, $\partial X= \partial X_g$. By deleting a part of the Cantor tree we get a standard model for $(A,A)$ where $A$ is any closed subspace of the Cantor set. }
\label{fig:minipage2}
\end{minipage}
\quad
\begin{minipage}[t]{0.3\linewidth}
\includegraphics[width=\textwidth]{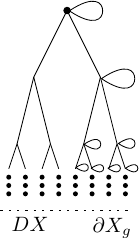}
\caption{In this case both $\partial X_g$ and its complement
  $DX:=\partial X\smallsetminus \partial X_g$ are non-empty. By deleting loops/subtrees from the space in Figure \ref{fig:minipage2} we get a model for any pair of closed subsets $\partial X = A \supset B = \partial X_g$ of the Cantor set.}
\label{fig:minipage3}
\end{minipage}
\end{figure}

\begin{cor}
Every locally finite connected infinite graph is proper homotopy equivalent to a Standard Model. 
\end{cor}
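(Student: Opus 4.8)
The plan is to reduce the statement to the Classification Theorem \ref{graphClassification} by checking that the Standard Models already realize every possible characteristic pair, so that any given $X$ has the same characteristic pair as some Standard Model.

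First I would record that, as observed above, $\partial X$ is a totally disconnected compact metrizable space, and since $X$ is infinite, locally finite and connected it contains an infinite ray, so $\partial X\neq\emptyset$. Hence $\partial X$ is homeomorphic to a non-empty closed subset of the standard Cantor set $C$; fix such a homeomorphism $\varphi\colon\partial X\to B$ with $B=\varphi(\partial X)\subseteq C$ closed.

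Next I would split according to the definition of the characteristic pair. If $g(X)<\infty$, the characteristic pair of $X$ is $(\partial X, g(X))$; take $g=g(X)$ and the Standard Model $X_{(B,g)}$, whose characteristic pair is $(B,g)$ by definition. Then $\varphi$, together with the equality of the two genera, is a homeomorphism of characteristic pairs, so by Theorem \ref{graphClassification} it extends to a proper homotopy equivalence $X\to X_{(B,g)}$; this includes the case $g=0$, where $X$ is a tree and $X_{(B,0)}=T_B$. If instead $g(X)=\infty$, the characteristic pair of $X$ is $(\partial X,\partial X_g)$, and $\partial X_g$ is non-empty (precisely because $g(X)=\infty$) and closed in $\partial X$. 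Put $A=\varphi(\partial X_g)$: since $\varphi$ is a homeomorphism and $\partial X$ is compact, $A$ is a non-empty closed subset of $B$ (hence of $C$) with $A\subseteq B$, so the Standard Model $X_{(B,A)}$ is defined and has characteristic pair $(B,A)$. The restriction of $\varphi$ is a homeomorphism of pairs $(\partial X,\partial X_g)\to(B,A)$, which by Theorem \ref{graphClassification} extends to a proper homotopy equivalence $X\to X_{(B,A)}$.

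I do not expect a real obstacle here: the entire content—that a homeomorphism of characteristic pairs is induced by a proper homotopy equivalence—is exactly Theorem \ref{graphClassification}, and the only thing to watch is the routine bookkeeping that $\partial X_g$ is carried to an admissible subset $A\subseteq B$ of the Cantor set and that the degenerate situations ($X$ a tree, $g(X)=0$, $\partial X_g=\emptyset$) all fall under the finite-genus branch, which is parametrized by the number $g$ rather than by a subset.
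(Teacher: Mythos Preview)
Your proposal is correct and is exactly the intended argument: the paper gives no proof of this corollary, treating it as immediate from Theorem~\ref{graphClassification} together with the fact that the Standard Models realize every possible characteristic pair. Your case split and bookkeeping are precisely what is needed to make this explicit.
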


In particular, we can assume that $X$ has no valence 1 vertices, and
that $X=X_g$ if $\p X_g=\p X$.

The following is the standard Homotopy Extension Theorem, see
\cite{hatcher}. We will use it often, usually without saying it. Given a subgraph $Y \subset X$, the frontier of $Y$ is the set of vertices in $Y - int(Y)$ denoted $Fr(Y)$. 

\begin{prop}\label{pasting_homotopies}
Let $Y$ be a subgraph of $X$, let $H \from Y \times I \to X$ be a
proper homotopy from
$h$ to $f$. Let $u \from X \to X$ be a proper map so that $u|_Y =
h$. Then there is a proper homotopy $H' \from X\times I \to X$
extending $H$ and $u$, i.e. $H'(x,0)=u(x)$.
Moreover, if $K \subset X$ is a subset so that $H(Fr(Y) \times I)
\cap K = \emptyset$ and $u(\overline{X\smallsetminus Y}) \cap K =
\emptyset$ then $H'((X\smallsetminus Y) \times I) \cap K = \emptyset$. 
\end{prop}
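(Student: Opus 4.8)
The plan is to run the standard proof of the homotopy extension property for CW pairs, but with a retraction chosen to be \emph{cellwise local} so that properness survives, and then to chase the cells that meet $Fr(Y)$ in order to get the last assertion. Concretely, let $A:=(X\times\{0\})\cup(Y\times I)$ and build the usual retraction $r\from X\times I\to A$ one cell of $X\smallsetminus Y$ at a time: on $Y\times I$ put $r=\mathrm{id}$; for a vertex $v\notin Y$ collapse $\{v\}\times I$ to $\{(v,0)\}$; and for an edge $e\notin Y$ with $\ov e=e\cup\{v_0,v_1\}$, writing $V:=\ov e\cap Y\subseteq\{v_0,v_1\}$ for the set of endpoints lying in $Y$, retract the square $\ov e\times I$ onto $(\ov e\times\{0\})\cup(V\times I)$ by a radial projection that fixes this set and sends the side $\{v_i\}\times I$ over an endpoint $v_i\notin Y$ to the corner $(v_i,0)$. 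These prescriptions agree on overlaps, so $r$ is a well-defined retraction, and by construction it is cellwise local: $r(\ov c\times I)\subseteq\ov c\times I$ for every closed cell $\ov c$, and more precisely $r(\ov e\times I)\subseteq(\ov e\times\{0\})\cup(V\times I)$ for an edge $e\notin Y$, while $r(\{v\}\times I)=\{(v,0)\}$ for a vertex $v\notin Y$.

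Next, define $F\from A\to X$ by $F(x,0)=u(x)$ and $F(y,s)=H(y,s)$; the two formulas agree on $Y\times\{0\}$ since $u|_Y=h=H(\cdot,0)$, so $F$ is continuous, and set $H':=F\circ r$. Because $r$ is the identity on $X\times\{0\}$ and on $Y\times I$, we get $H'(\cdot,0)=u$ and $H'|_{Y\times I}=H$, so $H'$ is a homotopy extending $H$. To see $H'$ is proper, let $L\subseteq X$ be compact. Then $(H')^{-1}(L)=r^{-1}(F^{-1}(L))$ and $F^{-1}(L)=(u^{-1}(L)\times\{0\})\cup H^{-1}(L)\subseteq C\times I$, where $C:=u^{-1}(L)\cup p_X(H^{-1}(L))$ is compact by properness of $u$ and $H$ (here $p_X$ is the projection to the $X$-factor). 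By cellwise locality, any $(x,t)$ with $r(x,t)\in C\times I$ has $x$ in a closed cell meeting $C$; by local finiteness of $X$ only finitely many closed cells meet the compact set $C$, so their union $N$ is compact, and $(H')^{-1}(L)$, being a closed subset of $N\times I$, is compact. Hence $H'$ is a proper homotopy.

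For the ``moreover'' clause, suppose $H(Fr(Y)\times I)\cap K=\emptyset$ and $u(\ov{X\smallsetminus Y})\cap K=\emptyset$, and fix $(x,t)$ with $x\in X\smallsetminus Y$. Then the open cell of $X$ containing $x$ is not contained in the subcomplex $Y$, so $x$ is either a vertex $v\notin Y$ or an interior point of an edge $e\notin Y$; let $\ov c$ denote the corresponding closed cell, which then satisfies $\ov c\subseteq\ov{X\smallsetminus Y}$. If $r(x,t)\in X\times\{0\}$, then cellwise locality gives $p_X(r(x,t))\in\ov c\subseteq\ov{X\smallsetminus Y}$, so $H'(x,t)=u\bigl(p_X(r(x,t))\bigr)\in u(\ov{X\smallsetminus Y})$, which misses $K$. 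Otherwise $r(x,t)=(y,s)$ with $s>0$; since $r(\{v\}\times I)=\{(v,0)\}\subseteq X\times\{0\}$, the vertex case cannot occur, so $\ov c=\ov e$ and $y\in\ov e\cap Y=V$, i.e.\ $y$ is an endpoint of $e$ lying in $Y$ --- and since the edge $e\notin Y$ is incident to $y$, we have $y\notin int(Y)$, so $y\in Fr(Y)$ and $H'(x,t)=H(y,s)\in H(Fr(Y)\times I)$, which misses $K$. Either way $H'(x,t)\notin K$, as required. The one genuinely fiddly point is this last cell-chase, together with checking that the standard square retractions can be chosen to patch together cellwise-locally; the rest is routine.
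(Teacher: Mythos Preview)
Your proof is correct and follows essentially the same approach as the paper: build the extension cell-by-cell, using the standard retraction of $I\times I$ onto three sides for each edge square. The paper describes $H'$ directly (stationary on cells entirely outside $Y$, composed with the square retraction otherwise) and explicitly leaves the properness check and the ``moreover'' clause to the reader; you have simply made the factoring through the retraction $r\from X\times I\to A$ explicit and filled in those details carefully.
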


\begin{proof}
  First define the extension on the vertices $v$ of $X$ outside of $Y$
  by $H'(v,t)=u(v)$. Now let $e$ be an edge with endpoints $a,b$. If
  both $a,b$ are outside of $Y$ define $H'$ on $e\times I$ to be
  stationary as well: $H'(x,t)=u(x)$ for $x\in e$. Otherwise let
  $\alpha:[0,1]\to e$ be a parametrization of $e$.  Let $P$ be a
  retraction from $I \times I$ to $(\{0\} \times I) \cup (I \times
  \{0\}) \cup (\{1\} \times I)$. Then $e\times I$ can be identified
  with $I\times I$ (with $0\times I$ and $1\times I$ identified if $e$
  is a loop). The homotopy is already defined on $(\{0\} \times I)
  \cup (I \times \{0\}) \cup (\{1\} \times I)$ and we extend it to
  $I\times I$ by composing with $P$.

  We leave the verification that $H'$ is proper and the last sentence
  to the reader.
\end{proof}

We will also sometimes have a need to restrict $f\in Maps(X)$ to an ``invariant''
subgraph $Y\subset X$ e.g. $Y=X_g$. Since $f$ is defined only up to proper
homotopy, $Y$ will usually not satisfy $f(Y)\subseteq Y$, but this
will be true after a proper homotopy $H$. We will also want to know that
the proper homotopy class of the restriction is independent of the
choice of $H$. A bad example to keep in mind is the projection
$\pi:\R^2\to\R$. There are proper maps $f:\R\to\R^2$ such that $\pi f$
is not proper (and there are analogous examples with Cayley graphs
$Cay(\Z)\to Cay(\Z^2)$), and there are pairs of properly homotopic maps
$f,g:\R\to\R^2$ such that $\pi f$ and $\pi g$ are proper, but not
properly homotopic.

\begin{lemma}\label{restriction}
  Let $Y$ be a subgraph of $X$ such that inclusion $Y\hookrightarrow
  X$ induces an injection $\partial Y\hookrightarrow\partial X$. Let
  $\pi:X\to Y$ be a retraction with the following property: for every
  $\beta\in\partial Y$ and every neighborhood $U$ of $\beta$ in $Y$
  there is a neighborhood $V$ of $\beta$ in $X$ so that
  $\pi(V)\subseteq U$. Let $Z$ be a locally compact metrizable
  space. 
  \begin{enumerate}[(i)]
    \item If $f:Z\to X$ is a proper map such that $f(\partial
      Z)\subseteq \partial Y$ then $\pi f:Z\to Y$ is proper.
      \item If $f,g:Z\to Y$ are proper maps such that they are
        properly homotopic within $X$, then they are properly
        homotopic within $Y$.
  \end{enumerate}
\end{lemma}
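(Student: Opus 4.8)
The plan is to work throughout with the end compactifications and with the continuous extensions of proper maps to them. I will use the standard fact that a map between locally compact metrizable spaces is proper precisely when it extends to a continuous map of end compactifications sending ends to ends (the end compactification of such a space is compact Hausdorff); the nontrivial direction is immediate once the extension is known to exist, since then the preimage of a compact subset of the target, being closed in the compact end compactification of the domain and disjoint from its boundary, is a compact subset of the domain itself. The first observation is that the stated property of $\pi$ is exactly the assertion that, after identifying $\partial Y$ with its image in $\partial X$, the retraction $\pi$ extends to a continuous map $\hat\pi\colon X\cup\partial Y\to\hat Y$ which is the identity on $\partial Y$: continuity at points of $X$ is automatic, and the displayed condition is precisely continuity at the ends of $Y$.

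For (i) I would then argue as follows. Since $f$ is proper it extends to a continuous map $\hat f\colon\hat Z\to\hat X$, and the hypothesis $f(\partial Z)\subseteq\partial Y$ says $\hat f(\partial Z)\subseteq\partial Y$; hence $\hat f$ takes values in $X\cup\partial Y$, the domain of $\hat\pi$. Therefore $\hat\pi\circ\hat f\colon\hat Z\to\hat Y$ is defined and continuous, it restricts to $\pi f$ on $Z$, and it carries $\partial Z$ into $\partial Y$. Since $\hat Z$ is compact, this exhibits $\pi f$ as a proper map $Z\to Y$.

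For (ii), let $F\colon Z\times I\to X$ be a proper homotopy from $f$ to $g$. Because $\pi$ is a retraction and $f,g$ take values in $Y$, the map $\pi F\colon Z\times I\to Y$ restricts to $f$ and to $g$ at $t=0$ and $t=1$, so it suffices to show $\pi F$ is proper; for that I would apply (i) with $Z\times I$ in place of $Z$ and $F$ in place of $f$. This requires $F(\partial(Z\times I))\subseteq\partial Y$. Since $I$ is compact we have $\partial(Z\times I)=\partial Z\times I$ and $\widehat{Z\times I}=\hat Z\times I$, so the extension is $\hat F\colon\hat Z\times I\to\hat X$, which carries ends to ends. For a fixed end $\beta\in\partial Z$ the image $\hat F(\{\beta\}\times I)$ is therefore a connected subset of $\partial X$; as $\partial X$ is totally disconnected it is a single point, namely $\hat F(\beta,0)=f(\beta)\in\partial Y$. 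Hence $F(\partial Z\times I)=f(\partial Z)\subseteq\partial Y$ and (i) applies.

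I expect the one genuinely delicate step to be the observation just used in (ii): that the homotopy $F$ sends not merely the two ``endpoint'' boundary pieces $\partial Z\times\{0,1\}$ but the whole of $\partial Z\times I$ into $\partial Y$. This is exactly where total disconnectedness of $\partial X$ enters, and where the hypothesis that $\partial Y\hookrightarrow\partial X$ is injective is really felt --- in the cautionary example $\pi\colon\R^2\to\R$ that injectivity fails, and indeed $\pi$ of a properly null-homotopic line in $\R^2$ need not stay proper. The remaining ingredients (that $\hat Z$ is compact for locally compact metrizable $Z$, that $\widehat{Z\times I}=\hat Z\times I$, and that the displayed condition on $\pi$ genuinely furnishes a continuous $\hat\pi$ on $X\cup\partial Y$) are routine point-set topology that I would leave as verifications.
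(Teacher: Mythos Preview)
Your proof is correct and follows essentially the same approach as the paper: both arguments amount to verifying that $\pi f$ extends continuously over the end compactification by pushing neighborhoods through the hypothesis on $\pi$, and then reducing (ii) to (i) applied to the homotopy $F$ on $Z\times I$. Your version is more explicit on two points the paper leaves implicit --- you spell out that $\hat\pi$ is only defined on the subspace $X\cup\partial Y\subset\hat X$, and you invoke total disconnectedness of $\partial X$ to force the homotopy to be stationary on ends (the paper simply cites the earlier remark that properly homotopic maps induce the same boundary map, which encodes the same fact).
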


\begin{proof}
  To prove (i), note that if $\gamma\in \partial Z$ then
  $f(\gamma)=\beta\in\partial Y\subseteq\partial X$. If $U$ is a
  neighborhood of $\beta$ in $Y$, there is a neighborhood $V$ of
  $\beta$ in $X$ so that $\pi(V)\subseteq U$. Since $f$ is proper,
  there is a neighborhood $W$ of $\gamma$ in $Z$ so that
  $f(W)\subseteq V$. It follows that $\pi f(W)\subseteq U$, so $\pi f$
  is proper.
  For (ii), apply (i) to
  a proper homotopy $H:Z\times I\to X$. The fact that $H(\partial
  (Z\times I))\subseteq \partial Y$ follows from the assumption that
  $\partial Y\subseteq \partial X$. 
\end{proof}

\begin{prop}\label{extension}
  Suppose $X,Y$ are two locally finite connected graphs, $f:X_g\to Y$
  a proper map that induces $\overline f:\p X_g\to\p Y$ and let
  $\overline F:\p X\to\p Y$ be an extension of $\overline f$.  Then
  there is a proper map $F:X\to Y$ that extends $f$ and induces
  $\overline F$.
\end{prop}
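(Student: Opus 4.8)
The plan is to build $F$ by extending $f$ over each tree carried by the core $X_g$ and then checking that the pieces assemble into a proper map with the right boundary. First I would make two reductions. By Theorem~\ref{graphClassification} I may replace $Y$ by a standard model $Y'$, with underlying Cantor subtree $T_B\subseteq Y'$ ($B=\partial Y'$) obtained by deleting all loops: if $h\colon Y\to Y'$ is a proper homotopy equivalence with homotopy inverse $h'$, then from a map $F'\colon X\to Y'$ extending $hf$ and inducing $h\circ\bar F$ I recover $F$ by forming $h'F'$ and pushing it back to agree with $f$ on $X_g$, extending the proper homotopy $h'hf\simeq f$ over $X$ by the homotopy extension property (cf.~Proposition~\ref{pasting_homotopies}) --- properly homotopic maps induce the same boundary map. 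I may also assume $f$ is simplicial, so that $f(v)$ is a vertex, hence lies in $T_B$, for each vertex $v$. The point of this reduction is that $T_B$ is a spanning tree of $Y'$ that is well behaved with respect to complements of compacta: for every compact subgraph $K\subseteq Y'$ and every component $W$ of $Y'\smallsetminus K$, the graph $W\cap T_B$ is connected (deleting loops does not affect connectivity), and, being a tree, it admits unique geodesics and convex hulls.

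Recall next that the closures $T_1,T_2,\dots$ of the components of $X\smallsetminus X_g$ are trees, each meeting $X_g$ in a single vertex $v_i$ (two meeting points would produce an immersed loop outside $X_g$), and that $\partial X=\partial X_g\sqcup\bigsqcup_i\partial T_i$ as a set. Since $f$ is already given on $X_g$, to construct $F$ it is enough to produce, for each $i$, a map $F_i\colon T_i\to T_B$ with $F_i(v_i)=f(v_i)$ inducing $\phi_i:=\bar F|_{\partial T_i}\colon\partial T_i\to B=\partial T_B$; then $F:=f\cup\bigcup_iF_i$ is a well-defined map $X\to Y'$, and it only remains to check that $F$ is proper --- its continuous extension then automatically restricts to $\bar F$ on $\partial X$, since it agrees with $\hat f=\bar f$ on $\partial X_g$ and with $\phi_i$ on each $\partial T_i$.

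The heart of the argument is the construction of $F_i$, which I carry out inside the tree $T_B$ via the uniform continuity of $\phi_i$. Root $T_i$ at $v_i$, set $y:=f(v_i)\in T_B$, and for a vertex $w$ of $T_i$ let $(T_i)_w$ be the subtree below $w$. Compactness of $\partial T_i$ gives a strictly increasing $N_i\colon\N\to\N$ with $N_i(0)=0$ such that $d_{T_i}(v_i,w)\ge N_i(n)$ implies $\phi_i(\partial(T_i)_w)$ is contained in a single component of $T_B\smallsetminus B_n(y)$; since $T_B$ is a tree this component hangs off a unique geodesic issuing from $y$, and the relevant components are nested both along descending chains of vertices and across consecutive thresholds. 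Set $n_0(w):=\max\{n:N_i(n)\le d_{T_i}(v_i,w)\}$ and let $F_i(w)$ be the point at distance $n_0(w)$ from $y$ on the corresponding geodesic, extended affinely over edges. Using the nesting, adjacent vertices of $T_i$ have $F_i$-images at distance $\le1$ (one checks the two cases, according to whether the edge crosses a threshold), so $F_i$ is a continuous map with $F_i(v_i)=y$; since $d_{T_B}(y,F_i(w))=n_0(w)\to\infty$ it is proper; and a neighborhood chase with $N_i$ shows it induces $\phi_i$. (Finite subtrees of $T_i$ with no ends are collapsed to the image of their attaching vertex, which does not affect properness.) What matters for the sequel is that $F_i$ maps $\hat T_i$ into $\hat C_i$, where $C_i\subseteq T_B$ is the convex hull in $T_B$ of $\{y\}\cup\phi_i(\partial T_i)$.

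Finally I would check that $F$ is proper. Given a compact $C\subseteq Y'$, say $C\subseteq B_N(y_0)$: for each $\xi\in\partial X_g$ the end $\bar f(\xi)$ lies in some component $W_\xi$ of $Y'\smallsetminus B_N(y_0)$, and by continuity of $\hat f$ and of $\bar F$ a small enough neighborhood of $\xi$ in $\hat X$ has the $f$-image of its core part and the $\bar F$-image of its boundary part both inside $\hat W_\xi$; since $\partial X_g$ is compact, finitely many such neighborhoods suffice, yielding a compact $K_0\subseteq X_g$ so that whenever $v_i\notin K_0$ the set $\{f(v_i)\}\cup\phi_i(\partial T_i)$ lies in a single component $W$ of $Y'\smallsetminus B_N(y_0)$. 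For such $i$, $C_i\subseteq W\cap T_B$ (here $W\cap T_B$ is connected, contains $y$, and has the ends $\phi_i(\partial T_i)$), so $F_i(\hat T_i)\subseteq\hat C_i$ is disjoint from $C$. Thus only the finitely many $i$ with $v_i\in K_0$ contribute to $F^{-1}(C)$, each by the compact set $F_i^{-1}(C)$, and together with $f^{-1}(C)$ this exhibits $F^{-1}(C)$ as compact; the same neighborhood argument at a fixed $\xi$ gives continuity of the extension at the ends of $X_g$. Transferring back along $h'$ completes the proof, and the case $X_g=\emptyset$ is the same construction with $v_1,y$ chosen arbitrarily. The step I expect to be the genuine obstacle is precisely this coherence --- making the separately built $F_i$ compatible enough for $F$ to be proper and continuously extendable over $\partial X_g$ --- which is what forces the slow-down construction with image confined to the small set $\hat C_i$, and what makes the reduction to a standard model (where $T_B$ meets complements of compacta in connected sets) the natural device; without it one would instead invoke a sufficiently well-behaved, e.g.\ normal, spanning tree of $Y$.
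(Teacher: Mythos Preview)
Your proposal is correct and follows essentially the same strategy as the paper: reduce $Y$ to a Standard Model so that the underlying tree $T_B$ carries all the ends, build maps $F_i\colon T_i\to T_B$ from each attached tree separately, glue them to $f$, and use the Homotopy Extension Theorem to restore agreement with $f$ on $X_g$ after transporting back along the proper homotopy equivalence.

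The one genuine (though minor) difference is in the formula for the tree map. The paper defines $F(v)$ directly via shadows: it sets $w=\sup\bar F(Sh_X(v))$ (the deepest vertex of $T_B$ whose shadow still contains the image of the shadow of $v$) and then truncates to distance $|v|$ along $[y_0,w]$. Your construction instead precomputes, by compactness of $\partial T_i$, a uniform modulus $N_i$ and sends $w$ to depth $n_0(w)=\max\{n:N_i(n)\le |w|\}$ on the appropriate geodesic. The paper's formula is canonical and simultaneously handles the degenerate case where the image shadow is a single end (so $\sup$ lands at infinity and the truncation kicks in), while yours trades canonicity for an explicit $1$-Lipschitz bound on edges that makes continuity immediate. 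Both produce maps with the key property $F_i(\hat T_i)\subset \hat C_i$, and your properness argument via the connectedness of $W\cap T_B$ in the Standard Model is exactly the content the paper leaves implicit.
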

%\mb{removed the hypothesis that $F^{-1}(\p Y_g)\subseteq \p X_g$ and
%  rewrote the proof}

\begin{proof}
First consider the case when $X$ and $Y$ are trees, and choose base
vertices $x_0\in X$, $y_0\in Y$. We are given $\overline F:\p X\to\p Y$
and we have to construct a proper map $F:X\to Y$. If $\overline F$ is a
homeomorphism the existence of $F$ follows from the Classification
theorem (and in fact it is unique up to proper homotopy). In general,
we can construct $F$ as follows. When $v$ is a vertex of $X$ let the
shadow $Sh_X(v)\subseteq \p X$ be the set of endpoints of rays that
start at $x_0$ and pass through $v$. When $A\subseteq \p X$ contains
at least two points, let $\sup A$ be the vertex $v\in X$ with the
largest distance $|v|$ from $x_0$ satisfying $Sh_X(v)\supseteq A$. If
$A=\{\beta\}\subseteq \p X$ is a single point, define $\sup A$ to be
$\beta$ and let $|\beta|=\infty$. Make the similar definition for
subsets of $\p Y$. For a vertex $v\in X$ consider $\sup
\overline F(Sh_X(v))$. If this is a vertex $w$ at distance $|w|\leq |v|$
from $y_0$ then define $F(v)=w$, and otherwise define $F(v)$ as the
vertex at distance $|v|$ from $y_0$ along the segment (or ray)
$[y_0,w]$. Extend $F$ linearly to the edges of $X$. 

Now consider the general case. We may assume that $f$ sends vertices
to vertices. First suppose that there is a maximal tree $T\subseteq Y$
such that $\p T=\p Y$. Since $X$ is simplicial and locally finite,
$X\smallsetminus X_g$ is a countable (or finite) union of trees, and let
$T_i$ for $i \in \N$ be the closure of a component of $X\smallsetminus
X_g$. We
denote by $x_i$ the point of intersection of $T_i$ and $X_g$, so $x_i$ is
the root of $T_i$. For each $i \in \N$ let $F_i \from T_i \to
T\subseteq Y$ be the map constructed in the first paragraph so that
$F_i(x_i) = f(x_i)$ and $\partial F_i = \overline F|_{\p T_i}$.  We
define $F$ by gluing the maps $f$ and $F_i$ for all $i \in \N$.

One way to avoid constructing a special maximal tree is as
follows. Let $Z$ be a Standard Model proper homotopy equivalent to
$Y$. Then the underlying tree in $Z$ has the same ends. So we may
apply the above paragraph to the composition $X_g\to Y\to Z$ and get
an extension $X\to Z$, which we then compose with the inverse proper homotopy
equivalence $Z\to Y$ to get $F:X\to Y$. The map $F$ may not agree with
$f$ on $X_g$ but it is properly homotopic to it, so we conclude by
applying the Homotopy Extension Theorem (Proposition
\ref{pasting_homotopies}).
\end{proof}

\section{Relationship with $Out(\pi_1(X))$.} \label{s:algebra}

In this section we will investigate the relationship between $\maps(X)$
and $Out(\pi_1(X))$. First, there is a natural homomorphism
$$\Psi:\maps(X)\to Out(\pi_1(X))$$ that sends $h\in \maps(X)$ to
(the outer automophism class of) $h_*:\pi_1(X)\to\pi_1(X)$. 
If the
genus of $X$ is infinite, $\Psi$ is not onto since there are
automorphisms not realized by proper maps. On the other hand, $\Psi$
is onto when $g(X)<\infty$. We will show first that $\Psi$ is
injective if $X$ is a core graph (meaning $X=X_g$), and in general we
will describe the kernel of $\Psi$. The next theorem is analogous
to the fact that a homeomorphism of a surface with nonabelian
fundamental group that induces identity in $\pi_1$ is isotopic to the
identity, see \cite{epstein,Hernandez-Valdez-Morales}.

\begin{thm}\label{id}
Suppose $X$ is a core graph and let $f \from X \to X$ be a proper map
so that $f_*=id\in Out(\pi_1(X))$.
Then $f$ is properly homotopic to the identity on $X$. In other words,
$\Psi$ is injective.
\end{thm}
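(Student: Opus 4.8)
The plan is to build the proper homotopy from $f$ to the identity skeleton-by-skeleton, using that $X$ is a core graph so it is a union of immersed loops and has no valence-$1$ vertices. First I would fix a base vertex $x_0 \in X$ and reduce to the case $f(x_0) = x_0$: since $X$ is connected, slide $f(x_0)$ back to $x_0$ along a path, and because $X$ is a core graph, eventual properness of the homotopy is not an issue locally (the path is compact, so this first homotopy is supported on a compact set and is automatically proper). After this adjustment $f$ represents an honest automorphism $f_* \in \operatorname{Aut}(\pi_1(X,x_0))$, and the hypothesis $f_* = \operatorname{id}$ in $Out$ gives some $\gamma \in \pi_1(X,x_0)$ with $f_*(g) = \gamma g \gamma^{-1}$ for all $g$; replacing $f$ by a further homotopy that drags $x_0$ around a loop representing $\gamma^{-1}$, I may assume $f_* = \operatorname{id}$ on the nose in $\operatorname{Aut}(\pi_1(X,x_0))$.

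Next I would homotope $f$ so that it is simplicial (or at least cellular) and fixes the $0$-skeleton: for each vertex $v$, choose an edge-path from $f(v)$ to $v$; but one must be careful that the resulting homotopy is proper. Here the key point is that $X$ is locally finite and $f$ is proper, so for any compact $K$ only finitely many vertices $v$ have $f(v)$ or $v$ meeting $K$, and the homotopies can be chosen to have image contained in a controlled neighborhood — concretely, I would first make $f$ simplicial with respect to some subdivision, so $f$ already maps vertices to vertices and edges to edge-paths, and then correct vertex by vertex using that each correction only moves points a bounded combinatorial distance. The Homotopy Extension Theorem (Proposition \ref{pasting_homotopies}), together with its ``moreover'' clause controlling what happens outside a subgraph, is exactly the tool that lets me extend these vertex homotopies over the edges while keeping properness; I would apply it with $Y$ ranging over an exhaustion of $X$ by finite subgraphs. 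After this step $f$ fixes every vertex.

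With $f$ fixing the $0$-skeleton, consider a single edge $e$ with endpoints $a,b$ (possibly $a=b$). Now $f(e)$ is an edge-loop or edge-path from $a$ to $b$, and I want to homotope it rel endpoints back to $e$. This is where $f_* = \operatorname{id}$ in $\pi_1$ enters: the concatenation $f(e) \cdot \bar e$ is a loop, and one shows using $f_* = \operatorname{id}$ (ranging over a basis of loops coming from the immersed-loop structure of the core graph) that each such loop is nullhomotopic in $X$. Since $\pi_1(X)$ is free and $X$ is aspherical, nullhomotopic means the loop bounds, so $f(e)$ is homotopic rel endpoints to $e$; doing this over all edges and invoking Proposition \ref{pasting_homotopies} once more over the exhaustion produces a proper homotopy from $f$ to the identity. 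The injectivity of $\Psi$ is then immediate: if $h \in \operatorname{Maps}(X)$ has $\Psi(h) = \operatorname{id}$, apply the theorem to a proper self-map representing $h$.

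The main obstacle I expect is \emph{properness of the homotopies}, not the homotopy-theoretic bookkeeping: the bad example $\pi \colon \R^2 \to \R$ quoted before the statement is a warning that carelessly chosen homotopies on an infinite complex need not be proper. The real work is to organize the vertex- and edge-corrections so that over each finite piece of an exhaustion the correction is supported in a compact set whose diameter is controlled by the (proper, hence ``bounded on compacta'') behavior of $f$, so that the ``moreover'' clause of Proposition \ref{pasting_homotopies} applies and the infinitely many local homotopies assemble into a single proper homotopy $X \times I \to X$. The hypothesis that $X$ is a core graph is used twice: to get a convenient generating set of immersed loops detecting $\pi_1$, and to avoid the phenomenon (discussed in the introduction) of rays wrapping around the core, which is precisely what forces the extra hypotheses in the non-core case.
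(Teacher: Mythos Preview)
Your outline has the right opening moves (pin down the basepoint, kill the inner automorphism, arrange that $f$ fixes all vertices), but the third step contains a genuine error, and the properness argument---which is the entire content of the theorem---is never actually supplied.

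The error: once $f$ fixes every vertex and $f_*=id$ on $\pi_1(X,v)$, it does \emph{not} follow that $f(e)\cdot\bar e$ is nullhomotopic for each edge $e$. The hypothesis $f_*=id$ controls the basis \emph{loops}, not individual edges. On the theta graph (vertices $u,v$, edges $e_1,e_2,e_3$ from $u$ to $v$) take $f(e_1)=e_1\bar e_2 e_1$, $f(e_2)=e_1$, $f(e_3)=e_3\bar e_2 e_1$: one checks $f_*=id$ on $\pi_1(X,u)$ and both vertices are fixed, yet $f(e_2)\bar e_2=e_1\bar e_2$ is a generator. What $f_*=id$ actually buys you is that for each loop edge $x_w$ one has $f(x_w)\simeq \lambda_w^{-1}x_w\lambda_w$ where $\lambda_w=[\bar p_w\cdot f(p_w)]$ is the (generally nontrivial) loop at $w$ built from the tree path $p_w$ from $v$ to $w$.

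The missing idea: the paper does not attempt to straighten $f$ edge by edge. It writes the homotopy $H$ down globally by declaring the track of each vertex $w$ to be exactly the tightened loop $\lambda_w=[\bar p_w\cdot f(p_w)]$, observes that with this specific choice the boundary of each square $e\times I$ is nullhomotopic (trivially for tree edges, and via $f_*=id$ for the loop edges), and then proves properness by a reduced-word argument in the free basis: if $w$ lies outside a large $L$ with $f(X\smallsetminus L)\cap K=\emptyset$, then the tightened path $[f(p_w)]$ cannot cross any loop $y$ attached inside $K$, since otherwise the word $[f(p_w)]\cdot[f(x_w)]\cdot[f(\bar p_w)]$ would visibly fail to reduce to $p_w x_w\bar p_w$. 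That combinatorial step is precisely where the core-graph hypothesis does its work, and it is what your sketch replaces with an appeal to Proposition~\ref{pasting_homotopies}. That proposition only propagates a homotopy whose support you have \emph{already} controlled; it does not manufacture the control for you.
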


\begin{proof}
In the proof we will use the fundamental property of graphs that
disjoint nontrivial loops are not homotopic and that nullhomotopic
loops can be nullhomotoped within their images.  We may assume that
$X$ is a Standard Model. Note that $f$ necessarily induces the
identity on the space of ends. Indeed, if $f(\beta)\neq \beta$ for an
end $\beta$, there will be an immersed loop $\alpha$ in $X$ near $\beta$ such
that $f(\alpha)$ is disjoint from $\alpha$, and in particular $f$ does
not fix the conjugacy class of $\alpha$.

Next, we can assume, by applying a proper homotopy (using Proposition
\ref{pasting_homotopies}) that $f$ fixes all
vertices and moreover, by homotoping the root $v$ around a loop, that
$f_*:\pi_1(X,v)\to \pi_1(X,v)$ is the identity. 

We will now construct a proper homotopy between the identity and $f$. If
$w$ is a vertex, let $e_1e_2\cdots e_k$ be the edge path in the
underlying tree $T$ from $v$ to $w$ and define $H:\{w\}\times I\to X$
to be the tightened path $\bar e_k \dots \bar e_1 f(e_1) \dots
f(e_k)$. Also define $H$ on $X\times \{0\}$ to be identity and on
$X\times\{1\}$ to be $f$ (see Figure \ref{homotopyFig}). We will argue below that $H$ defined so far
is proper. If $e$ is an edge in $T$ then $H$ is defined on
$\partial(e\times I)$ and is nullhomotopic on this loop. Thus we may
extend $H$ to all such 2-cells keeping the image contained in the
image of $\partial (e\times I)$. Finally, $H$ extends to the cylinders
$x_w\times I$, where $x_w$ is the loop attached at $w$, using the fact
that $f(e_1\cdots e_k x_w \overline e_k\cdots \overline e_1)\simeq
e_1\cdots e_k x_w \overline e_k\cdots \overline e_1$. We again ensure
that the image of the extension is contained in the image of the
boundary of the 2-cell, so the extended $H$ will be proper.

\begin{figure}[h]
\begin{center}
\includegraphics{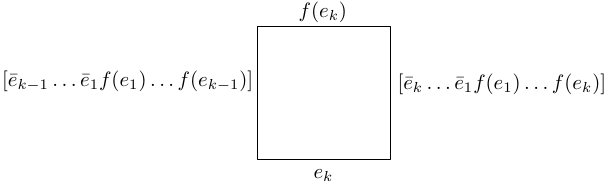}
\caption{The homotopy $H$. The brackets signify that we take the immersed path homotopic to the given one rel endpoints. \label{homotopyFig}}
\end{center}
\end{figure}

It remains to show that $H$ defined on the 1-skeleton is proper. This
is where we will use the assumption that $X$ is a core graph. Let
$K\subset X$ be a finite subgraph. Since $f$ is proper, there is a
finite subgraph $L\subset X$ such that $f(X\smallsetminus L)\subseteq
X\smallsetminus K$. Let $w$ be a vertex outside of $L$, $e_1\cdots
e_k$ the edge path from $v$ to $w$ in $T$, and $x_w$ the loop attached
to $w$. Thus $f(x_w)\cap K=\emptyset$. From the fact that
$$f(e_1 \dots e_k) f(x_w) f(\bar e_k \dots \bar e_1) \simeq e_1 \dots
e_k \cdot x_w \cdot \bar e_k \dots \bar e_1$$ we see that after
tightening $f(e_1 \dots e_k)$ does not cross any loops attached to
vertices in $K$. For example, the fundamental group can be thought of
as the free group on the attached loops, and if $y$ is the last loop
in $K$ crossed by $[f(e_1\cdots e_k)]$ the word $[f(e_1\cdots e_k)]\cdot
[f(x_w)]\cdot [f(\bar e_k\cdots \bar e_1)]$ could be tightened by
tightening the portion between the corresponding $y$ and $\bar y$, and
would not yield the trivial word.
Therefore $H(\{w\}\times I)$ is
disjoint from $K$.
\end{proof}

The fundamental group of $X$ does not ``see'' the ends of $X$ not
accumulated by genus. For example, if $X$ is a tree the mapping class
group $\maps(X)$ is isomorphic to the homeomorphism group
$Homeo(\partial X)$ and may be quite nontrivial, while
$\pi_1(X)=1$. It is therefore natural, when studying the kernel of
$\Psi$, to restrict to the pure mapping class group $P\maps(X)$.

We will now describe the kernel of the restriction
$$\Psi_P:P\maps(X)\to Out(\pi_1(X))$$ of $\Psi$. This is well-known
when $X$ is obtained from a finite graph, say of rank $r$ so that
$\pi_1(X)=F_r$, by attaching a finite number, say $n$, of rays. These
rays can be thought of equivalently as distinguished points in the
finite graph. When $n=1$ we have $\maps(X)\cong Aut(F_r)$ and when
$n>1$ then $P\maps(X)\cong Aut(F_r)\ltimes F_r^{n-1}$ with the natural
diagonal action of $Aut(F_r)$ on $F_r^{n-1}$. The $F_r^{n-1}$ factor
can be thought of as measuring the marking of $(n-1)$ 
distinguished points with respect to the remaining distinguished
point, which is considered to be the basepoint. This will be
generalized in Corollary \ref{semidirect}. When $r\geq 2$ the kernel
of $\Psi_P$ is isomorphic to $F_r^{n-1}$ 
 represented by maps that are
identity on the finite graph and send each ray $R$ to a ray of the
form $w_R R$ for some loop $w_R$ in the finite graph.

Let $X$ be a locally finite graph and we assume $DX := \p X - \p X_g
\neq \emptyset$ and choose $\al_0 \in DX$. This will be the basepoint
``at infinity''.  Let $\pi_1(X,\al_0)$ be the set of proper homotopy
classes of lines $\sig \from \R \to X$ so that $\lim_{t \to -\infty}
\sig(t) = \lim_{t \to \infty} \sig(t) = \al_0$, with concatenation as
the group operation. Notice that
concatenation makes sense since $\al_0 \in DX$ and any two rays
limiting to $\al_0$ eventually coincide, up to a proper homotopy. Given 
$x_0\in X$, there is an isomorphism
\begin{equation*}\label{pi1eq} \pi_1(X,x_0) \to
  \pi_1(X,\al_0)\end{equation*} given by $\gamma
\to \bar \rho_{x_0} \gamma \rho_{x_0}$ where $\rho_{x_0}$ is a fixed ray in $X$ from $x_0$ to $\al_0$. 
Moreover, if $f \in \maps(X)$ fixes $\al_0$ then $f$ induces a map $f_0
\in Aut(\pi_1(X,\al_0))$. 

We first consider the case of the graph $X=X_g^*$ obtained from a core
graph (or a point) $X_g$ by attaching a single ray.

\begin{lemma}\label{Ystar}
  For $X=X_g^*$ the kernel of $\Psi:\maps(X)\to Out(\pi_1X)$ (or
  $\Psi_P:P\maps(X)\to Out(\pi_1X)$) is isomorphic to
  $\pi_1(X)=\pi_1(X_g)$ when this group is nonabelian, and otherwise
  it is trivial.
\end{lemma}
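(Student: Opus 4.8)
The plan is to analyze the graph $X = X_g^*$, which is a core graph $X_g$ with a single ray $R$ attached at a vertex $p \in X_g$. The end space is $\p X = \p X_g \sqcup \{\al_0\}$ where $\al_0$ is the end of the attached ray $R$, so $DX = \{\al_0\}$ is a single point. Since any $f \in \maps(X)$ must fix $\al_0$ (it is the unique non-genus end), we have $P\maps(X) = \maps(X)$ here, and $f$ induces $f_0 \in Aut(\pi_1(X,\al_0)) \cong Aut(\pi_1(X_g))$. The composition with $\Psi$ lands in $Out$, so I would set up the exact sequence $1 \to \ker \to \maps(X) \xrightarrow{\Psi} Out(\pi_1 X)$ and identify $\ker$ directly: $f \in \ker\Psi$ iff $f_0$ is an inner automorphism of $\pi_1(X,\al_0)$, i.e. conjugation by some element $w \in \pi_1(X,\al_0) \cong \pi_1(X_g)$.

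The key steps, in order: (1) Observe that since $X_g = (X_g^*)_g$ is the core, Theorem \ref{id} (applied after noting $f$ is properly homotopic to one fixing $X_g$ setwise, via Lemma \ref{restriction} and the homotopy extension theorem) tells us that the proper homotopy class of $f|_{X_g}$ is determined by $f_*$ on $\pi_1(X_g)$; in particular, if $f \in \ker \Psi$ then, after a proper homotopy, $f$ restricts to a map on $X_g$ inducing an inner automorphism, and after a further proper homotopy $f|_{X_g} = \operatorname{id}_{X_g}$. (2) With $f$ now the identity on $X_g$, the only freedom left is how $f$ maps the ray $R$: it is a proper map $R \to X$ agreeing with the identity at the basepoint $p = R \cap X_g$ and converging to $\al_0$, hence properly homotopic rel $p$ to a ray of the form $w R$ for a loop $w$ based at $p$ in $X_g$ (here I use that any proper ray in $X$ limiting to $\al_0$ eventually coincides with $R$ up to proper homotopy, as noted in the text). (3) Define the map $\ker \Psi \to \pi_1(X_g, p)$ by $f \mapsto [w]$; check it is a well-defined homomorphism (composing two such maps concatenates the loops, using equivariance of the construction) and that it is injective (if $w$ is nullhomotopic, then $f \simeq \operatorname{id}$ properly, by nullhomotoping the loop within its image as in the proof of Theorem \ref{id}). (4) For surjectivity, given $w \in \pi_1(X_g,p)$, explicitly build $f$ equal to the identity on $X_g$ and sending $R$ to $wR$ (tightened); this is proper and a proper homotopy equivalence with inverse sending $R$ to $\bar w R$. (5) Finally, when $\pi_1(X_g)$ is abelian — i.e. trivial or $\mathbb{Z}$ — inner automorphisms are trivial, so $\ker \Psi = \ker \Psi_P$ is trivial: in the $\mathbb{Z}$ case any loop $w = x^n$ can be unwound past the ray since $f(x^n R) \simeq x^n R \simeq R$ rel nothing once we allow homotopies sliding the ray around the single loop (the ray "wrapping" around the loop is proper-homotopically trivial because $x$ commutes with itself), and in the trivial case $X_g$ is a point so there is nothing to do.

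The main obstacle I expect is step (2)/(5): making precise the claim that a ray limiting to $\al_0$ is determined up to proper homotopy rel $p$ by a \emph{well-defined} element of $\pi_1(X_g,p)$, rather than merely a conjugacy class or coset. The subtlety is that the basepoint $p$ is fixed (it is where $R$ attaches), so the homotopy cannot slide $p$ around; but one must rule out the ray itself "absorbing" part of $w$ at infinity. This is exactly why the abelian case collapses: when $\pi_1(X_g) = \mathbb{Z} = \langle x \rangle$, the ray $x^n R$ \emph{can} be slid back to $R$ because pushing the wrapping down the ray and using $x x = x x$ costs nothing — so the "loop read off at $p$" is not well-defined and only the trivial element survives. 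I would handle this by first treating the nonabelian case carefully (where disjoint nontrivial loops not being homotopic, the tool from Theorem \ref{id}, pins down $w$ exactly), and then observing separately that the construction degenerates in rank $\le 1$. The rest — that the map is a homomorphism and that $\maps = P\maps$ here — is routine given the earlier machinery.
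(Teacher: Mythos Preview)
Your proposal is correct and follows essentially the same route as the paper: reduce to $f|_{X_g}=\mathrm{id}$ via Lemma~\ref{restriction} and Theorem~\ref{id}, then send $f$ to the class $c(f)=[\bar\rho_0 f(\rho_0)]\in\pi_1(X,\al_0)$ (your $w$), and check this is an isomorphism in the nonabelian case and collapses in rank $\le 1$.

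Two small corrections. First, $P\maps(X)=\maps(X)$ is false in general: elements of $\maps(X)$ must fix $\al_0$ but can permute the ends in $\p X_g$; what is true (and sufficient) is that $\ker\Psi=\ker\Psi_P$, since any $f$ trivial in $Out(\pi_1 X)$ automatically fixes all ends by the argument opening the proof of Theorem~\ref{id}. Second, your account of the rank-one case is slightly off: ``pushing the wrapping down the ray'' is not a proper homotopy; the paper's mechanism (which your phrase ``sliding the ray around the single loop'' does capture) is to rotate the circle $X_g=S^1$ itself, which drags the attaching point of the ray around and unwinds $x^nR$ back to $R$.
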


When the genus $n=g(X)$ is finite, the lemma says that the kernel
$Aut(F_n)\to Out(F_n)$ is $F_n$ for $n>1$ and otherwise it is
trivial. 

\begin{proof}
  Let $f\in \maps(X)$ induce identity in $Out(\pi_1(X))$. Using Lemma
  \ref{restriction} applied to the nearest point projection $\pi:X\to
  X_g$ we see that after a
  proper homotopy we may assume that $f$ preserves the core $X_g$, and
  thus by Theorem \ref{id} we may assume $f$ is identity on
  $X_g$.
Let $\rho_0$ denote the geodesic ray in $X$ that intersects $X_g$ at one point and such that $X = X_g \cup \rho_0$. Let $c(f)$ be the homotopy class of $\bar \rho_0 f(\rho_0)$ in $\pi_1(X, \al_0)$. 
Then $f_0 \in Aut(\pi_1(X, \al_0))$ is just conjugation by $c(f)$.  
  The map $f\mapsto c(f)$ is a homomorphism $Ker(\Psi)\to
  \pi_1(X,\alpha_0)$. When $\pi_1(X)$ is non-abelian it is an isomorphism. 
If $X_g$ is a circle, $f$ can be homotoped to the
  identity by a homotopy that rotates the circle to unwind the
  attached ray, so $Ker(\Psi)$ is trivial.
\end{proof}

We now consider the general case. Let $X$ be a Standard Model which is
not a tree, let $X_g$ be the core subgraph of $X$, and we assume $DX
\neq \emptyset$.  Fix $\al_0 \in DX$ and let $\rho_0$ be the ray in
$X$ intersecting $X_g$ in a point and limiting to $\al_0$.  Let
$T\subset X$ be the underlying tree, and let $T_g=X_g\cap T$ be the
underlying tree in the core, and likewise let $T_g^*=T_g\cup\rho_0$ be
the underlying tree in $X_g^*=X_g\cup\rho_0$. 
Thus $X_g\subset X_g^*\subseteq X$ and both inclusions
are homotopy equivalences.  We note that restriction maps
$$P\maps(X)\to P\maps(X_g^*)\to P\maps(X_g)$$
are well-defined by Lemma \ref{restriction}, where for the retraction
$\pi$ we take the nearest point projection. In fact, we have a
factorization of $PMaps(X)\to Out(\pi_1(X))$ as
$$P\maps(X)\to P\maps(X_g^*)\to P\maps(X_g)\to Out(\pi_1(X))$$

Our next goal is to describe $Ker(P\maps(X)\to P\maps(X_g^*))$. To
that end, we introduce the group $\cR$ that measures how the rays
towards the ends in $DX$ wrap around the loops in the core graph $X_g$.

\begin{defn}
The group
$\cR$ as a set is the collection of functions $h \from DX \to \pi_1(X_g^*,
\al_0)$ satisfying 
\begin{itemize}
  \item[(R0)] $h(\al_0)=1$.
  \item[(R1)] $h$ is locally constant.
\item[(R2)] For all $\beta \in \p X_g$ and every neighborhood
  $U$ of $\beta$ in $X$ there exists a ray $\rho_U$
  from a point in $U$ to $\alpha_0$ which is the concatenation of a
    segment in $T$ and $\rho_0$ so that if $(\beta_i)_{i=1}^\infty
  \subset DX$ limits to $\beta$ then for large enough $i$,
  $h(\beta_i)= \bar\rho_U*\gamma_i*\rho_U$ where $\gamma_i$ is a loop
  contained in $U$.
\end{itemize}
The group operation in $\cR$ is pointwise multiplication in
$\pi_1(X,\al_0)$.
\end{defn}

\begin{defn}\label{defOfPhi}
We start by assigning an element $\Phi_T(f)\in\cR$ to certain proper
maps $f:X\to X$. More precisely assume:
\begin{enumerate}[(i)]
\item $f:\p X\to\p X$ is the identity, and
  \item either $DX$ is compact or
    $f_*:\pi_1(X,\al_0)\to\pi_1(X,\al_0)$ is the identity.
\end{enumerate}

Note that we do not assume that $f$ is a proper homotopy equivalence,
cf. Example \ref{noninvertible}.

If $\beta\in DX\smallsetminus\{\alpha_0\}$ 
let $\ell_\beta$ be the bi-infinite line in $T$ connecting $\al_0$ to
$\beta$.
Define the map 
\[ \begin{array}{l}
\Phi_T(f)=h \from DX \to \pi_1(X,\al_0) \\[0.2 cm]
h(\beta) = \left\{ 
\begin{array}{ll}
\ell_\beta f(\bar\ell_\beta) & \beta \in DX-\{ \al_0 \}, \\
1 & \beta = \al_0 
\end{array} \right.
\end{array} \]
\end{defn}

We claim that indeed $h \in \cR$. That $h$ is locally constant follows from the
observation that if $\beta\in DX$ there is a neighborhood $U\subset DX$ of
$\beta$ so that if $\ell$ is a line joining two distinct points in $U$
then $f(\ell)$ is properly homotopic to $\ell$.

Condition (R2) is vacuous if $DX$ is compact so suppose $f_*=id$. Thus
we may assume that $f$ is identity on $X_g^*$. Let $\ell_\beta$ be the
line in $T$ from $\al_0$ to $\beta$, and similarly let
$\ell_{\beta_i}$ be the line in $T$ from $\al_0$ to $\beta_i$.  Note
that the lines $\ell_{\beta_i}$ converge $\ell_\beta$. The map $f$
fixes $\ell_\beta$ and thus for large $i$ takes $\ell_{\beta_i}$ to a
line that agrees with $\ell_{\beta_i}$ outside a given neighborhood
$U$ of $\beta$. This proves (R2).

\begin{thm}\label{kernel}
  Assume $f \colon X\to X$ is proper, $\al_0\in DX$, and
  $f_*=id \from \pi_1(X,\alpha_0)\to\pi_1(X,\al_0)$. If $f \from \p X\to\p X$ is
  identity and
  $\Phi_T(f)=1$
  then $f$ is properly homotopic to the identity. Moreover,
  $$\Phi_T:Ker(P\maps(X)\to P\maps(X_g^*))\to\cR$$ is an isomorphism.
\end{thm}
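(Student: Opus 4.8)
The plan is to establish the ``identity criterion'' (if $\Phi_T(f)=1$ then $f\simeq\mathrm{id}$) first, and then read off the isomorphism from it together with an explicit construction in the reverse direction.

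\textbf{The identity criterion.} Assume $f$ satisfies all the hypotheses. Applying Lemma~\ref{restriction} to the nearest-point retraction $X\to X_g^*$, after a proper homotopy we may assume $f(X_g^*)\subseteq X_g^*$; and since $f_*=\mathrm{id}$ on $\pi_1(X,\al_0)\cong\pi_1(X_g^*)$, the argument of Lemma~\ref{Ystar} (Theorem~\ref{id} applied to the core $X_g$, then unwinding the ray $\rho_0$) lets us further assume $f$ is the identity on all of $X_g^*$. These homotopies leave $\Phi_T(f)$ unchanged, since a proper homotopy of $f$ induces a proper homotopy of the loops $\ell_\beta*f(\bar\ell_\beta)$ rel endpoints. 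Now $X\smallsetminus X_g^*$ is a disjoint union of trees $\mathcal{T}_i$, each joined to $X_g^*$ at a single vertex $x_i$ with $f(x_i)=x_i$, and since $X_g$ contains every immersed loop of $X$, none of the $\mathcal{T}_i$ carries a loop. For a vertex $w\in\mathcal{T}_i$ let $\gamma_w$ be the path in $\mathcal{T}_i$ from $x_i$ to $w$, and build a homotopy $H$ from the identity to $f$ that is stationary on $X_g^*$ and drags $w$ to $f(w)$ along the tightened path $\bar\gamma_w*f(\gamma_w)$; exactly as in the proof of Theorem~\ref{id} this extends over all $2$--cells (only edges of the $\mathcal{T}_i$ need attention) with the image of each cell contained in the image of its boundary.

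\textbf{Properness of $H$.} This is where $\Phi_T(f)=1$ is used. The key computation: for $w$ deep in $\mathcal{T}_i$, approaching an end $\beta\in DX$, write $\ell_\beta=\mu_i*\nu_\beta$ with $\mu_i\subseteq X_g^*$ (so $f\mu_i=\mu_i$) and $\nu_\beta=\gamma_w*\delta_w$, where $\delta_w$ is the tail of $\nu_\beta$ from $w$ to $\beta$; then $\Phi_T(f)(\beta)=1$ says $[\nu_\beta\,f(\bar\nu_\beta)]=1$ in $\pi_1(X,x_i)$, which rearranges to $\bar\gamma_w*f(\gamma_w)\simeq\delta_w*f(\bar\delta_w)$ rel endpoints. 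As $w\to\beta$ the path $\delta_w$, and hence by properness of $f$ also $f(\delta_w)$, escapes every compact set, and since tightening only deletes edges, so does the tightened path $\bar\gamma_w*f(\gamma_w)$. Together with a routine choice of the finite set outside which this applies (and the easier case of vertices in tree components attached far out, where $\gamma_w$ and $f(\gamma_w)$ already avoid the given finite subgraph), this shows $H$ is proper, so $f\simeq\mathrm{id}$.

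\textbf{The isomorphism.} Every $f$ representing a class in $\mathrm{Ker}(P\maps(X)\to P\maps(X_g^*))$ fixes $\partial X$ and, being the identity on $X_g^*$ up to proper homotopy, induces $\mathrm{id}$ on $\pi_1(X,\al_0)$; so hypotheses (i)--(ii) of Definition~\ref{defOfPhi} hold, $\Phi_T(f)\in\cR$ is defined, and by the invariance above it depends only on the proper homotopy class. It is a homomorphism: normalizing $f$ and $g$ to be the identity on $X_g^*$, the relation $g(\bar\ell_\beta)\simeq\bar\ell_\beta*\Phi_T(g)(\beta)$ together with $f_*=\mathrm{id}$ gives $(fg)(\bar\ell_\beta)\simeq f(\bar\ell_\beta)*\Phi_T(g)(\beta)$, whence $\Phi_T(fg)=\Phi_T(f)\,\Phi_T(g)$. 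Injectivity is precisely the identity criterion. For surjectivity, given $h\in\cR$: by (R1) cover $DX\smallsetminus\{\al_0\}$ by disjoint clopen sets $U$ on which $h$ is constant and nontrivial (where $h=1$, do nothing); over each such $U$ the subgraph $\widehat U\cap X$ is attached to the rest of $X$ along finitely many edges, and near a chosen attaching vertex we modify the identity map of $X$ to insert a loop representing $h|_U$ (read through the basepoint change to $\al_0$), just as with the ray-wrapping maps in the finite-rank case discussed earlier. Condition (R2)---that near an end $\beta\in\partial X_g$ the values of $h$ are loops supported in shrinking neighborhoods of $\beta$, modulo one fixed ray---is exactly what prevents the inserted loops from accumulating, so the resulting $f$ is proper; and applying the construction to $h^{-1}$ yields $f'$ with $\Phi_T(ff')=\Phi_T(f'f)=1$, so $ff'\simeq f'f\simeq\mathrm{id}$ by the identity criterion and $f$ is a proper homotopy equivalence. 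Finally $\Phi_T(f)=h$ is read off the construction.

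\textbf{Main obstacle.} The bookkeeping---extending $H$ over $2$--cells, the homomorphism identity, the basepoint changes---follows the templates of Theorem~\ref{id} and Lemma~\ref{Ystar}. The real work is the two properness verifications near $\partial X_g$: that $H$ is proper, and that the $f$ built from $h$ is proper. Both reduce to unwinding condition (R2) together with the principle that tightening only deletes edges, in the same spirit in which the proof of Theorem~\ref{id} exploits the core-graph hypothesis.
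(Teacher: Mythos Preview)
Your proof is correct and follows essentially the same strategy as the paper. The paper builds the homotopy tracks as tightened paths $f(\rho_x)\bar\rho_x$ using the ray $\rho_x$ from $x$ to $\alpha_0$ in $T$, while you use $\bar\gamma_w*f(\gamma_w)$ with $\gamma_w$ the path in $\mathcal{T}_i$ from the attaching point $x_i$ to $w$; once $f$ is normalized to be the identity on $X_g^*$ these coincide, since the portion of $\rho_w$ inside $X_g^*$ is fixed by $f$ and cancels. Your properness argument near $\partial X_g$ (trees attached far out have $\gamma_w\cup f(\gamma_w)$ already avoiding $K$, by properness of $f$) is a slightly more direct phrasing of what the paper obtains by invoking (R2) for the trivial function, and the surjectivity construction and the $ff'\simeq\mathrm{id}$ bootstrap are the same as in the paper.
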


\begin{proof}
Let $\cK=Ker(P\maps(X)\to
P\maps(X_g^*))$. 
We start by arguing that
$\Phi_T \from \cK \to \cR$ is a homomorphism. Let $f,g\in\cK$, and we
assume that $f,g$ are identity on $X_g^*$. Let $\ell_\beta$ be the line
as in the definition of $\Phi_T$. We have
\[
\Phi_T(gf)(\beta)=\ell_\beta \cdot gf(\bar\ell_\beta)=\ell_\beta\cdot
g(\bar\ell_\beta) \cdot g(\ell_\beta \cdot
f(\bar\ell_\beta))=\Phi_T(g)(\beta)\cdot 
g(\Phi_T(f)(\beta))
\]
which equals $\Phi_T(g)(\beta)\cdot \Phi_T(f)(\beta)$
since $g$ acts as the identity on $\pi_1(X,\alpha_0)$.

We next show that if $\Phi_T(f)=1$, then $f\simeq id$. We may assume
$f$ is identity on $X_g^*$. Consider the universal cover $\tilde X$ of
$X$ and let $\tilde X_g$ be the preimage of $X_g$ to $\tilde X$ (which is
connected).  Let $\tilde f$ be the lift of $f$ that restricts to the
identity on $\tilde X_g$. The assumption that $\Phi_T(f)(\beta)=1$ for
every $\beta\in DX$ amounts to saying that $\tilde f$ fixes the ends
of $\tilde X$.  The straight line homotopy $\tilde H$ from $\tilde f$
to $id$ is equivariant with respect to the deck group and descends to
the homotopy $H \from X \times I \to X$ from $f$ to $id$. It remains
to show that $H$ is proper. It is useful to describe $H$ directly. If
$x\in X$ consider a ray $\rho_x$ from $x$ to $\alpha_0$ in $X$. The
path $H(\{x\}\times I)$ is the tightened path $f(\rho_x) \bar
\rho_x$.

Let $K$ be a compact set in $X$.  As in the proof of Theorem \ref{id}
it is enough to show that there is a compact set $S$ so that for each
vertex $v$ outside $S$, $H(\{v\} \times I) \cap K = \emptyset$.

Assume $x\in X$ is in a small neighborhood of an end
$\beta\in \partial X$. If $\beta\in\partial X_g$ then $f(\rho_x)$ and
$\rho_x$ agree outside a bit larger neighborhood by the assumption
that $f$ fixes $\beta$ and $X_g^*$, so the path $H(\{x\}\times I)$ is
also close to $\beta$. If $\beta\in DX$ consider the concatenation
$\bar\rho_x \gamma$ where $\gamma$ is the ray from $x$ to $\beta$ in
$T$. This concatenation is properly homotopic to the line $\ell_\beta$
from the definition of $\Phi_T$, and so by our assumption that
$\Phi_T(f)=1$ we have
$f(\bar\rho_x \gamma)\simeq \bar\rho_x\gamma$. Since $f$ fixes
$\beta$, $f(\gamma)$ is in a bit larger neighborhood of $\gamma$, so
$f(\rho_x)\bar\rho_x\simeq f(\gamma)\bar\gamma$ is also close to
$\beta$.

Finally, we argue that $\Phi_T:\cK\to\cR$ is onto. Let $h\in\cR$.  We
define a proper homotopy equivalence $f \from X \to X$.  Let
$f|_{X_g^*} = id$. Let $S_w$ be the tree attached to a vertex $w\in
X_g^*$, i.e. the closure of a component of $T\smallsetminus
T_g^*$. Then $\partial S_w$ is compact and from the fact that $h$ is
locally constant we see that there is some distance $C$ so that for
every edge $e\subset S_w$ at distance $C$ from $w$, $h$ is constant on
the ends of the unbounded component of $S\smallsetminus e$. Define
$f|S_w$ to be identity on all edges of $S$ other than those at
distance $C$ from $w$. On such an edge $e$ define $f$ to be the
immersed path with the same endpoints as $e$ and so that if
$\ell_\beta$ is a line that crosses $e$ then $f(\ell_\beta)
\bar\ell_\beta\in\pi_1(X,\alpha_0)$ represents $h(\beta)$
(equivalently, $\bar\rho_e f(e)\bar e\rho_e$ represents $h(\beta)$ for
a suitable ray $\rho_e$ in $T$ with $e$ oriented towards
$\beta$). Since $h(\alpha_0)=1$ the map $f$ will be identity on
$X_g^*$, as no edges $e$ where we modify the identity are along the
ray to $\alpha_0$.  We must show that $f$ is a proper map. Let
$\beta\in \partial X$ and assume that the edge $e$ as above is close
to $\beta$. This means that $\beta\in \partial X_g$ (ends in $DX$ have
a neighborhood not containing any edges $e$ as above). It follows that
$f(e)$ is close to $\beta$ by (R2). 
Thus $f$ is a proper map. Let $g$ be constructed
similarly for $h^{-1}\in \cR$. Then $gf\in PMaps(X)$ has the property
that $\Phi_T(gf)=\Phi_T(g)\Phi_T(f)=h^{-1}h=id$, so $gf\simeq id$ by
the injectivity of $\Phi_T$, and similarly $fg\simeq id$.
\end{proof}

We now have a useful criterion when a proper map $X\to X$ is properly
homotopic to the identity, without assuming it is a proper homotopy
equivalence. 

\begin{cor}\label{criterion}
  Let $f:X\to X$ be proper. Then $f$ is properly homotopic to the
  identity if and only if it preserves the homotopy class of every
  oriented closed curve and the proper homotopy class of every
  oriented proper line in
  $X$ that in each direction converges to an end in $DX=\partial
  X\smallsetminus \p X_g$.
\end{cor}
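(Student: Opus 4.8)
The plan is to deduce Corollary~\ref{criterion} directly from Theorem~\ref{kernel} together with the earlier results Theorem~\ref{id}, Proposition~\ref{extension}, and Lemma~\ref{restriction}. One direction is immediate: if $f$ is properly homotopic to the identity, then it fixes the homotopy class of every closed curve, fixes $\partial X$, and preserves proper homotopy classes of proper lines, since properly homotopic maps agree on all of these invariants. So the content is the converse.

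For the converse, suppose $f$ preserves the homotopy class of every oriented closed curve and the proper homotopy class of every oriented proper line converging in both directions to ends of $DX$. First I would observe that the line condition forces $f$ to fix $DX$ pointwise, and the closed-curve condition combined with the argument in Theorem~\ref{id} forces $f$ to fix $\partial X_g$ pointwise; hence $f\colon\partial X\to\partial X$ is the identity. Next, using Lemma~\ref{restriction} with the nearest-point retraction $\pi\colon X\to X_g$, after a proper homotopy we may assume $f(X_g)\subseteq X_g$, and since $f$ preserves the conjugacy class of every loop, $f|_{X_g}$ induces the identity in $Out(\pi_1(X_g))$, so by Theorem~\ref{id} we may further homotope so that $f$ is the identity on $X_g$. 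Choosing $\alpha_0\in DX$ (if $DX=\emptyset$ then $X=X_g$ and we are already done), a homotopy rotating $X_g$ (as in Lemma~\ref{Ystar}) lets us also arrange $f_*=\mathrm{id}$ on $\pi_1(X,\alpha_0)$; here the preservation of closed-curve homotopy classes — not just conjugacy classes — together with the basepoint normalization is what pins down $f_*$ exactly. Now $f$ satisfies the hypotheses of Theorem~\ref{kernel}, so it remains to check $\Phi_T(f)=1$, i.e.\ that for every $\beta\in DX\smallsetminus\{\alpha_0\}$ the loop $\ell_\beta f(\bar\ell_\beta)$ is trivial in $\pi_1(X,\alpha_0)$.

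This last verification is the crux of the argument. The line $\ell_\beta\subset T$ runs from $\alpha_0$ to $\beta$, both in $DX$, so it is exactly the sort of proper line whose proper homotopy class is preserved by hypothesis: $f(\ell_\beta)$ is properly homotopic to $\ell_\beta$. Concatenating with $\bar\ell_\beta$ (and recalling that rays to $\alpha_0$ eventually coincide up to proper homotopy, so the concatenation is well-defined in $\pi_1(X,\alpha_0)$), we get that $\ell_\beta f(\bar\ell_\beta)$ is properly homotopic to $\ell_\beta\bar\ell_\beta$, which is trivial; hence $\Phi_T(f)(\beta)=1$. Since $\Phi_T(f)(\alpha_0)=1$ by definition, $\Phi_T(f)=1$, and Theorem~\ref{kernel} yields $f\simeq\mathrm{id}$. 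I expect the main obstacle to be the bookkeeping in the normalization steps — in particular making sure that after the successive proper homotopies (fixing ends, then fixing $X_g$, then killing $f_*$) the hypothesis about proper lines is still literally available for the specific lines $\ell_\beta$, and that the ``preserves homotopy class of closed curves'' hypothesis is strong enough to conclude $f_*=\mathrm{id}$ rather than merely that $f_*$ is inner; a base-point / change-of-maximal-tree argument as in Lemma~\ref{Ystar} should handle this, but it needs to be stated carefully.
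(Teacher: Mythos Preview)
Your overall strategy---reduce to Theorem~\ref{kernel} by checking that $f$ fixes $\partial X$, that $f_*=\mathrm{id}$ on $\pi_1(X,\alpha_0)$, and that $\Phi_T(f)=1$---is exactly the paper's strategy, and your verification of $\Phi_T(f)=1$ from the line hypothesis is correct. The problem is your step~4.

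The ``rotating $X_g$'' argument you invoke from Lemma~\ref{Ystar} is only carried out there in the abelian case $X_g=S^1$; when $\pi_1(X)$ is nonabelian, Lemma~\ref{Ystar} says precisely that there \emph{are} maps which are the identity on $X_g$ yet induce a nontrivial inner automorphism on $\pi_1(X,\alpha_0)$, and these cannot be homotoped away while keeping $f|_{X_g}=\mathrm{id}$. So the normalization you propose does not go through, and the worry you flag at the end (``strong enough to conclude $f_*=\mathrm{id}$ rather than merely inner'') is a real gap: closed curves only see conjugacy classes.

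The fix is much simpler than your detour and is what the paper does: recall that $\pi_1(X,\alpha_0)$ is \emph{defined} as the group of proper homotopy classes of lines from $\alpha_0$ to $\alpha_0$. Your hypothesis says $f$ preserves the proper homotopy class of every such line, so $f_*=\mathrm{id}$ on $\pi_1(X,\alpha_0)$ immediately---no normalization on $X_g$ is needed at all. With this observation your steps 2--4 become unnecessary: once you know $f$ fixes $\partial X$ (which you argue correctly) and $f_*=\mathrm{id}$ on $\pi_1(X,\alpha_0)$, you can compute $\Phi_T(f)$ directly and apply Theorem~\ref{kernel}.
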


\begin{proof}
  If $X$ is a core graph we are assuming that $f_*\in Out(\pi_1(X))$
  preserves all conjugacy classes in $\pi_1(X)$. This implies that $f_*=id$ 
  and the conclusion follows from Theorem \ref{id}.
  If $DX\neq\emptyset$ choose some $\al_0\in DX$. We now see that
  $f_*:\pi_1(X,\al_0)\to\pi_1(X,\al_0)$ is identity, since $f$
  preserves lines that start and end at $\al_0$, and similarly $f$
  fixes all ends of $X$. Finally, we see that
  $\Phi_T(f)=1$ since $f$ preserves all lines joining $\al_0$ with any
  $\beta\in DX$, so the statement follows from Theorem \ref{kernel}.
\end{proof}

\begin{cor}\label{criterion2}
  Suppose $f:X\to Y$ is proper, induces a homeomorphism $\p X\to\p Y$
  and the restriction $X_g\to Y_g$ is a proper homotopy equivalence. Then $f$
  is a proper homotopy equivalence.
\end{cor}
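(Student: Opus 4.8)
The plan is to build a proper homotopy inverse for $f$ directly; the single genuine difficulty is the winding of $f$ along the trees attached to the core, which the candidate inverse must be arranged to cancel. First a reduction: since $f|_{X_g}\colon X_g\to Y_g$ is a proper homotopy equivalence it induces a homeomorphism $\partial X_g\to\partial Y_g$, which is the restriction of $f\colon\partial X\to\partial Y$ (arrange $f(X_g)\subseteq Y_g$ by a proper homotopy and use the commuting square of proper inclusions of cores), so $f$ induces a homeomorphism of characteristic pairs. By Theorem \ref{graphClassification} there is a proper homotopy equivalence $h\colon X\to Y$ extending it; let $\bar h\colon Y\to X$ be a proper homotopy inverse. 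Since $f=h(\bar h f)$, it suffices to prove that $\bar h f$ is a proper homotopy equivalence, and, using Lemma \ref{restriction} (and functoriality of restriction to the core), $\bar h f$ is proper, is the identity on $\partial X$, and restricts to a proper homotopy equivalence of $X_g$. So, renaming, we may assume $Y=X$, $f|_{\partial X}=\mathrm{id}$, and $f|_{X_g}\colon X_g\to X_g$ a proper homotopy equivalence. If $DX=\emptyset$ then $X=X_g$ and $f=f|_{X_g}$ is a proper homotopy equivalence; if $X$ is a tree then $f\simeq\mathrm{id}_X$ by Corollary \ref{criterion}, since any two proper lines of a tree with the same endpoints are properly homotopic. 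So assume $DX\neq\emptyset$ and $X_g\neq\emptyset$.

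Let $\phi\colon X_g\to X_g$ be a proper homotopy inverse of $f|_{X_g}$, and use Proposition \ref{extension} to extend $\phi$ to a proper map $g\colon X\to X$ with $g|_{\partial X}=\mathrm{id}$ (permissible: $\phi$ induces the identity on $\partial X_g$, the restriction of the identity on $\partial X$). Proposition \ref{extension} leaves the behavior of $g$ on each tree $S$ attached at a vertex $w\in X_g$ free once the induced map on $\partial S$ is fixed, and exploiting this is the crux. Using Proposition \ref{pasting_homotopies} to extend a proper homotopy inside $X_g$ from $\phi\circ(f|_{X_g})$ to $\mathrm{id}_{X_g}$, we may also assume $gf$ is the identity on $X_g$.

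Now fix a tree $S$ attached at $w\in X_g$. Since $S$ is a tree, the free proper homotopy class of a map $S\to X$ is determined by the induced map on $\partial S$; as $gf$ is the identity on $\partial S$, the restriction $(gf)|_S$ is thus freely properly homotopic to the inclusion $\iota_S$, hence agrees with $\iota_S$ up to a proper homotopy rel $w$ after multiplication by a winding loop $c_S\in\pi_1(X_g,w)$. Tracing how $c_S$ is assembled — from the winding of $f$ on $S$, from $\phi$, and from the homotopy that straightened $gf$ over $X_g$ — shows that varying the proper homotopy class rel $w$ of the extension $g|_S$, which ranges over a torsor under $\pi_1(X_g,w)$, changes $c_S$ by an arbitrary element of $\pi_1(X_g,w)$; so one may choose $g|_S$ with $c_S$ trivial, and this affects neither $g|_{\partial X}=\mathrm{id}$ nor $g|_{X_g}=\phi$. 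Make this choice for every attached tree. Then $gf$ is the identity on $X_g$ and $(gf)|_S\simeq\iota_S$ rel $w$ for every attached $S$.

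A proper line in $X$ converging in each direction to an end of $DX$ is properly homotopic to a concatenation $\bar r_-*\mu*r_+$, where $r_\pm$ are rays of attached trees issuing from their roots $w_\pm\in X_g$ and $\mu$ is a path in $X_g$; $gf$ carries each of the three pieces to a properly homotopic path rel endpoints (the rays by the previous paragraph, $\mu$ because $gf$ is the identity on $X_g$), so $gf$ preserves the proper homotopy class of every such line, and trivially the homotopy class of every closed curve. By Corollary \ref{criterion}, $gf\simeq\mathrm{id}_X$. Finally $g$ is proper, is the identity on $\partial X$, and restricts on $X_g$ to the proper homotopy equivalence $\phi$, so the same argument yields $g'\colon X\to X$ with $g'g\simeq\mathrm{id}_X$; then $f\simeq(g'g)f=g'(gf)\simeq g'$ and $fg\simeq g'g\simeq\mathrm{id}_X$, so $g$ is a two-sided proper homotopy inverse of $f$ and we are done. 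The step that demands real work is the winding computation of the third paragraph — especially the claim that any prescribed winding loop $c_S$ is realized by some choice of extension, so that it can be cancelled; the case analysis buried in ``is properly homotopic to a concatenation $\bar r_-*\mu*r_+$'' (a line inside a single attached tree, or one whose two ends coincide) is routine.
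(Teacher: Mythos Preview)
Your overall strategy is sound, and the reduction to $Y=X$, $f|_{\partial X}=\mathrm{id}$, $f|_{X_g}$ a proper homotopy equivalence is correct. The winding analysis is also right: writing $f(r_e)\simeq\mu_e*r_e$ rel endpoints with $\mu_e\subset X_g$, one sees that $c_S$ is determined by $\bar\kappa*\phi(\mu_e)*g(r_e)$, so changing $g|_S$ by $\gamma\in\pi_1(X,\phi(w))$ changes $c_S$ by a conjugate of $\gamma$ and leaves every $c_{S'}$ with $S'\neq S$ untouched (since $r_{e'}\subset S'$ and $\mu_{e'}\subset X_g$ are unaffected).

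There is, however, a genuine gap. After you ``make this choice for every attached tree'' you tacitly assume the resulting map $g$ is still proper. Proposition \ref{extension} does not say every independent choice of $g|_S$ glues to a proper map; it produces one particular extension. When $X$ has infinitely many attached trees with roots $w_S\to\infty$, you must show that the correcting loops $\gamma_S$ can be represented by loops that also go to infinity, else infinitely many of them meet a fixed compact set and the modified $g$ is not proper. This is exactly the content of condition (R2) in the paper's definition of $\cR$: the check after Definition \ref{defOfPhi} that $\Phi_T(gf)\in\cR$ is precisely the statement that the windings $c_S$ escape to infinity with $S$, and the surjectivity half of Theorem \ref{kernel} is precisely the construction of a \emph{proper} map realizing a prescribed element of $\cR$. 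You are essentially re-deriving those two facts in situ but omit the crucial properness verification.

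The paper's proof avoids this by not insisting on $gf\simeq\mathrm{id}$: it takes any $g$ from Proposition \ref{extension} and invokes Theorem \ref{kernel} (via $\Phi_T(gf)\in\cR$ and the isomorphism $\cK\cong\cR$) to conclude that $gf$ and $fg$ are proper homotopy equivalences, hence so is $f$. Your route can be completed, but the missing step is exactly the (R2) verification and is not routine.
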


\begin{proof}
  Using Proposition \ref{extension} we have a proper map $g:Y\to X$ so
  that the restriction $Y_g\to X_g$ is the homotopy inverse to $f:X_g\to Y_g$
  and so that $fg$ and $gf$ are identity on the
  boundaries. But then both are proper homotopy equivalences by
  Theorem \ref{kernel}, and thus both $f$ and $g$ are as well.
\end{proof}

Recall that $\cR \cong Ker(P\maps(X) \to P\maps(X_g^*))$.

\begin{cor}
  If $\alpha_0\in DX$ and
  if $\pi_1(X,\alpha_0)$ is nonabelian then
  $$K=Ker(P\maps(X)\to P\maps(X_g))$$ 
  fits in an exact sequence
  $$1\to \cR\to K\to \pi_1(X,\alpha_0)\to 1$$
  If $X_g=S^1$ then $Ker(P\maps(X)\to P\maps(X_g)=\Z/2\Z)$ is isomorphic to
  $\cR$. 
\end{cor}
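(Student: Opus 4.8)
The plan is to realize $K$ as an extension $1\to\cR\to K\to Q\to 1$ where $Q$ is a group that Lemma~\ref{Ystar} lets us compute. First I would record that restriction is transitive: the homomorphism $\pi'\colon P\maps(X)\to P\maps(X_g)$ equals the composite of $\pi\colon P\maps(X)\to P\maps(X_g^*)$ with $r\colon P\maps(X_g^*)\to P\maps(X_g)$. This follows from Lemma~\ref{restriction}: given $f\in P\maps(X)$, homotope it so that it preserves $X_g^*$, then homotope further within $X_g^*$ so that it also preserves $X_g$, and compare with a representative directly adapted to $X_g$ using well-definedness of the restriction. Consequently $\cK:=\ker\pi$ is a normal subgroup of $K:=\ker\pi'$, Theorem~\ref{kernel} provides an isomorphism $\cK\cong\cR$ (namely $\Phi_T$), and $\pi$ induces an injection of $K/\cK$ into $\ker r=:K_0$.

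Next I would identify $K_0=\ker(P\maps(X_g^*)\to P\maps(X_g))$. Since $X_g$ is the core of $X_g^*$ and $X_g\hookrightarrow X_g^*$ is a homotopy equivalence, $K_0$ coincides with $\ker(\Psi_P\colon P\maps(X_g^*)\to Out(\pi_1(X_g^*)))$: if a class restricts to the identity on $X_g$ then it induces $\mathrm{id}$ on $\pi_1(X_g^*)$; conversely, if $\phi_*=\mathrm{id}$ in $Out(\pi_1(X_g^*))$ then, after homotoping $\phi$ to preserve $X_g$, the restriction $\phi|_{X_g}$ induces the identity outer automorphism of $\pi_1(X_g)$ and hence is properly homotopic to $\mathrm{id}$ by Theorem~\ref{id} — the one place we use that $X_g$ is a core graph. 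Lemma~\ref{Ystar} then gives $K_0\cong\pi_1(X_g^*)=\pi_1(X)$ when $\pi_1(X)$ is nonabelian and $K_0=1$ when $X_g=S^1$ (in which case also $P\maps(X_g)=\maps(S^1)=\Z/2\Z$).

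The heart of the proof is to show $\pi(K)=K_0$. Since $\pi^{-1}(K_0)\subseteq K$ (because $r\pi=\pi'$), it suffices to show that every element of $K_0$ — indeed of $P\maps(X_g^*)$ — lifts to $P\maps(X)$; this surjectivity is the step I expect to take real work. Given $\phi\in P\maps(X_g^*)$, represent it by a proper map $f\colon X_g^*\to X_g^*$ with $f(X_g)\subseteq X_g$, and extend $f$ to $F\colon X\to X$ by keeping $F=f$ on $X_g^*$ and, on each tree $S$ that is the closure of a component of $X\smallsetminus X_g^*$, letting $F|_S$ be a proper map that agrees with $f$ at the root of $S$ and induces the identity on $\partial S$ (such maps are produced by the construction in the proof of Proposition~\ref{extension}). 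The glued map $F$ is proper — the verification, as in Proposition~\ref{extension}, uses that $f$ is proper and fixes the ends, and this is the only genuinely technical point — and $F$ induces the identity on $\partial X$. Since $F|_{X_g}=\phi|_{X_g}\colon X_g\to X_g$ is a proper homotopy equivalence and $F$ induces a homeomorphism of $\partial X$, Corollary~\ref{criterion2} shows $F$ is a proper homotopy equivalence, so $[F]\in P\maps(X)$ and $\pi([F])=[F|_{X_g^*}]=\phi$. Hence $\pi(K)=K_0$, so $K/\cR\cong K_0$, and inserting the two computations of $K_0$ gives the exact sequence $1\to\cR\to K\to\pi_1(X)\to 1$ when $\pi_1(X)$ is nonabelian and the isomorphism $K\cong\cR$ when $X_g=S^1$.
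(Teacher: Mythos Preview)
Your proof is correct and follows essentially the same approach as the paper: both arguments combine Theorem~\ref{kernel} (identifying $\ker(P\maps(X)\to P\maps(X_g^*))$ with $\cR$) and Lemma~\ref{Ystar} (identifying $\ker(P\maps(X_g^*)\to P\maps(X_g))$ with $\pi_1(X)$, via Theorem~\ref{id}) to obtain the extension. The paper packages this as a commutative diagram and a one-line diagram chase, while you unwind the chase linearly; your explicit verification of the surjectivity of $P\maps(X)\to P\maps(X_g^*)$ via Proposition~\ref{extension} and Corollary~\ref{criterion2} fills in a step the paper simply asserts (it cites Theorem~\ref{kernel} for exactness of the horizontal sequence, but that theorem only identifies the kernel).
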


\begin{proof}
  We focus on the first statement. The horizontal and vertical
  sequences in the commutative diagram below are exact by Theorem
  \ref{kernel} and Lemma \ref{Ystar}. The diagonal sequence is exact
  by the definition of $K$. The construction of the red arrows and the
  exactness of the sequence are a diagram chase.
\vskip 0.5cm

  \begin{center}
  \begin{tikzcd}
    1
    \arrow{dr} & & & 1\arrow{d} & \\
    & K \arrow{dr}
    \arrow[red]{rr} & & \pi_1(X,\alpha_0) \arrow{d}\arrow[red]{r} & 1\\
    1\arrow{r} & \cR \arrow{r}\arrow[red]{u} & P\maps(X) \arrow{r}\arrow{dr} &
    P\maps(X_g^*) \arrow{r}\arrow{d} & 1\\
    & 1\arrow[red]{u} & & P\maps(X_g) \arrow{d} \arrow{dr}\\
    & & & 1 & 1
    \end{tikzcd}
\end{center}
\end{proof}

When $DX$ is compact there is a more refined statement. Note that in
that case condition (R2) in the definition of $\cR$ is vacuous.

\begin{cor}\label{semidirect}
  If $DX$ is compact and nonempty then
  $$P\maps(X)\cong \cR\rtimes P\maps(X_g^*)$$
  where $P\maps(X_g^*)$ acts on $\cR$ by $g\cdot h(\beta)=g_*(h(\beta))$,
  where $g_*:\pi_1(X_g^*,\al_0)\to \pi_1(X_g^*,\al_0)$ is the homomorphism
  induced by $g$.
\end{cor}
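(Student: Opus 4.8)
\emph{Approach.} Since $DX$ is compact, condition (R2) in the definition of $\cR$ is vacuous and, more importantly, every $f\in P\maps(X)$ satisfies hypotheses (i) and (ii) of Definition~\ref{defOfPhi}. Thus $\Phi_T$ is defined on all of $P\maps(X)$, takes values in $\cR$ (the verification following Definition~\ref{defOfPhi} applies verbatim), and is invariant under proper homotopy (because $\partial f=\mathrm{id}$ and $f(\bar\ell_\beta)$ depends only on the proper homotopy class of $f$). Together with the restriction homomorphism $r\colon P\maps(X)\to P\maps(X_g^*)$ it gives a map
$$\Psi=(\Phi_T,\, r)\colon P\maps(X)\longrightarrow \cR\rtimes P\maps(X_g^*),$$
into the semidirect product with the action specified in the statement, and the plan is to show $\Psi$ is an isomorphism of groups, identifying $\pi_1(X_g^*,\al_0)$ with $\pi_1(X,\al_0)$ via the homotopy equivalence $X_g^*\hookrightarrow X$ throughout.

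\emph{$\Psi$ is a homomorphism.} Read without the assumption $f_*=\mathrm{id}$, the computation in the proof of Theorem~\ref{kernel} gives the crossed-homomorphism identity
$$\Phi_T(fg)(\beta)=\Phi_T(f)(\beta)\cdot f_*\big(\Phi_T(g)(\beta)\big)\qquad (f,g\in P\maps(X),\ \beta\in DX).$$
Since $r(f)=f|_{X_g^*}$ and $X_g^*\hookrightarrow X$ is a $\pi_1$-isomorphism, $f_*$ coincides with $r(f)_*$ under our identification, so $f_*\circ\Phi_T(g)$ is exactly the action of $r(f)$ on $\Phi_T(g)$ in the semidirect product. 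As $r$ is a homomorphism, this gives $\Psi(fg)=\big(\Phi_T(f)\cdot(r(f)\cdot\Phi_T(g)),\, r(f)r(g)\big)=\Psi(f)\Psi(g)$.

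\emph{$\Psi$ is bijective.} If $\Psi(f)=(1,1)$ then $\Phi_T(f)=1$; since $f$ is pure, Theorem~\ref{kernel} forces $f\simeq\mathrm{id}$, so $\Psi$ is injective. For surjectivity, fix $(h,g)$. The restriction $r$ is onto (this is part of the exact sequence in the preceding corollary; explicitly, one extends $g|_{X_g}$ over the trees attached to the core using Proposition~\ref{extension}, upgrades it to a proper homotopy equivalence by Corollary~\ref{criterion2}, and kills the residual $\ker(P\maps(X_g^*)\to P\maps(X_g))$ — identified with $\pi_1(X_g)$ by Lemma~\ref{Ystar} — by maps on $X$ that wrap the first edge of $\rho_0$ around a loop of $X_g$ and are the identity elsewhere). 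Choose $f_0$ with $r(f_0)=g$. For $k\in\cK=\ker r$ we have $k_*=\mathrm{id}$, hence $\Phi_T(kf_0)=\Phi_T(k)\cdot\Phi_T(f_0)$; since $\Phi_T|_{\cK}\colon\cK\to\cR$ is onto by Theorem~\ref{kernel}, pick $k$ with $\Phi_T(kf_0)=h$. Then $\Psi(kf_0)=(h,g)$.

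\emph{Main obstacle.} The only place that needs real work is the surjectivity of $r$ — extending an element of $P\maps(X_g^*)$ over the trees attached to the core while keeping it a \emph{proper homotopy equivalence} (not just a proper map) and exhibiting the leftover wrapping ambiguity inside the image of $r$. Everything else is formal: once $\Phi_T$ is recognized as a well-defined $\cR$-valued crossed homomorphism on all of $P\maps(X)$ — which is exactly what compactness of $DX$ provides — checking that $(\Phi_T,r)$ is an isomorphism reproduces the computation already present in the proof of Theorem~\ref{kernel}, combined with the two surjectivity statements above.
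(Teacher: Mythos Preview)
Your proof is correct and follows essentially the same route as the paper: define $\Psi=(\Phi_T,r)$ on all of $P\maps(X)$ (legitimate precisely because $DX$ compact makes hypothesis (ii) of Definition~\ref{defOfPhi} automatic), verify the crossed-homomorphism identity from the displayed computation in Theorem~\ref{kernel}, and deduce bijectivity from Theorem~\ref{kernel} together with surjectivity of the restriction $r$. The paper's proof is the same argument compressed to three sentences; you have simply unpacked the details, especially the surjectivity of $r$, which the paper records only as an ``observation''.

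One small wording issue in your injectivity step: you write ``since $f$ is pure, Theorem~\ref{kernel} forces $f\simeq\mathrm{id}$'', but Theorem~\ref{kernel} needs $f_*=\mathrm{id}$ on $\pi_1(X,\al_0)$, not merely purity. Of course $\Psi(f)=(1,1)$ also gives $r(f)=1$, hence $f|_{X_g^*}\simeq\mathrm{id}$ and so $f_*=\mathrm{id}$; just say that before invoking the theorem.
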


\begin{proof}
  By Definition \ref{defOfPhi}, we have $\Phi_T:P\maps(X)\to\cR$, and let
  $R:P\maps(X)\to P\maps(X_g^*)$ be the restriction. Thus we have a
  function
  $$(\Phi_T\times R):P\maps(X)\to \cR\rtimes P\maps(X_g^*)$$
  That this is a homomorphism follows from the displayed calculation
  in the proof of Theorem \ref{kernel}. That the map is 1-1 and onto
  follows from Theorem \ref{kernel} plus the observation that $R$ is
  onto.
\end{proof}

When $DX$ is not compact, $\Phi_T$ may not be a well-defined function
to $\cR$ since (R2) may fail.

\section{Topology}\label{topology}

It
takes a bit of care to define the topology on $\maps(X)$.
Let $\hat X$ denote the Freudenthal compactification $X\cup \partial
X$ by the ends of $X$. If a map $f:X\to X$ is a proper homotopy
equivalence then it induces an isomorphism of $\pi_1(X)$ and it
extends to a continuous map $\hat X\to\hat X$ that restricts to a
homeomorphism of $\partial X$. However, the converse of this statement
is false.

\begin{example}\label{noninvertible}
  Let $X$ be the ray $[0,\infty)$ with the circle $x_n$ attached at
    $n$. Then there is a proper map whose action on $\pi_1$ is
    $x_0\mapsto x_0$ and $x_n\mapsto x_n x_{n-1}$ for $n>0$. The
    inverse sends every $x_n$ to a word that involves $x_0$ so it
    cannot be realized by a proper map. 
\end{example}

To circumvent this pathology, we will consider the space of pairs of
maps which are proper homotopy inverses of each other. This way the
inverse is ``built in'', cf. the proof that inversion is continuous,
Proposition \ref{group}. More precisely, let $(\hat X\to\hat X)$ be
the space of all continuous maps $\hat X\to \hat X$ equipped with
compact open topology. If we fix a metric $d$ on $\hat X$ then $(\hat
X\to\hat X)$ has the associated $\sup$ metric which we also denote by
$d$. This
metric is also complete, and composition is continuous, and $(\hat
X\to\hat X)$ is separable (see e.g. \cite[Theorem 4.19]{Kechris_sep}).

Next, we look at the space $PH(X)\subset (\hat X\to\hat X)^2$ consisting
of pairs $(\hat f,\hat g)$ such that:

\noindent
{\it $\hat f,\hat g$ are extensions to $\hat X$ of proper homotopy
equivalences $f,g:X\to X$ that are each other's inverses.}

In particular, $\hat f,\hat g$ are homeomorphisms when restricted to
$\partial X$ and they are each other's inverses. We put the product
topology on $(\hat X\to\hat X)^2$ and the subspace
topology on $PH(X)$.

Now define the function $\pi:PH(X)\to \maps(X)$ by
$$\pi(\hat f,\hat g)=[f]$$ where $[f]$ is the proper homotopy class of
the restriction of $\hat f$ to $X$. This function is surjective and
we put the quotient topology on $\maps(X)$.

\begin{prop}
  The quotient map $\pi$ is an open map.
\end{prop}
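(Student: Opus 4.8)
The plan is to use the defining property of the quotient topology: $\pi$ is open precisely when, for every open $U\subseteq PH(X)$, the saturation $\pi^{-1}(\pi(U))$ is open in $PH(X)$. (There is no shortcut through a group structure: $PH(X)$ is only a monoid under $(\hat f,\hat g)*(\hat f',\hat g')=(\hat f\hat f',\hat g'\hat g)$ --- the composite $\hat g\hat f$ is merely properly homotopic to the identity, not equal to it --- so the fibers of $\pi$ are whole proper homotopy classes and cannot be moved by ``translations''.) So fix an open $U$ and a point $(\hat f_0,\hat g_0)\in\pi^{-1}(\pi(U))$, and look for a ball around it inside $\pi^{-1}(\pi(U))$. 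By definition of $\pi$ there is $(\hat f_1,\hat g_1)\in U$ with $f_0$ properly homotopic to $f_1$; then $g_0$ is properly homotopic to $g_1$ as well, since proper homotopy inverses are unique up to proper homotopy. Fix proper homotopies $H\colon f_0\simeq f_1$ and $H'\colon g_0\simeq g_1$, and fix $\epsilon>0$ with $B_\epsilon(\hat f_1,\hat g_1)\cap PH(X)\subseteq U$.

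The heart of the matter is that the compact--open topology does not see proper homotopy --- uniformly close proper maps need not be properly homotopic --- but that this discrepancy lives only near the ends and can be controlled from two sides. On the one hand, over any compact subcomplex $K\subseteq X$ the metric $d$ is uniformly equivalent to the simplicial path metric, so if $\hat f$ is close enough to $\hat f_0$ then at each point of $K$ the values $f(x),f_0(x)$ lie in a common contractible ball of the graph, and the ``geodesic'' homotopy (tapered on a collar of $Fr(K)$ by a bump function, so as to be rel frontier) is a proper homotopy from $f$ to a map equal to $f_0$ on $K$ and equal to $f$ off a slightly larger $K^{+}$. On the other hand, the extension $\hat H\colon\hat X\times I\to\hat X$ of a proper homotopy is stationary on $\partial X$ --- a path in the totally disconnected space $\partial X$ is constant, and $\hat H$ carries $\partial X$ into $\partial X$ --- so, being continuous on the compact space $\hat X\times I$, $\hat H$ is nearly stationary near $\partial X$; hence $d(\hat f_0,\hat f_1)<\epsilon/3$ off some compact set, and likewise $d(\hat g_0,\hat g_1)<\epsilon/3$ off some compact set.

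Combining these, pick a large compact subcomplex $K$ (containing the compacta just produced, and large enough that a thin collar of $Fr(K)$ is also far out) and then a small $\delta\in(0,\epsilon/3)$, and claim $B_\delta(\hat f_0,\hat g_0)\cap PH(X)\subseteq\pi^{-1}(\pi(U))$. Indeed, given $(\hat f,\hat g)$ in this ball: first homotope $f$ to $\tilde f$ with $\tilde f=f_0$ on $K$ and $\tilde f=f$ off $K^{+}$ (first point, plus the Homotopy Extension Theorem, Proposition \ref{pasting_homotopies}, the homotopy being supported in the compact set $K^{+}$); then, using $H$ restricted to $K$ --- again tapered on a collar --- homotope $\tilde f$ to $f'$ with $f'=f_1$ on a slightly smaller $K^{-}$ and $f'=\tilde f$ off $K$ (Proposition \ref{pasting_homotopies} again). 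Then $f'$ is properly homotopic to $f$, and $d(\hat f',\hat f_1)<\epsilon$: they agree on $K^{-}$, while off $K^{-}$ the maps $f,f_0,f_1$ and the two interpolations are clustered within $\epsilon$ by the second point together with $\delta<\epsilon/3$. The parallel construction with $H'$ yields $g'$ properly homotopic to $g$ with $d(\hat g',\hat g_1)<\epsilon$. Since $f,g$ are mutual proper homotopy inverses and composition respects proper homotopy, so are $f',g'$; thus $(\hat f',\hat g')\in B_\epsilon(\hat f_1,\hat g_1)\cap PH(X)\subseteq U$, and $\pi(\hat f,\hat g)=[f]=[f']\in\pi(U)$, so $(\hat f,\hat g)\in\pi^{-1}(\pi(U))$, as needed.

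The main obstacle is exactly the bookkeeping this requires: organizing the nested compacta $K^{-}\subseteq K\subseteq K^{+}$ and the constants $\epsilon,\delta$ in the right order, so that the single modified pair $(\hat f',\hat g')$ is simultaneously properly homotopic to $(\hat f,\hat g)$ (which forces the modifications to be by proper homotopies supported in compact sets) and $\epsilon$-close to $(\hat f_1,\hat g_1)$ (which needs the modifications to reach $f_1,g_1$ on a huge compact set while changing nothing far out). The remaining ingredients --- uniform continuity on $\hat X\times I$, the Homotopy Extension Theorem, and stability of ``mutual proper homotopy inverse'' under proper homotopy --- are routine.
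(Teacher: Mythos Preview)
Your proof is correct and follows essentially the same approach as the paper: show the saturation $\pi^{-1}\pi(U)$ is open by using the Homotopy Extension Theorem (Proposition~\ref{pasting_homotopies}) together with the fact that a proper homotopy, extended to $\hat X\times I$, is stationary on $\partial X$ and hence has small tracks outside a large finite subgraph. The only organizational difference is that you perform the modification in two explicit steps (first homotope $f$ to agree with $f_0$ on $K$, then push along $H$ to agree with $f_1$ on $K^-$), whereas the paper concatenates the two homotopies on a single large $L$ and applies Proposition~\ref{pasting_homotopies} once; the resulting estimates are the same.
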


\begin{proof}
   Let $U\subseteq PH(X)$ be open; we need to show that
   $\pi^{-1}\pi(U)\subseteq PH(X)$ is open. Let $(\hat f,\hat g)\in
   \pi^{-1}\pi(U)$. Thus we have proper homotopies $H,K$ from $f,g$ to
   $h,k$ respectively, and $(\hat h,\hat k)\in U$. Therefore there is
   $\epsilon>0$ such that if $(\hat h',\hat k')\in PH(X)$ and $d(\hat
   h,\hat h')<\epsilon$, $d(\hat
   k,\hat k')<\epsilon$ then $(\hat h',\hat k')\in U$. We now claim
   that there is $\delta>0$ such that if $(\hat f',\hat g')\in PH(X)$,
   $d(\hat f,\hat f')<\delta$, $d(\hat g,\hat g')<\delta$, then there
   are homotopies of $f',g'$ to maps $h',k'$ as above, and this will
   show that $(\hat f',\hat g')\in\pi^{-1}\pi(U)$, finishing the
   proof.

   We will prove the claim using Proposition \ref{pasting_homotopies}. Note
   that all proper homotopies between maps on $X$ extend continuously
   to $\partial X$ and are stationary on all points of $\partial
   X$. It follows that in the complement of a sufficiently large
   finite subgraph the tracks (i.e. paths traversed by points) of each
   such 
   homotopy are as small as we
   like. In addition, the edges outside a large finite subgraph are as
   small as we like.
   
   Choose a large finite subgraph $L\subset X$ and
   choose $\delta_f>0$ so that if $d(\hat f,\hat f')<\delta_f$ then we
   have a homotopy between $f'|L$ and $f|L$ whose tracks have small
   size, and we also have a homotopy between $f$ and $h$ with small
   tracks for points in $\overline{X\smallsetminus L}$. Applying Proposition
   \ref{pasting_homotopies} we get a homotopy from $f'$ to some map
   $h'$ that agrees with $h$ on $L$ and whose tracks of points outside
   $L$ are small. Thus $h'$ is close to $f$ outside $L$, which in turn
   is close to $h$ outside $L$. Thus $h'$ is close to $h$
   everywhere. In a similar way we find $\delta_g>0$ and a homotopy
   from $g'$ to $k'$. Then we set $\delta=\min\{\delta_f,\delta_g\}$.
   \end{proof}

\begin{cor}\label{phe}
  Let $PHE(X)\subset (\hat X\to\hat X)$ be the subspace of maps $\hat
  f$ that are extensions to $\hat X$ of proper homotopy equivalences
  $f \from X\to X$. Then the map $q\from PHE(X)\to \maps(X)$ that to $\hat f$ assigns
  $[f]$ is open. Thus alternatively we could use this map to define
  the quotient topology on $\maps(X)$.
\end{cor}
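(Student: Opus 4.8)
The plan is to factor the map $q$ through the quotient map $\pi$ and transfer openness using the fact that $\pi$ is already known to be open.

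First I would set up the relevant maps. Recall $\pi \colon PH(X) \to \maps(X)$ sends $(\hat f, \hat g) \mapsto [f]$, and we want $q \colon PHE(X) \to \maps(X)$ sending $\hat f \mapsto [f]$. There is an obvious forgetful map $p \colon PH(X) \to PHE(X)$ given by $p(\hat f, \hat g) = \hat f$, and by construction $q \circ p = \pi$. So for any open set $V \subseteq PHE(X)$, the preimage under $q$ satisfies $q^{-1}q(V) \supseteq V$, and I want to show it is open. The key observation is that $p$ is a continuous, open, \emph{surjective} map: it is surjective because every proper homotopy equivalence $f$ has a proper homotopy inverse $g$ (this is the definition of proper homotopy equivalence), so every $\hat f \in PHE(X)$ is $p(\hat f, \hat g)$ for some $\hat g$; and it is open because $PH(X)$ carries the subspace topology from $(\hat X \to \hat X)^2$ and $p$ is just (the restriction of) the projection onto the first coordinate, and coordinate projections of products are open.

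Next I would run the standard diagram chase. Given $V \subseteq PHE(X)$ open, I want $q(V)$ open in $\maps(X)$. Since $\pi$ is an open quotient map, it suffices to show $\pi^{-1}(q(V))$ is open in $PH(X)$. But $\pi^{-1}(q(V)) = (q \circ p)^{-1}(q(V)) = p^{-1}(q^{-1}(q(V)))$. Now $q^{-1}(q(V))$ consists of all $\hat f'$ such that $[f']$ equals $[f]$ for some $\hat f \in V$; equivalently, $p^{-1}(q^{-1}q(V)) = \pi^{-1}\pi(p^{-1}(V))$, and since $p^{-1}(V)$ is open (as $p$ is continuous) and $\pi$ is open (in particular $\pi^{-1}\pi$ of an open set is open, which is exactly what was proved in the previous proposition), we conclude $\pi^{-1}(q(V))$ is open. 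Hence $q(V)$ is open, so $q$ is an open map. Finally, to justify "alternatively we could use this map to define the quotient topology": $q$ is clearly continuous when $\maps(X)$ carries the quotient-from-$\pi$ topology (since $q \circ p = \pi$ is continuous and $p$ is a continuous surjection, or directly because proper homotopic maps that are close induce the same class), it is surjective, and it is open; a continuous open surjection is a quotient map, so the quotient topology it induces coincides with the given one.

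I do not expect a serious obstacle here — the content is entirely formal once the openness of $\pi$ is in hand. The one point that deserves a sentence of care is the surjectivity and openness of the forgetful map $p$: surjectivity is immediate from the definition of proper homotopy equivalence (an inverse exists), and openness is immediate because $PH(X) \subseteq (\hat X \to \hat X)^2$ has the subspace topology and $p$ is the restriction of the open projection $(\hat X \to \hat X)^2 \to (\hat X \to \hat X)$ to $PH(X)$, whose image is exactly $PHE(X)$ with its subspace topology. Everything else is the diagram chase above.
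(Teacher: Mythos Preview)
Your argument is correct and is essentially the paper's: factor $\pi = q \circ p$ through the first-coordinate projection $p \colon PH(X) \to PHE(X)$ and use that $p$ is a continuous surjection together with the openness of $\pi$ to deduce that $q$ is open (indeed $q(V)=\pi(p^{-1}(V))$).

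Two small caveats. First, your claim that $p$ is open is never used in your diagram chase, and your justification for it is not valid: the restriction of an open map to a subspace (with the subspace topologies on source and image) need not be open. Simply drop that claim; only continuity and surjectivity of $p$ are needed. Second, your argument that $q$ is continuous (``since $q\circ p=\pi$ is continuous and $p$ is a continuous surjection'') is not a valid inference in general---one needs $p$ to be a quotient map, not just a continuous surjection. The paper's own proof does not address continuity of $q$ either, so this is not a defect relative to the source, but you should not present that step as automatic.
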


\begin{proof}
  The projection $(\hat X\to\hat X)^2\to(\hat X\to\hat X)$ to the
  first coordinate restricts
  to a continuous map $PH(X)\to PHE(X)$. The composition with
  $q\from PHE(X)\to \maps(X)$ is open by the proposition, so $q$ is open.
\end{proof}

\begin{prop}\label{group}
  With this topology $\maps(X)$ is $T_1$ and it is a topological group.
\end{prop}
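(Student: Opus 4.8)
The plan is to verify the three required properties in turn: that points are closed ($T_1$), that multiplication $\maps(X)\times\maps(X)\to\maps(X)$ is continuous, and that inversion $\maps(X)\to\maps(X)$ is continuous. Throughout I will exploit Corollary \ref{phe}, which tells us the quotient map $q\colon PHE(X)\to\maps(X)$ is open, so a set in $\maps(X)$ is open iff its preimage in $PHE(X)$ is; combined with the fact that $PHE(X)\subset(\hat X\to\hat X)$ carries the restriction of the sup-metric $d$, this reduces every continuity claim to a statement about $\epsilon$-$\delta$ behavior of composition of continuous self-maps of the compact metric space $\hat X$, plus the observation (already used in the proof that $\pi$ is open) that proper homotopies are stationary on $\p X$ and have arbitrarily small tracks outside a large finite subgraph.

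For $T_1$: a single proper homotopy class $[f]$ has preimage in $PHE(X)$ equal to the set of all $\hat f'$ extending maps properly homotopic to $f$. I would show its complement is open in $PHE(X)$. Suppose $\hat g'$ extends a proper homotopy equivalence $g$ not properly homotopic to $f$; I want a $\delta$-ball around $\hat g'$ (intersected with $PHE(X)$) disjoint from $q^{-1}([f])$. Using the same mechanism as in the "open map" proof — choosing a large finite subgraph $L$ and noting edges and homotopy tracks outside $L$ are tiny — any $\hat g''$ with $d(\hat g',\hat g'')<\delta$ is properly homotopic to $g$ (homotope on $L$, then extend by Proposition \ref{pasting_homotopies}), so if $\hat g''$ were in $q^{-1}([f])$ we'd get $f\simeq g$, a contradiction. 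Hence $q^{-1}([f])$ is closed and, since $q$ is a quotient map, $[f]$ is closed in $\maps(X)$.

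For continuity of multiplication: lift to $PHE(X)$. Given $[f],[g]$ with product $[fg]$ (note $\widehat{fg}=\hat f\circ\hat g$), and given a basic open neighborhood of $[fg]$, pull it back to an open set in $PHE(X)$ containing $\widehat{fg}$; it contains a $\delta$-ball. I must find $\delta_1,\delta_2$ so that $d(\hat f,\hat f')<\delta_1$ and $d(\hat g,\hat g')<\delta_2$ force $\hat f'\circ\hat g'$ into that ball after a proper homotopy. Here the point is that composition of continuous maps on the \emph{compact} space $\hat X$ is jointly continuous in the sup-metric: $\hat f'\hat g'$ is sup-close to $\hat f\hat g$ once $\hat g'$ is close to $\hat g$ (giving $\hat f\hat g'$ close to $\hat f\hat g$ by uniform continuity of $\hat f$) and $\hat f'$ is close to $\hat f$ (giving $\hat f'\hat g'$ close to $\hat f\hat g'$ directly). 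Combining with the finite-subgraph trimming argument as above promotes "sup-close" to "sup-close after a proper homotopy," landing in the chosen $\delta$-ball. For continuity of inversion: this is where the built-in inverse in $PH(X)$ earns its keep — one works instead with $q$ replaced by the map $\pi\colon PH(X)\to\maps(X)$ of Proposition (the open map proposition) and observes that inversion on $\maps(X)$ lifts to the coordinate swap $(\hat f,\hat g)\mapsto(\hat g,\hat f)$ on $PH(X)$, which is manifestly a homeomorphism; since the quotient maps are open, continuity of inversion follows by a diagram chase. I expect the \emph{main obstacle} to be bookkeeping rather than conceptual: precisely packaging the "close after a proper homotopy" lemma (essentially extracted from the proof that $\pi$ is open) into a reusable statement, and being careful that the homotopies produced by Proposition \ref{pasting_homotopies} on the two factors can be composed without the tracks of the outer map blowing up the image of the inner homotopy — this is handled exactly by the "moreover" clause of Proposition \ref{pasting_homotopies}, which lets us keep homotopy tracks away from any prescribed compact set.
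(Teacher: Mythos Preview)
Your treatment of multiplication and inversion is essentially the paper's argument (the paper uses $PH(X)$ throughout rather than switching to $PHE(X)$, but via Corollary \ref{phe} the two are interchangeable, and in both cases the point is that composition on $(\hat X\to\hat X)$ is continuous and the quotient map is open). The extra sentence about ``finite-subgraph trimming'' in the multiplication step is not needed: once $\hat f'\hat g'$ lands in the $\delta$-ball inside $q^{-1}(V)$ you are already done.

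There is, however, a genuine gap in your $T_1$ argument. You assert that for small enough $\delta$, any $\hat g''$ with $d(\hat g',\hat g'')<\delta$ is properly homotopic to $g'$, invoking ``the same mechanism as in the open-map proof.'' That mechanism does \emph{not} prove this, and in fact the claim is false. Take $X$ to be the ray $[0,\infty)$ with a circle $x_n$ attached at each integer $n$ (as in Example \ref{noninvertible}). For each $N$ let $g_N$ be the identity on the ray and on $x_n$ for $n<N$, but send $x_n\mapsto x_n^{-1}$ for $n\ge N$. Since the circles $x_n$ have diameter tending to $0$ in $\hat X$, we have $\hat g_N\to \widehat{\mathrm{id}}$ in the sup-metric; yet no $g_N$ is properly homotopic to the identity, because $g_N$ does not preserve the conjugacy class of $x_N$ (and $x_N$ is not conjugate to $x_N^{-1}$ in a free group). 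So arbitrarily sup-close proper homotopy equivalences need not be properly homotopic, and your argument that $q^{-1}([f])$ is closed breaks down. The open-map proof you cite only shows that a nearby $f'$ can be properly homotoped to something \emph{close to} a given target $h$; it never produces a proper homotopy from $f'$ all the way to $f$.

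The paper closes this gap by a different route: it suffices to show $\{[\mathrm{id}]\}$ is closed, i.e.\ that $\{\hat f\in PHE(X): f\simeq \mathrm{id}\ \text{properly}\}$ is closed. By Corollary \ref{criterion} this set is exactly the set of $\hat f$ that preserve every oriented loop and every oriented proper line joining points of $DX$; each such preservation condition is visibly closed in the sup-metric, and an intersection of closed sets is closed.
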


\begin{proof}
  We first show $\maps(X)$ is a topological group. That composition is
  continuous follows from the fact that the map $PH(X)\times PH(X)\to
  PH(X)$ defined by $((\hat f,\hat g),(\hat f',\hat g'))\mapsto (\hat
  f\hat f',\hat g'\hat g)$ is continuous, being the restriction of the
  analogous map $(\hat X\to\hat X)^4\to (\hat X\to\hat X)^2$.
  \vskip 1cm
\begin{center}
  \begin{tikzcd}
    PH(X)\times PH(X) \arrow{d}{\pi\times\pi} \arrow[r] &
    PH(X)\arrow{d}{\pi} \\
    \maps(X)\times \maps(X)\arrow[r] & \maps(X)
  \end{tikzcd}
  \vskip 1cm
\end{center}
  Product of open maps is open so both vertical arrows are quotient
  maps. Continuity of the bottom horizontal map now follows.

  For the inverse we consider the map $PH(X)\to PH(X)$, $(\hat f,\hat
  g)\mapsto (\hat g,\hat f)$ and the argument is similar.

  To prove that $\maps(X)$ is $T_1$ it suffices to show that the
  identity point is closed, since $\maps(X)$ is a topological
  group. This amounts to showing that the space of $\hat f:\hat
  X\to\hat X$ so that $f$ is properly homotopic to the identity is a
  closed set. This follows from Corollary \ref{criterion} since
  preserving a loop or a line is a closed condition.
\end{proof}

\begin{cor}\label{topiso}
  Let $X,Y$ be two connected locally finite graphs that are proper
  homotopy equivalent. Then $\maps(X)$ and $\maps(Y)$ are isomorphic as
  topological groups.
\end{cor}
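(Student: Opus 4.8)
The plan is to verify that the algebraic isomorphism $\maps(X)\to\maps(Y)$ mentioned in the introduction, $[f]\mapsto[hfh']$, is a homeomorphism, where $h\colon X\to Y$ is a fixed proper homotopy equivalence with proper homotopy inverse $h'\colon Y\to X$. I would work with the description of the topology from Corollary \ref{phe}: the topology on $\maps(X)$ is the quotient topology for $q_X\colon PHE(X)\to\maps(X)$, and likewise for $Y$.

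First, fix continuous extensions $\hat h\colon\hat X\to\hat Y$ and $\hat h'\colon\hat Y\to\hat X$ of $h$ and $h'$, and define $\Phi\colon(\hat X\to\hat X)\to(\hat Y\to\hat Y)$ by $\Phi(\alpha)=\hat h\,\alpha\,\hat h'$. This is continuous, being precomposition and postcomposition with fixed continuous maps of compact metrizable spaces (equivalently, by the continuity of composition already noted). I would then check that $\Phi$ carries $PHE(X)$ into $PHE(Y)$: if $\alpha=\hat f$ for a proper homotopy equivalence $f\colon X\to X$ with proper homotopy inverse $f'$, then $hfh'\colon Y\to Y$ is proper (composition of proper maps), its unique continuous extension to $\hat Y$ is $\hat h\hat f\hat h'$ (two continuous extensions of a map on the dense set $Y\subset\hat Y$ agree), and $hfh'$ is a proper homotopy equivalence with inverse $hf'h'$ because $(hfh')(hf'h')=hf(h'h)f'h'\simeq hff'h'\simeq hh'\simeq\mathrm{id}_Y$, each homotopy being proper since $I$ is compact and compositions of proper maps are proper. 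Hence $q_Y\circ\Phi\colon PHE(X)\to\maps(Y)$ is continuous.

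Next I would show $q_Y\circ\Phi$ is constant on the fibers of $q_X$. A fiber over $[f]$ consists of the $\hat f_1$ with $f_1\simeq f$ properly; if $F\colon X\times I\to X$ is such a proper homotopy, then $h\circ F\circ(h'\times\mathrm{id}_I)$ is a proper homotopy from $hf_1h'$ to $hfh'$, so $[hf_1h']=[hfh']$. Since the topology on $\maps(X)$ is the quotient topology for $q_X$, the map $q_Y\circ\Phi$ descends to a continuous $\Theta\colon\maps(X)\to\maps(Y)$ with $\Theta([f])=[hfh']$, which is a homomorphism because $[h(fg)h']=[(hfh')(hgh')]$ using $h'h\simeq\mathrm{id}_X$. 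Running the same construction with $X,Y$ and $h,h'$ interchanged gives a continuous homomorphism $\Theta'\colon\maps(Y)\to\maps(X)$, $[u]\mapsto[h'uh]$, and $\Theta'\Theta([f])=[(h'h)f(h'h)]=[f]$ and $\Theta\Theta'([u])=[u]$, using that $h'h\simeq\mathrm{id}_X$ and $hh'\simeq\mathrm{id}_Y$ properly. Hence $\Theta$ is an isomorphism of topological groups. I do not expect a serious obstacle; the only points requiring care are that extension to the end compactification commutes with composition of proper maps (immediate from uniqueness of continuous extensions) and that the homotopies obtained by composing a proper homotopy with the fixed proper maps $h,h'$ remain proper (immediate since $I$ is compact and properness is preserved under composition).
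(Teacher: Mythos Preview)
Your proposal is correct and follows essentially the same route as the paper: conjugate by the fixed extensions $\hat h,\hat h'$ at the level of $(\hat X\to\hat X)$, observe this restricts to $PHE(X)\to PHE(Y)$, and use Corollary~\ref{phe} (that $q$ is open, hence a quotient map) to descend to a continuous homomorphism, then swap the roles of $X,Y$. Your write-up supplies more of the routine verifications (fiber constancy, that $\widehat{hfh'}=\hat h\hat f\hat h'$, properness of composed homotopies) than the paper does, but the argument is the same.
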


\begin{proof}
  Let $F:X\to Y$ and $G:Y\to X$ be proper homotopy equivalences with
  $FG$ and $GF$ properly homotopic to the identity. Then $F,G$ extend
  to maps $\hat F:\hat X\to \hat Y$ and $\hat G:\hat Y\to \hat X$ and
  we have the induced maps $(\hat X\to\hat X)\to (\hat Y\to \hat Y)$,
  $\hat f\mapsto \hat F\circ \hat f \circ \hat G$ and $(\hat Y\to \hat Y)\to (\hat
  X\to\hat X)$, $\hat g\mapsto \hat G \circ \hat g \circ \hat F$. These maps
  restrict to $PHE(X)\to PHE(Y)$ and $PHE(Y)\to PHE(X)$,
  which induce homomorphisms $\maps(X)\to \maps(Y)$ and $\maps(Y)\to
  \maps(X)$. These are each other's inverses and they are both
  continuous by Corollary \ref{phe}.
\end{proof}

\subsection{Clopen subgroups}\label{clopen}
We next show that $\maps(X)$ has a countable basis consisting of clopen
sets, which are in fact cosets of subgroups. We will use this to show
that $\maps(X)$ is homeomorphic to $\Z^\infty:=\prod_1^\infty \Z$, i.e. the irrationals,
and is hence a totally disconnected Polish group.

We define clopen subgroups of $\maps(X)$, analogs of pointwise stabilizers of
compact subsurfaces in big mapping class groups. Let $K\subset X$ be a
finite subgraph and define
$U_K$ to be the set of equivalence classes $[f]\in \maps(X)$ with a representative such that:
\begin{enumerate}[(i)]
\item $f=id$ on $K$,
\item $f$ preserves each complementary component of $K$,
  \item there is a representative $g$ of $[f]^{-1}$ that also
    satisfies (i) and (ii),
    \item there are proper homotopies $gf\simeq 1$ and $fg\simeq 1$
      that are stationary on $K$ and preserve complementary components
      of $K$.
\end{enumerate}

\begin{lemma}\label{UkOpen} $U_K$ is open.
\end{lemma}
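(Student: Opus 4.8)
The plan is to show that $U_K$ contains an open neighborhood of each of its points in $\maps(X)$, using the open quotient map $q\from PHE(X)\to\maps(X)$ from Corollary \ref{phe}. Given $[f]\in U_K$ with a representative $f$ satisfying (i)--(iv) together with the inverse representative $g$, I would produce a $\delta>0$ so that any $\hat f'\in PHE(X)$ with $d(\hat f,\hat f')<\delta$ satisfies $q(\hat f')\in U_K$; since $q$ is open this gives that $U_K$ is a neighborhood of $[f]$, and as $[f]$ was arbitrary in $U_K$, openness follows. The key mechanism is the ``moreover'' clause of the Homotopy Extension Theorem (Proposition \ref{pasting_homotopies}): if $\hat f'$ is $\delta$-close to $\hat f$ on the compact set $\hat X$, then $f'$ and $f$ are close on $K$, and in the complement of a large finite subgraph the edges and homotopy tracks are tiny, so one can apply Proposition \ref{pasting_homotopies} to homotope $f'$ (rel a neighborhood of $K$, or after first straightening $f'$ to agree with $f$ on $K$) to a map $h'$ that equals $f=id$ on $K$, preserves complementary components of $K$, and stays close to $f'$ outside $K$.

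More precisely, first I would arrange a representative $f$ of $[f]$ that is \emph{literally} the identity on $K$ and simplicial near $K$. Choose a finite subgraph $L\supset K$ large enough that outside $L$ all edges are small and, using that proper homotopies extend to $\hat X$ stationary on $\p X$, the tracks of the homotopies $gf\simeq 1$, $fg\simeq 1$ for points outside $L$ are small. Now if $d(\hat f,\hat f')<\delta$ with $\delta$ small compared to the simplicial structure on $L$, then $f'$ maps $K$ into a small neighborhood of $K$ and maps each edge outside $K$ near its $f$-image; a short homotopy supported near $L$ (via Proposition \ref{pasting_homotopies}) produces $h'$ properly homotopic to $f'$ with $h'=id$ on $K$ and $h'$ preserving complementary components of $K$, and $h'$ close to $f$ outside $K$. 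Doing the same for the second coordinate produces $k'$ homotopic to $g'$ with the analogous properties, where $(\hat f',\hat g')\in PH(X)$ is any lift of $[f']$ (here I use Corollary \ref{phe} to know such a lift with $\hat g'$ close to $\hat g$ exists, or I work directly in $PH(X)$). Then $h'k'$ and $k'h'$ are close to $fg\simeq 1$ and $gf\simeq 1$ outside $K$ and equal $id$ on $K$; applying Proposition \ref{pasting_homotopies} once more to the small-track homotopies $fg\simeq 1$, $gf\simeq 1$ yields proper homotopies $h'k'\simeq 1$ and $k'h'\simeq 1$ that are stationary on $K$ and preserve complementary components. Hence $h'$ witnesses $[f']=[h']\in U_K$.

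The main obstacle I anticipate is bookkeeping the ``preserves complementary components of $K$'' condition through all these homotopies and estimates: a priori $f'$ might push a point of some complementary component of $K$ across into a different component, so I must choose $\delta$ small enough (relative to the fixed simplicial structure on the finite set $L$, where the components of $X\smallsetminus K$ are separated by definite distance) that this cannot happen, and then ensure the corrective homotopies of Proposition \ref{pasting_homotopies} can be taken to preserve complementary components --- this is exactly the content of the ``moreover'' clause, applied with $K$ there equal to a collar of the frontier $Fr(K)$. The other mildly delicate point is choosing a single $\delta$ that works for both $f'$ and its partner $g'$ and simultaneously makes all four homotopies in conditions (i)--(iv) go through; this is handled by taking the minimum of finitely many thresholds, exactly as in the proof that $\pi$ is open. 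Everything else is routine compactness and the openness of $q$.
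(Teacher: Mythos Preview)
Your approach is essentially the paper's, but there is one genuine wobble you should fix: the route through $q\from PHE(X)\to\maps(X)$ does not work as written. Knowing only that $\hat f'$ is $\delta$-close to $\hat f$ gives you no control whatsoever over any proper homotopy inverse $\hat g'$ of $\hat f'$, and Corollary~\ref{phe} certainly does not furnish a lift $(\hat f',\hat g')\in PH(X)$ with $\hat g'$ close to $\hat g$ --- it only says $q$ is open. So conditions (iii)--(iv) cannot be verified from the $PHE(X)$ data alone. Your parenthetical ``or I work directly in $PH(X)$'' is exactly the right repair and is what the paper does: start with the nice pair $(\hat f,\hat g)\in PH(X)$ and take $(\hat f',\hat g')$ in a $\delta$-ball around it, so both coordinates are controlled simultaneously. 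Commit to that from the outset.

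With that fix, your argument and the paper's coincide in mechanism (Proposition~\ref{pasting_homotopies} and its ``moreover'' clause, with a large buffer graph $L\supset K$). The one structural difference is that you exploit openness of $\pi$ to check only a neighborhood of the \emph{nice} preimage $(\hat f,\hat g)$, whereas the paper shows $\pi^{-1}(U_K)$ is open at an \emph{arbitrary} preimage $(\hat h,\hat k)$; this forces the paper to first homotope $h'$ to agree with $h$ on $L$ and then with $f$ on $L$, costing four applications of Proposition~\ref{pasting_homotopies} rather than your two. Your shortcut is legitimate and a little cleaner. The verification that the resulting $h'$ preserves complementary components of $K$ goes exactly as you anticipate: $h'|_L=f|_L$ lands each $C\cap L$ in $C$, the ``moreover'' clause keeps $h'(C\smallsetminus L)$ inside $X\smallsetminus K$, and connectedness of $C$ forces $h'(C)\subset C$.
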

\begin{proof}
We must show that $\pi^{-1}(U_K)$ is open.  Let $\pi(h,k) \in U_K$.
Let $f,g$ be the representatives satisfying (i)-(iv).  We then have
proper homotopies $H,H':X\times I\to X$ with $H(x,0)=h(x), H(x,1)=f(x),
H'(x,0)=k(x), H'(x,1)=g(x)$. Choose a compact subgraph $L\supset K$ such
that both $H$ and $H'$ at all times map $X\smallsetminus L$ into $X
\smallsetminus K$. Finally, choose $\epsilon>0$ such that if
$d(h',h)<\epsilon$ and $d(k',k)<\epsilon$ then $h',k'$ send
$X\smallsetminus L$ to $X\smallsetminus K$ and $h,h'$ ($k,k'$)
restricted to $L$ are homotopic by a homotopy that doesn't move points
in $Fr(L)$ into $K$.

Now use Proposition \ref{pasting_homotopies} a total of 4 times to prove
that $\pi(h',k')\in U_K$. First, we have a homotopy from $h'$ to a map $h''$
extending $h|L$, and second, we have a homotopy from $h''$ to a
map $f'$ extending $f|L$. Thus $f'|K=1$ and preparations above show
that $f'$ maps $X\smallsetminus L$ to $X\smallsetminus K$. Two more
homotopy extensions yield a similar homotopy from $k'$ to $g'$. Since
$(f',g')$ satisfies (i)-(iv) we conclude that $\pi(h',k')\in U_K$.
\end{proof}

\begin{prop}
  $U_K$ is a clopen subgroup. For every neighborhood $U$ of $1\in
  \maps(X)$ there is some $K$ so that $U_K\subseteq U$.
\end{prop}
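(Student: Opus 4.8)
The plan is to check the three assertions in order: that $U_K$ is a subgroup, hence clopen, and then that every neighborhood of $1$ contains some $U_K$.

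First, $U_K$ is a subgroup. It contains $1$, witnessed by $f=g=\operatorname{id}$, and it is closed under inverses since conditions (iii)--(iv) are symmetric in $f$ and $g$. For products, suppose $[f_1],[f_2]\in U_K$, with representatives $f_i$, inverse representatives $g_i$, and proper homotopies $g_if_i\simeq 1\simeq f_ig_i$ that are stationary on $K$ and preserve complementary components, as in (i)--(iv). Then $f_1f_2$ together with $g_2g_1$ witnesses $[f_1][f_2]\in U_K$: both maps are the identity on $K$ and preserve each complementary component because the factors do, $g_2g_1$ represents $[f_1f_2]^{-1}$, and a homotopy $g_2g_1f_1f_2\simeq 1$ with the required properties is built by applying $\phi\mapsto g_2\circ\phi\circ f_2$ to the homotopy $g_1f_1\simeq 1$ (this is again a proper homotopy, and it is stationary on $K$ and complementary-component-preserving since $f_2$ and $g_2$ fix $K$ pointwise and preserve components), obtaining $g_2g_1f_1f_2\simeq g_2f_2$, and then concatenating with the homotopy $g_2f_2\simeq 1$; the homotopy $f_1f_2g_2g_1\simeq 1$ is obtained symmetrically. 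Being an open (Lemma \ref{UkOpen}) subgroup of the topological group $\maps(X)$ (Proposition \ref{group}), $U_K$ is then also closed, since its complement is the union of the open cosets $[f]U_K$ with $[f]\notin U_K$.

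It remains to show the $U_K$ are cofinal among neighborhoods of $1$. Fix a neighborhood $U$ of $1$, which we may take open; then $\pi^{-1}(U)\subseteq PH(X)$ is open and contains $(\widehat{\operatorname{id}},\widehat{\operatorname{id}})$, so there is $\epsilon>0$ such that any $(\hat f,\hat g)\in PH(X)$ with $d(\hat f,\widehat{\operatorname{id}})<\epsilon$ and $d(\hat g,\widehat{\operatorname{id}})<\epsilon$ lies in $\pi^{-1}(U)$. Since $\partial X$ is compact and $X$ is locally finite, we can choose a finite subgraph $K$ so that $X\smallsetminus K$ has finitely many components $V_1,\dots,V_m$, each with $\diam\hat V_j<\epsilon$ in $\hat X$ (cover $\partial X$ by finitely many small basic neighborhoods and take $K$ large, absorbing the finitely many finite complementary components; this is the same kind of estimate used in the proof that $\pi$ is open). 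Then $\hat X$ is the disjoint union of $K$ and the sets $\hat V_1,\dots,\hat V_m$.

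Now let $[f]\in U_K$ and pick representatives $f,g$ of $[f],[f]^{-1}$ satisfying (i)--(iv) for this $K$. They are proper homotopy equivalences and mutual inverses, so $(\hat f,\hat g)\in PH(X)$ and $\pi(\hat f,\hat g)=[f]$. For $z\in\hat X$: if $z\in K$ then $\hat f(z)=f(z)=z$; if $z\in V_j$ then $f(z)\in V_j$ because $f$ maps each complementary component into itself; and if $z$ is an end of $\hat V_j$ then $\hat f(z)$ is again an end of $\hat V_j$, since $\hat f$ restricts to a homeomorphism of $\partial X$ carrying each clopen piece $\hat V_j\cap\partial X$ into itself, hence (there being finitely many) onto it. In every case $z$ and $\hat f(z)$ lie in a common $\hat V_j$ or are equal, so $d(\hat f(z),z)\le\max_j\diam\hat V_j<\epsilon$, and likewise for $\hat g$. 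Hence $(\hat f,\hat g)\in\pi^{-1}(U)$ and $[f]=\pi(\hat f,\hat g)\in U$, so $U_K\subseteq U$. The two steps that need genuine care are the composite‑homotopy bookkeeping in the subgroup argument and the compactness argument producing $K$; the rest is formal manipulation of the quotient topology on $\maps(X)$.
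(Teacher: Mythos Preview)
Your proof is correct and follows essentially the same approach as the paper: the subgroup check, the open-implies-closed argument via cosets, and the choice of $K$ so that complementary components have diameter $<\epsilon$ in $\hat X$ all match the paper's proof. You simply spell out in more detail the construction of the composite homotopy for products and the verification that $d(\hat f,\widehat{\operatorname{id}})<\epsilon$ on ends, which the paper leaves implicit.
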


\begin{proof}
To prove that $U_K$ is a subgroup, we must prove that if $[f_1],[f_2]
\in U_K$ with $f_1,f_2$ preferred representatives, then $[f_1][f_2]
\in U_K$. Indeed, $[f_1][f_2] = [f_1 \circ f_2]$ and $f_1 \circ f_2$ is
the identity on $K$ and preserves complementary components. The same
is true for its homotopy inverse $g_2 \circ g_1$ as well as for both
homotopies to $id_K$.

We proved in Lemma \ref{UkOpen} that $U_K$ is open. Thus $U_K$ is also
closed since its complement is a union of cosets, which are also open.

Finally, we must show that for every open $U \subset \maps(X)$ containing $1$, there is a compact $K$ such that $U_K \subset U$.
We have $\pi^{-1}U$ is an open set in $PH(X)$ containing $(1,1)$.
Let $\epsilon>0$ be such that if 
$(f, g)\in PH(X)$ and $d(1,\hat f)<\epsilon$, $d(1,\hat g)<\epsilon$
then $(\hat f,\hat g)\in \pi^{-1}(U)$. 
Let $K\subset X$ be compact so
that all complementary components of $K$ have diameter
$<\epsilon$. Then for preferred representatives $(f,g)$ of $[f] \in U_K$ we have
$(f, g)\in\pi^{-1}(U)$, so $U_K\subseteq \pi\pi^{-1}(U)=U$. 
\end{proof}

\begin{cor}
  $\maps(X)$ has a countable basis of clopen sets, it is separable,
  metrizable and totally disconnected.
\end{cor}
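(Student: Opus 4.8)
The plan is to deduce all four conclusions from facts already in hand: $\maps(X)$ is a $T_1$ topological group (Proposition~\ref{group}); the sets $U_K$, indexed by finite subgraphs $K\subset X$, are clopen subgroups forming a neighborhood basis at the identity (by the preceding proposition, together with Lemma~\ref{UkOpen}), and the complement of each $U_K$ is a union of open cosets; and $\pi\colon PH(X)\to\maps(X)$ is a continuous (quotient) surjection. The proof is then just bookkeeping.

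\textbf{Separability and a countable clopen basis.} First I would observe that $X$, being connected and locally finite, has countably many cells, hence only countably many finite subgraphs $K$; thus $\{U_K\}_K$ is a \emph{countable} neighborhood basis at $1$, so $\maps(X)$ is first countable. Next, $PH(X)$ is separable: it is a subspace of $(\hat X\to\hat X)^2$, which is separable metrizable since each factor is (e.g.\ by \cite[Theorem 4.19]{Kechris_sep}), and subspaces of separable metrizable spaces are separable. Since $\pi$ is a continuous surjection, the image of a countable dense subset of $PH(X)$ is dense, so $\maps(X)$ is separable; fix a countable dense set $D\subset\maps(X)$. I claim that
\[
\mathcal B=\{\,d\,U_K : d\in D,\ K\subset X\text{ a finite subgraph}\,\}
\]
is a countable basis of clopen sets. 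Each $dU_K$ is clopen because $U_K$ is. Given an open set $V$ and $x\in V$, the set $x^{-1}V$ is a neighborhood of $1$, so contains some $U_K$, whence $xU_K\subseteq V$; since $xU_K$ is open it contains some $d\in D$, and because $U_K$ is a subgroup, $d\in xU_K$ forces $dU_K=xU_K$, so $x\in dU_K\subseteq V$. Hence $\mathcal B$ is a basis, and $\maps(X)$ is second countable and zero-dimensional.

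\textbf{Total disconnectedness and metrizability.} A $T_1$ topological group is Hausdorff (indeed Tychonoff), so for distinct points $x,y\in\maps(X)$ some member of $\mathcal B$ contains $x$ but not $y$; being clopen, it disconnects $\maps(X)$, so $x$ and $y$ lie in different connected components, and $\maps(X)$ is totally disconnected. Finally, $\maps(X)$ is second countable and regular (a Hausdorff topological group is regular), hence metrizable by the Urysohn metrization theorem; alternatively, metrizability is immediate from the Birkhoff--Kakutani theorem applied to the first-countable $T_1$ topological group $\maps(X)$. None of this is hard; the only step that needs a moment's care is verifying that $\mathcal B$ is a basis (the coset argument above), and making sure separability of $\maps(X)$ is established \emph{before} one invokes a countable dense subset of it.
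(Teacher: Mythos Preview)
Your proof is correct and follows essentially the same route as the paper: separability comes from $\maps(X)$ being a continuous image of the separable space $PH(X)$, a countable clopen basis is built from cosets of the subgroups $U_K$, and metrizability follows via Urysohn (the paper goes through regular $\Rightarrow$ normal, you invoke regularity of Hausdorff topological groups or Birkhoff--Kakutani). The only cosmetic difference is in exhibiting the countable basis: the paper fixes an exhaustion $K_i$ and uses that separability forces each $U_{K_i}$ to have only countably many cosets, whereas you index the cosets by a countable dense set $D$ and use the coset-equality trick $d\in xU_K\Rightarrow dU_K=xU_K$; both arguments work and neither is simpler than the other.
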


\begin{proof}
  $\maps(X)$ is separable since it is the continuous image of $PH(X)$
  which is separable, being a subspace of a separable metric space. So
  in particular each open subgroup $U_K$ has at most countably many
  cosets.  Choose an exhaustion $K_i$ of $X$; then $U_{K_i}$ and their
  cosets form a countable basis of clopen sets.  Next, $T_1$ plus a
  basis of clopen sets implies regular (proof: let $x\notin A$ with
  $A$ closed; then there is a basis element $V$ with $x\in V$ and
  $V\cap A=\emptyset$ and so $V,V^c$ is the required separation). Then
  countable basis plus regular implies normal
  \cite[1.5.16]{engelking}, and finally countable basis plus normal
  implies metrizable (Urysohn metrization theorem, \cite[Theorem
    34.1]{munkres}).
\end{proof}

Recall that $X$ has {\it finite type} if $\pi_1(X)$ is finitely
generated and $\partial X$ is finite; otherwise $X$ has {\it infinite
  type}. 

\begin{lemma}\label{not compact}
  Suppose that $X$ has infinite
  type. Then for every finite subgraph $K\subset X$ there is a finite
  subgraph $L\subset X$ such that $L\supset K$ and $U_L<U_K$ has
  infinite (countable) index.
\end{lemma}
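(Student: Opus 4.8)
The plan is to exploit the fact that $X$ has infinite type to produce, near infinity, either infinitely many ends or infinite genus, and then use this to build infinitely many cosets of $U_L$ inside $U_K$. First I would pass to a Standard Model for $X$, which is harmless since $\maps(X)$ depends only on the proper homotopy type (Corollary \ref{topiso}) and $U_K$ is defined intrinsically. Having fixed $K$, choose $L$ to be any finite subgraph with $L \supset K$ that is large enough that at least one complementary component $W$ of $L$ is "complicated": because $X$ has infinite type, $\partial X$ is infinite or $g(X)=\infty$, and in either case, after enlarging $L$, some component $W$ of $X \smallsetminus L$ has either infinitely many ends or contains infinitely many disjoint immersed loops. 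Since $W$ is a complementary component of the \emph{larger} graph $L$, it is automatically contained in a single complementary component $V$ of $K$, and $W$ shares a frontier point (or frontier edge) with the rest of $V$; this is the toehold we need to build elements of $U_K$ that do not lie in $U_L$.

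The main construction: inside $V$, using the extra room in $V \smallsetminus W$ between $Fr_X(L)$ and $Fr_X(K)$, I would construct a proper self-map $\phi$ of $X$ supported in $V$ (identity outside $V$, and in particular on $K$) that is the identity outside $W$ as well \emph{except} for a "braiding" or "wrapping" move performed in the collar $V \smallsetminus W$. Concretely, if $W$ contains infinitely many ends, one takes $\phi$ to be a homeomorphism of $\partial X$ supported on $\partial W$, realized by a proper homotopy equivalence fixing $K$ and $V$ — and if $W$ has infinite genus, one instead takes $\phi$ to act on $\pi_1$ by an automorphism supported on the free factor $\pi_1(W)$, realized (via the Homotopy Extension Theorem, Proposition \ref{pasting_homotopies}) by a proper map that is the identity on $K$ and preserves complementary components of $K$. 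In both cases $\phi \in U_K$ because conditions (i)–(iv) only constrain behavior at and near $K$, which we have arranged; one also checks a homotopy inverse and the required stationary homotopies exist, as $\phi$ is "locally" invertible in $W$. By choosing a sequence $\phi_1, \phi_2, \dots$ of such moves — e.g. swapping/permuting successive ends or loops accumulating in $W$, or composing with increasingly deep commutator-type moves — one obtains infinitely many elements of $U_K$.

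Finally I would verify that these $\phi_n$ lie in \emph{distinct} cosets of $U_L$. Here is where Corollary \ref{criterion} does the work: $\phi_m^{-1}\phi_n \in U_L$ would force, among other things, $\phi_m^{-1}\phi_n$ to be properly homotopic \emph{relative to $L$} to a map that is the identity on $L$ and preserves complementary components of $L$; but by design $\phi_m$ and $\phi_n$ differ in how they act on $\partial W$ (or on $\pi_1(W)$), and these are closed, detectable conditions that any representative satisfying (i)–(ii) for $L$ would have to respect. So $\phi_m^{-1}\phi_n \notin U_L$ for $m \neq n$, giving infinitely many cosets; since $U_L$ is an open subgroup of the separable group $\maps(X)$ it has at most countably many cosets, so the index is countably infinite. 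The main obstacle I anticipate is the bookkeeping in the first paragraph — namely arguing cleanly that infinite type guarantees a single complementary component $W$ of a suitable $L$ that is itself of "infinite type" in the relevant sense — together with checking conditions (iii)–(iv) for the $\phi_n$, i.e. that the homotopy inverses and the nullhomotopies of $\phi_n \phi_n^{-1}$ can be taken stationary on $K$; both are routine given that $\phi_n$ is supported in $V$ and invertible there, but they require care to state precisely.
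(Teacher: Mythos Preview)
There is a genuine gap in your plan: your choice of $L$ is backwards, and as a result the elements $\phi_n$ you construct will all lie in $U_L$, not merely in $U_K$.

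You pick $L\supset K$ so that some complementary component $W$ of $L$ is complicated (infinitely many ends, or infinite genus), and then you build $\phi_n$ that act on $\partial W$ or on $\pi_1(W)$. But membership in $U_L$ only constrains a map on $L$ and on the \emph{partition} of $X\smallsetminus L$ into components; it says nothing about how the map acts \emph{inside} a single component $W$. A homeomorphism of $\partial X$ supported on $\partial W$, or an automorphism of $\pi_1(X)$ supported on the free factor $\pi_1(W)$, can be realized by a proper homotopy equivalence that is the identity on all of $L$ and preserves every complementary component of $L$; such a map is in $U_L$. So your verification step fails: ``$\phi_m$ and $\phi_n$ differ on $\partial W$ or $\pi_1(W)$'' does not prevent $\phi_m^{-1}\phi_n$ from lying in $U_L$, because elements of $U_L$ are free to act arbitrarily on $\partial W$ and $\pi_1(W)$.

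The fix, which is what the paper does, is to reverse the roles: first locate the complicated structure in a complementary component of $K$, and then choose $L$ so that $L$ \emph{separates} that structure into at least two pieces. Concretely, if some component of $X\smallsetminus K$ has genus $N>1$, take $L$ large enough to swallow a rank-$N$ subgraph there; then $U_K$ contains a copy of $Aut(F_N)$ acting on that subgraph while $U_L$ meets it trivially. Otherwise (after enlarging $K$) all complementary components of $K$ are trees and $\partial X$ is infinite; now choose $L$ so that the ends of one component of $X\smallsetminus K$ are split nontrivially as $A\sqcup B$ between two distinct complementary components of $L$. Elements of $U_K$ that mix $A$ and $B$ (finitely supported permutations if $A\cup B$ consists of isolated points, or homeomorphisms of a Cantor set otherwise) cannot preserve the complementary components of $L$, hence are not in $U_L$, and there are infinitely many cosets. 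The point throughout is that $L$ must cut through the complexity, not leave it all on one side.
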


\begin{proof}
  First recall that an infinite totally disconnected compact
  metrizable space either has infinitely many isolated points or else
  it is the disjoint union of a Cantor set and finitely many isolated
  points. Consider the complementary components of $K$. If one of them
  has genus $N>1$ then there is $L\supset K$ such that $U_K$ contains
  a subgroup $H$ isomorphic to the infinite group $Aut(F_N)$, while
  $H\cap U_L=1$. The elements of $H$ are realized by homotopy
  equivalences supported on a finite subgraph of genus $N$. 
  Otherwise, after
  perhaps enlarging $K$, we may assume that all complementary
  components are trees, and in this case $\partial X$ is
  infinite. According to the dichotomy above, we can find $L\supset K$
  such that one of the following holds:
  \begin{itemize}
    \item There is a complementary component of $K$ whose boundary at infinity
      contains an infinite set of isolated points, and this set can be
      written nontrivially as $A\cup B$ where $A$ and $B$ are boundary points of
      two distinct complementary components of $L$.
      \item There is a complementary component of $K$ whose boundary
        at infinity contains a Cantor set as a clopen subset, and this
        Cantor set can be written nontrivially as $A\cup B$ with both
        $A,B$ clopen and both boundary points of two distinct
        complementary components of $L$.
  \end{itemize}

  In the first case, $U_K$ contains the entire group $Perm_0(A\cup B)$
  of finitely supported permutations of $A\cup B$ (cf. the
  classification theorem), while the intersection of this group with
  $U_L$ contains only permutations that preserve $A$ and $B$. Since
  either $A$ or $B$ is infinite, this subgroup has infinite index.

  In the second case $U_K$ contains the entire group $H=Homeo(A\cup B)$,
  while the intersection $H\cap U_L$ contains only homeomorphisms that
  preserve $A$ and $B$, and this again has infinite index.
\end{proof}

\begin{lemma}
  $PH(X)\subset (\hat X\to\hat X)^2$ is a $G_\delta$-subset.
\end{lemma}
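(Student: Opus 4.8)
The plan is to exhibit $PH(X)$ as a countable intersection of open sets in $(\hat X\to\hat X)^2$. The membership condition ``$(\hat f,\hat g)$ are extensions of proper homotopy equivalences $f,g:X\to X$ that are mutual inverses'' is a conjunction of several requirements, and I will show each is either open or $G_\delta$ in the compact-open topology. Recall that $(\hat X\to\hat X)$ is a Polish space, so a $G_\delta$ subset is what we want; and a countable intersection of $G_\delta$ sets is again $G_\delta$.

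First, being an extension of a map $X\to X$: a continuous map $\hat X\to\hat X$ restricts to a map $X\to X$ iff it sends $\partial X$ into $\partial X$ iff it sends $X$ into $X$; since $\partial X$ is compact, $\{\hat f: \hat f(\partial X)\subseteq\partial X\}$ is closed (hence $G_\delta$). Being \emph{proper} on $X$: for a continuous map $\hat f$ of the compact space $\hat X$ sending $\partial X$ to $\partial X$, the restriction $f:X\to X$ is automatically proper (preimages of compact sets are closed in $\hat X$ and miss $\partial X$, hence compact); so properness is free once we have the previous condition. Next, being mutual inverses up to proper homotopy. Here I would use Corollary \ref{criterion}: $f$ is properly homotopic to the identity iff it preserves the homotopy class of every oriented closed curve and the proper homotopy class of every oriented proper line converging in both directions to ends in $DX$. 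So I need: $\hat g\hat f|_X$ and $\hat f\hat g|_X$ are properly homotopic to the identity. Composition $(\hat f,\hat g)\mapsto\hat g\circ\hat f$ is continuous, so it suffices to show that $\{\hat h\in(\hat X\to\hat X): h|_X\simeq id \text{ properly}\}$ is $G_\delta$. Fix a basepoint and countable generating sets: there are countably many conjugacy classes of closed curves and, because $DX\subseteq\partial X$ is separable and proper lines up to proper homotopy are determined by their pair of endpoints together with a homotopy class of path between them (countably many), countably many proper-line classes to check. Preserving the class of a \emph{single} closed curve $\gamma$ is a closed condition on $\hat h$ (if $h(\gamma)$ is not freely homotopic to $\gamma$, this persists under $C^0$-small perturbation, since freely homotopic loops in a graph are detected by their cyclically reduced edge-words and a small perturbation of $h$ keeps $h(\gamma)$ in a small neighborhood, hence homotopic to the original $h(\gamma)$). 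Likewise preserving the proper homotopy class of a single proper line is a closed condition, for the same reason together with properness of the extension near $\partial X$. Hence $\{\hat h: h|_X\simeq id\}$ is a countable intersection of closed sets, so closed in fact — which is even stronger than $G_\delta$, and matches the proof of Proposition \ref{group} where this set was already observed to be closed.

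Putting these together: $PH(X)$ is the intersection of $\{\hat f(\partial X)\subseteq\partial X\}\times\{\hat g(\partial X)\subseteq\partial X\}$ (closed), with the preimage under $(\hat f,\hat g)\mapsto \hat g\circ\hat f|_X$ of the closed ``$\simeq id$'' set, and similarly for $\hat f\circ\hat g$. Each is closed, so $PH(X)$ is in fact closed in $(\hat X\to\hat X)^2$ — in particular $G_\delta$.

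\textbf{The main obstacle} I anticipate is the bookkeeping needed to justify that ``preserving a closed curve'' and ``preserving a proper line'' are genuinely \emph{closed} (not merely $G_\delta$) conditions, i.e.\ that a $C^0$-small perturbation of $\hat h$ cannot change the (proper) homotopy class of the image of a fixed curve or line. For closed curves this is the standard fact that in a graph, free homotopy classes are detected by cyclically reduced words and small perturbations don't change them; for proper lines one additionally needs that the extension $\hat h$ being uniformly close forces the tracks of the perturbation homotopy near $\partial X$ to be small, so properness of the homotopy is maintained — essentially the same argument as in the proof that $\pi$ is open (Proposition \ref{pasting_homotopies} combined with the remark that proper homotopies are stationary on $\partial X$ with small tracks outside a large finite subgraph). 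Once this local rigidity is in hand, the rest is a routine countable intersection. If the ``closed'' claim turns out to need care in borderline cases, the fallback is to only claim $G_\delta$, which is all that is required for the statement, by writing ``$h$ preserves the class of $\gamma$'' as the complement of the open condition ``$h(\gamma)$ is freely homotopic to something in a fixed finite list of wrong classes'' — but I expect the cleaner ``closed'' formulation to go through.
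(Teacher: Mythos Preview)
Your overall strategy---decompose membership in $PH(X)$ into conditions and check each is closed or $G_\delta$---is exactly the paper's approach, and your use of Corollary~\ref{criterion} for the ``compositions are properly homotopic to the identity'' part is correct. But there is a genuine error in one step.

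You assert that a continuous map $\hat f:\hat X\to\hat X$ ``restricts to a map $X\to X$ iff it sends $\partial X$ into $\partial X$ iff it sends $X$ into $X$.'' This is false: the condition $\hat f(\partial X)\subseteq\partial X$ does \emph{not} imply $\hat f(X)\subseteq X$. For a concrete counterexample take $X=[0,\infty)$, so $\hat X=[0,\infty]$, and let $\hat f_n(x)=x+n$ (with $\infty\mapsto\infty$). Each $\hat f_n$ sends $X$ to $X$ and $\partial X$ to $\partial X$, and each $f_n$ is properly homotopic to the identity; pairing with $\hat g_n=id$ gives $(\hat f_n,id)\in PH(X)$. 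But $\hat f_n$ converges uniformly to the constant map $\infty$, which satisfies $\hat f(\partial X)\subseteq\partial X$ yet sends all of $X$ into $\partial X$. So the limit is not in $PH(X)$, and $PH(X)$ is \emph{not} closed. Your appeal to Proposition~\ref{group} is a misreading: there the relevant set is closed \emph{within} $PHE(X)$, not in all of $(\hat X\to\hat X)$.

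The fix is exactly what the paper does: treat $\hat f(X)\subseteq X$ and $\hat g(X)\subseteq X$ as a separate condition and observe that it is $G_\delta$ (write it as $\bigcap_n\{\hat f(K_n)\subseteq X\}$ for an exhaustion $K_1\subset K_2\subset\cdots$ of $X$ by compacta; each $\{\hat f(K_n)\subseteq X\}$ is open since $X$ is open in $\hat X$). With this correction your argument goes through and matches the paper's; you simply end up with $G_\delta$ rather than closed, which is all that is claimed.
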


\begin{proof}
  A pair $(\hat f,\hat g)\in (\hat X\to\hat X)^2$ is in $PH(X)$ iff
  (see Corollary \ref{criterion}):
  \begin{enumerate}[(1)]
    \item $\hat f(\partial X)\subseteq \partial X$, $\hat g(\partial
      X)\subseteq \partial X$,
      \item $\hat f(X)\subseteq X$, $\hat g(X)\subseteq X$,
      \item $\hat f\hat g$ and $\hat g\hat f$ are identity on $\partial X$,
        \item $\hat f\hat g$ and $\hat g\hat f$ restricted to $X$
          preserve the homotopy classes of oriented loops and proper
          homotopy classes of oriented proper lines joining points of $DX$. 
  \end{enumerate}

Conditions (1),(3) and (4) are closed conditions and (2) is
$G_\delta$: (2) can be written as countably many conditions $\hat
f(K_n)\subset X$, $\hat g(K_n)\subset X$ for an exhaustion $\{K_n\}$,
$n=1,2,\cdots$ and these are all open.
\end{proof}

When $X$ has finite type, $\maps(X)$ is countable and
discrete. Otherwise we have:

\begin{prop}
  Suppose $X$ has infinite type. Then $\maps(X)$ is a Polish group
  with the underlying space
  homeomorphic to $\Z^\infty$, i.e. to the set of irrationals.
\end{prop}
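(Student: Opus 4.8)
The plan is to identify $\maps(X)$ as a Polish group and then invoke the topological characterization of the irrationals. For the first point, recall that a separable, completely metrizable topological space with no isolated points that is totally disconnected is homeomorphic to $\Z^\infty$; this is Alexandrov's characterization of the irrationals (see \cite{Kechris_sep}). We have already shown that $\maps(X)$ is a topological group (Proposition \ref{group}), that it is separable, metrizable and totally disconnected, and that it has a countable basis of clopen subgroups $U_K$ and their cosets. So three things remain: completeness of the metric, absence of isolated points, and the verification that we are in the zero-dimensional Polish setting.

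First I would establish that $\maps(X)$ is Polish. The cleanest route is to use the quotient map $\pi \colon PH(X) \to \maps(X)$: we have shown $PH(X)$ is a $G_\delta$ subset of the Polish space $(\hat X \to \hat X)^2$, hence $PH(X)$ is itself Polish. A continuous open surjection from a Polish group onto a separable metrizable group need not a priori be Polish, but here the standard fact applies: a separable metrizable topological group that admits a countable basis of open subgroups and is the continuous open image of a Polish group is Polish. Concretely, one can argue directly that $\maps(X)$ admits a complete metric: using the clopen subgroups $U_{K_i}$ for an exhaustion $\{K_i\}$, one builds a left-invariant metric $d$ on $\maps(X)$ refining this neighborhood basis of the identity (the Birkhoff--Kakutani construction), and then shows a $d$-Cauchy sequence $[f_n]$ has a convergent subsequence by choosing representatives $\hat f_n \in PHE(X)$ that are eventually constant on each $K_i$ and passing to a limit in $(\hat X \to \hat X)$, which will lie in $PHE(X)$ because conditions (1)--(4) of the $G_\delta$ lemma are closed or $G_\delta$ and are preserved in the limit for the specific sequence constructed. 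This is the step I expect to be the main obstacle: one must check carefully that the limiting map is a genuine proper homotopy equivalence and not merely a continuous self-map of $\hat X$ — this is exactly the subtlety behind Example \ref{noninvertible}, which is why working inside $PH(X)$ (with the inverse built in) rather than inside $(\hat X \to \hat X)$ is essential.

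Next I would show $\maps(X)$ has no isolated points; equivalently, the identity is not isolated. This is where the hypothesis of infinite type enters, via Lemma \ref{not compact}: for every finite subgraph $K$ there is $L \supset K$ with $[U_K : U_L]$ countably infinite. Given any basic neighborhood $U_K$ of the identity, pick such an $L$; then $U_K$ contains the infinitely many distinct cosets of $U_L$, each of which meets $U_K$, so $U_K$ is infinite and in particular contains points other than the identity. Hence no $U_K$ is the singleton $\{1\}$, and since the $U_K$ form a neighborhood basis at $1$, the identity is not isolated. By homogeneity of topological groups, no point is isolated.

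Finally I would assemble the pieces. We now know $\maps(X)$ is separable, completely metrizable (Polish), totally disconnected, and has no isolated points. A nonempty Polish space that is totally disconnected and has no isolated points and is nowhere locally compact is homeomorphic to $\Z^\infty$; the nowhere-locally-compact condition follows since any compact open set would be a finite union of cosets of some $U_{K}$, contradicting Lemma \ref{not compact} which shows each $U_K$ has a subgroup of infinite index and hence infinitely many cosets inside it — so no $U_K$ is compact, and every nonempty open set contains some coset of some $U_K$. Therefore $\maps(X)$ is homeomorphic to the set of irrationals, completing the proof.
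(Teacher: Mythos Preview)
Your opening statement of the characterization of the irrationals is wrong: a separable, completely metrizable, totally disconnected space with no isolated points need not be the irrationals---the Cantor set satisfies all four hypotheses. You silently repair this at the end by adding ``nowhere locally compact,'' which is the correct form (and is the Hausdorff characterization the paper quotes: every compact subset has empty interior). Once that is in place the ``no isolated points'' step is redundant, since an isolated point is itself a compact open set.

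On completeness, your route and the paper's diverge. You propose to build a left-invariant metric via Birkhoff--Kakutani and then argue directly that Cauchy sequences converge by manufacturing representatives in $PH(X)$ that stabilize on an exhaustion and passing to a limit. You correctly flag this as the delicate step, and indeed it is: one has to simultaneously control the representatives \emph{and} their inverses so that the limit pair lands back in $PH(X)$, and the details you give (``eventually constant on each $K_i$'', ``conditions (1)--(4) are preserved in the limit'') are plausible but not yet a proof. The paper bypasses all of this with a one-line appeal to a theorem of Sierpi\'nski: an open continuous surjection from a completely metrizable separable space to a separable metrizable space has completely metrizable image. Since $PH(X)$ is $G_\delta$ in the Polish space $(\hat X\to\hat X)^2$, it is Polish, and $\pi$ is open, so $\maps(X)$ is Polish immediately. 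This is both shorter and avoids the obstacle you anticipated.

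For the final step your argument and the paper's coincide: if some compact subset had nonempty interior, it would contain a coset of some $U_K$, which is clopen and hence compact; but Lemma~\ref{not compact} gives $U_L<U_K$ of infinite index, so the cosets of $U_L$ inside $U_K$ form an infinite disjoint open cover with no finite subcover, contradiction.
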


\begin{proof}
There is a theorem of Sierpinski that if $f\from X\to Y$ is an open
surjective map between separable metric spaces and $X$ is complete,
then $Y$ is completely metrizable (see \cite[Exercise
  5.5.8.(d)]{engelking}).  Since $PH(X)$ is a $G_\delta$ subset of
$(\hat X\to\hat X)^2$, it is completely metrizable (see
\cite[4.3.23]{engelking}) and therefore $\maps(X)$ is completely
metrizable.  Then the fact that $\maps(X)$ is homeomorphic to the
irrationals follows from a theorem of Hausdorff (see
e.g. \cite{eberhart}): If $Z$ is separable, completely metrizable,
zero dimensional (i.e. has a basis of clopen sets), and every compact
subset has empty interior, then $Z$ is homeomorphic to the
irrationals.  To finish the proof, if $\maps(X)$ had a compact subset
with nonempty interior, then some $U_{K}$ would be compact. But this
contradicts Lemma \ref{not compact}, since $U_{K}$ is covered by the
pairwise disjoint cosets of $U_{L}$ and this cover doesn't have a
finite subcover.
\end{proof}

We finish this section by considering continuity properties of
homomorphisms studied in Section \ref{s:algebra}.

Recall the surjective homomorphism $\sigma \from \maps(X)\to Homeo(\p X,\p
X_g)$ from Corollary \ref{sigma}. The group $Homeo(\p X,\p X_g)$ is
equipped with the compact-open topology. This means that a basis of
neighborhoods of the identity is defined by clopen subgroups
$V_{\mathcal P}$ where $\mathcal P$ is a finite partition of $\p X$
into clopen subsets and $V_{\mathcal P}$ consists of the elements of
$Homeo(\p X,\p X_g)$ that leave the partition elements
invariant. Refining the partition yields a smaller clopen subgroup.

\begin{cor}\label{4.12}
  The homomorphism $\sigma$ is continuous and open. In particular, when $X$
  is a tree, $\sigma:\maps(X)\to Homeo(\p X)$ is an isomorphism of
  topological groups.
\end{cor}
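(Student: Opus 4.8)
The plan is to prove the two parts of Corollary \ref{4.12} separately: first continuity of $\sigma$, then openness, and finally combine with Corollary \ref{sigma} to get the isomorphism statement for trees.

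For continuity, it suffices to check that $\sigma$ is continuous at the identity, since $\sigma$ is a homomorphism between topological groups. So I would start with a basic clopen neighborhood $V_{\mathcal P}$ of $1\in Homeo(\p X,\p X_g)$, where $\mathcal P$ is a finite partition of $\p X$ into clopen sets. Each partition element $P\in\mathcal P$ is of the form $\hat U\cap\p X$ for $U$ a component of $X\smallsetminus K$ for some sufficiently large finite subgraph $K$; enlarging $K$ if necessary I may assume a single $K$ works for all elements of $\mathcal P$, and moreover that $K$ is connected so that the components of $X\smallsetminus K$ are exactly indexed by the partition elements together with possibly some components with no ends. Then I claim $\sigma(U_K)\subseteq V_{\mathcal P}$: if $[f]\in U_K$ with preferred representative $f$, then $f$ preserves each complementary component of $K$ (condition (ii) in the definition of $U_K$), hence the induced homeomorphism $f:\p X\to\p X$ preserves each $\hat U\cap\p X$, i.e. each element of $\mathcal P$, so $\sigma([f])=f|_{\p X}\in V_{\mathcal P}$. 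This gives continuity.

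For openness, since $\sigma$ is a continuous surjective homomorphism, it suffices to show that $\sigma$ maps some neighborhood basis of $1$ to sets that are open, or equivalently (using that $\sigma$ is surjective and that the $U_K$ form a neighborhood basis) that $\sigma(U_K)$ is open in $Homeo(\p X,\p X_g)$ for every finite $K$. Given such a $K$, let $\mathcal P_K$ be the partition of $\p X$ induced by the components of $X\smallsetminus K$ (discarding components with no ends). I would show $\sigma(U_K)\supseteq V_{\mathcal P_K}$: given $\phi\in V_{\mathcal P_K}$, i.e. $\phi$ preserves each element of $\mathcal P_K$ and preserves $\p X_g$, by Corollary \ref{sigma} (surjectivity of $\sigma$) there is $[f]\in\maps(X)$ with $\sigma([f])=\phi$; the point is to massage $f$ into a representative satisfying conditions (i)--(iv) of the definition of $U_K$. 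Since $\phi$ preserves each complementary component's end set and fixes $\p X_g$, after a proper homotopy $f$ preserves each complementary component of $K$; and since $K$ is finite and $f$ restricted to $\p X$ near $K$ is trivial on the relevant combinatorics, one can further homotope $f$ to be the identity on $K$ — this uses the Homotopy Extension Theorem (Proposition \ref{pasting_homotopies}) and the standard fact that a proper homotopy equivalence of a graph can be adjusted to fix a given finite subgraph, building the homotopy so that it is stationary on $K$ and preserves complementary components; the same adjustments applied to a proper homotopy inverse and to the homotopies $fg\simeq 1$, $gf\simeq 1$ give conditions (iii) and (iv). Hence $[f]\in U_K$ and $\phi=\sigma([f])\in\sigma(U_K)$, so $\sigma(U_K)\supseteq V_{\mathcal P_K}$. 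Combined with $\sigma(U_K)\subseteq V_{\mathcal P_K}$ from the continuity argument, in fact $\sigma(U_K)=V_{\mathcal P_K}$, which is open; since cosets of $U_K$ cover $\maps(X)$ and $\sigma$ is a homomorphism, $\sigma$ is an open map.

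Finally, for the tree case: when $X$ is a tree, Corollary \ref{sigma} already states that $\sigma:\maps(X)\to Homeo(\p X)$ is a group isomorphism ($\p X_g=\emptyset$), and we have just shown it is continuous and open, hence a homeomorphism; therefore it is an isomorphism of topological groups. I expect the main obstacle to be the openness direction, specifically the careful verification that a representative realizing a given boundary homeomorphism $\phi\in V_{\mathcal P_K}$ can be adjusted to satisfy all four conditions (i)--(iv) defining $U_K$ simultaneously and compatibly (the representative, its inverse, and both null-homotopies all being stationary on $K$ and complementary-component-preserving). This is where one must invoke Proposition \ref{pasting_homotopies} several times and take care that homotoping $f$ to fix $K$ does not disturb the complementary-component structure — the "moreover" clause of Proposition \ref{pasting_homotopies}, controlling tracks outside a frontier, is exactly the tool for this, as in the proof of Lemma \ref{UkOpen}.
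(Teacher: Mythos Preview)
Your continuity argument is fine and matches the paper's.

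The openness argument has a real gap. You take an arbitrary $[f]\in\maps(X)$ with $\sigma([f])=\phi$ and then try to homotope $f$ so that it fixes $K$. But this is generally impossible: nothing about $\phi\in V_{\mathcal P_K}$ constrains what $f$ does on $\pi_1(K)$, and if $K$ has positive genus the class $[f]$ you picked may induce a nontrivial automorphism of the free factor $\pi_1(K)<\pi_1(X)$, in which case no representative of $[f]$ can be the identity on $K$. (Concretely, take $\phi=id$ and let $[f]$ be any nontrivial element of $P\maps(X)$ supported on $K$.) The sentence ``one can further homotope $f$ to be the identity on $K$ --- this uses the Homotopy Extension Theorem \ldots and the standard fact that a proper homotopy equivalence of a graph can be adjusted to fix a given finite subgraph'' invokes a ``standard fact'' that is simply false.

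The paper avoids this by not starting from an arbitrary preimage at all. Working in a Standard Model and taking $K$ to be a subtree of the underlying tree together with all attached loops, it \emph{constructs} an element of $U_K$ hitting $\phi$ directly: for each complementary component $W$ with attaching vertex $v=\overline W\cap K$, view $\p W$ as $A_W\cup\{v\}$ with $A_W$ the corresponding partition element, extend $\phi|_{A_W}$ by $v\mapsto v$, and apply the Classification Theorem to $W$ to get $f_h\in\maps(W)$ fixing $v$ and inducing $\phi|_{A_W}$. Setting $f=id$ on $K$ and $f=f_h$ on each $W$ gives $[f]\in U_K$ with $\sigma([f])=\phi$. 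So the right move for openness is construction piece by piece, not homotoping a given representative.
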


\begin{proof}
  We may assume that $X$ is a Standard Model. We will consider finite
  subgraphs $K\subset X$ consisting of a subtree in the
  underlying tree together with all circles attached to it. The
  complementary components of $K$ determine a partition $\mathcal P_K$
  of $\p X$. Since every partition $\mathcal P$ is refined by some
  $\mathcal P_K$ and $\sigma(U_K)\subseteq V_{\mathcal P_K}$ it
  follows that $\sigma$ is continuous. To prove that $\sigma$ is open
  it suffices to argue that $\sigma(U_K)=V_{\mathcal P_K}$. Let $W$ be
  a complementary component of $K$. Thus $\p W$ is one of the
  partition elements $A_W$ of $\mathcal P_K$ together with one point $v$
  corresponding to the vertex of intersection
  $\overline W \cap K$. Given a homeomorphism $h$ of $(A_W,A_W\cap\p
  X_g)$, extend it by $v\mapsto v$ and view it as a homeomorphism of
  $(\p W,\p W_g)$. By the Classification Theorem there is
  $f_h\in\maps(W)$ that induces $h:\p W\to\p W$. Now define
  $f\in\maps(X)$ as the identity on $K$ and as $f_h$ on $W$, for each
  complementary component $W$, and observe that $\sigma(f)$ is the
  given homeomorphism in $V_{\mathcal P_K}$.
\end{proof}

Next, recall the homomorphism $\Psi:Maps(X)\to Out(\pi_1(X))$ to the
Polish group $Out(\pi_1(X))$. It is injective when $X$ is a core graph
(Theorem \ref{id}). 

\begin{prop}
  The homomorphism $\Psi$ is continuous. If
  the genus of $X$ is infinite, the image is not a closed subgroup. If
  in addition $X$ is a core graph then $\Psi$ is injective but it is
  not a homeomorphism onto its image.
\end{prop}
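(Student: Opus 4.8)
The plan is to establish each of the three assertions separately, reusing the topology we built on $\maps(X)$ via the quotient map $\pi:PH(X)\to\maps(X)$ and the clopen subgroups $U_K$.

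\textbf{Continuity of $\Psi$.} Since $\Psi$ is a homomorphism of topological groups, it suffices to check continuity at the identity, i.e. that for every open neighborhood $V$ of $1\in Out(\pi_1(X))$ the preimage $\Psi^{-1}(V)$ is open. The Polish topology on $Out(\pi_1(X))$ (viewed as a closed subgroup of the permutation group of a countable set, say a free basis together with the countably many conjugacy classes it generates) has a neighborhood basis at $1$ given by subgroups $N_S$ that fix pointwise the (conjugacy classes of) finitely many elements $S\subset\pi_1(X)$. So it is enough to show: given finitely many conjugacy classes $[w_1],\dots,[w_m]$ in $\pi_1(X)$, there is a finite subgraph $K\subset X$ so that every $[f]\in U_K$ has $\Psi([f])$ fixing each $[w_i]$. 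Choose $K$ large enough to contain immersed loops representing all $w_i$; then a representative $f$ of $[f]\in U_K$ is the identity on $K$, so it fixes each loop on the nose, hence $\Psi([f])$ fixes $[w_i]$. Thus $U_K\subseteq\Psi^{-1}(N_S)$ and $\Psi$ is continuous.

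\textbf{The image is not closed when $g(X)=\infty$.} Here I would exhibit a sequence $\phi_n\in\Psi(\maps(X))$ converging in $Out(\pi_1(X))$ to an automorphism $\phi_\infty$ not in the image. Concretely, passing to a Standard Model, take the subgraph consisting of a ray with a circle $x_n$ attached at each integer (when $g(X)=\infty$ one can find such a configuration, possibly after a proper homotopy equivalence, using the classification); the map of Example \ref{noninvertible}, $x_0\mapsto x_0$, $x_n\mapsto x_nx_{n-1}$, is realized by a proper map, so its class $\phi_\infty$ lies in $Out(\pi_1(X))$, but as observed there its inverse is not realizable by a proper map — hence $\phi_\infty\notin\Psi(\maps(X))$, since $\Psi(\maps(X))$ is a \emph{group}. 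On the other hand the truncations $\phi_n$ (equal to $\phi_\infty$ on $x_0,\dots,x_n$ and the identity on $x_k$ for $k>n$) are each realized by a proper homotopy equivalence supported on a finite subgraph, so $\phi_n\in\Psi(\maps(X))$, and $\phi_n\to\phi_\infty$ in the permutation topology since they agree on longer and longer initial segments of a generating set. Therefore $\Psi(\maps(X))$ is not closed. The routine point to check is that $\phi_n$ genuinely converges to $\phi_\infty$ with respect to a chosen generating set and its conjugacy classes; I expect this to be the part needing the most bookkeeping, but it is essentially the same computation as in Example \ref{noninvertible}.

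\textbf{Not a homeomorphism onto its image when $X$ is a core graph.} By Theorem \ref{id}, $\Psi$ is injective in this case, so it is a continuous bijection onto $\Psi(\maps(X))$; I must show the inverse is not continuous, equivalently that $\Psi$ is not open onto its image. It suffices to find a neighborhood $U$ of $1$ in $\maps(X)$ whose image $\Psi(U)$ contains no neighborhood of $1$ in $\Psi(\maps(X))$. Take $U=U_K$ for a finite subgraph $K$ with $g(K)\geq 2$ and containing at least one circle $x_0\subset K$; the natural candidate for an element of $\Psi(\maps(X))$ arbitrarily close to $1$ but not in $\Psi(U_K)$ is (the class of) an automorphism supported outside $K$, say $x_n\mapsto w x_n w^{-1}$ for a long word $w$ in generators carried by circles far from $K$ — this is realized by a proper homotopy equivalence, agrees with the identity on an arbitrarily large finite generating set (so lies in any prescribed basic neighborhood of $1$ in $Out(\pi_1(X))$), yet any representative realizing it must move points of $K$ around the long loop $w$, so it cannot lie in $U_K$. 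Making this precise requires checking that no representative of such a class lies in $U_K$, which follows because $U_K$-representatives are the identity on $K$ and hence induce an automorphism fixing the based subgroup $\pi_1(K)$, whereas conjugation by a nontrivial $w$ acts nontrivially on that subgroup when $g(K)\geq 2$; this compatibility argument is the main obstacle, and it is the reason we need $g(K)\geq 2$ (so that $\pi_1(K)$ has trivial center). Hence $\Psi(U_K)$ omits elements from every neighborhood of $1$, so $\Psi^{-1}$ is discontinuous at $1$, and $\Psi$ is not a homeomorphism onto its image.
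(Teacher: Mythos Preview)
Your continuity argument and your treatment of ``image not closed'' are fine and match the paper's approach (the paper in fact uses exactly the truncations $\phi_n$ of the map from Example~\ref{noninvertible}).

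The third part, however, has a genuine gap. Your candidate automorphism sends a single far-away generator $x_n$ to $wx_nw^{-1}$ with $w$ a word in generators also carried by circles \emph{far from} $K$, and is the identity on all other generators. But then this automorphism is realized by a proper homotopy equivalence that is literally the identity on $K$ and is supported in the complementary component of $K$ containing $x_n$ and the letters of $w$: just send the loop $x_n$ to the immersed loop $wx_nw^{-1}$ inside that component. Such a representative satisfies (i)--(iv) in the definition of $U_K$, so your element \emph{does} lie in $\Psi(U_K)$. Your final sentence (``conjugation by a nontrivial $w$ acts nontrivially on $\pi_1(K)$'') does not apply: the automorphism you wrote down is the identity on every generator in $K$, hence the identity on $\pi_1(K)$; nothing is being conjugated there.

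What is needed is the opposite idea: the image of a far-away generator must be forced to involve a generator \emph{inside} $K$. The paper does this with $g_n$ defined by $g_n(x_k)=x_k$ for $k\le n$ or $k\ge 2n$ and $g_n(x_k)=x_kx_1$ for $n<k<2n$. Then $\Psi(g_n)\to 1$ since $g_n$ fixes $x_0,\dots,x_n$, while for any finite $K$ containing $x_1$ and any large $n$ no representative of $[g_n]$ can preserve the complementary components of $K$: the conjugacy class of $x_{n+1}x_1$ is not carried by the component containing $x_{n+1}$. Hence $[g_n]\notin U_K$ and $g_n\not\to 1$ in $\maps(X)$. The fix to your argument is simply to let $w$ involve a fixed generator in $K$ rather than far-away ones.
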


\begin{proof}
  The topology on $Aut(\pi_1(X))$ is defined as a subgroup of the
  symmetric group $S_\infty$ on the countable set $\pi_1(X)$, so an
  automorphism is close to the identity if it fixes a large finite
  set. The group
  $Out(\pi_1(X))$ is equipped with the quotient topology. If $U$ is an
  open neighborhood of the identity in $Out(\pi_1(X))$, its preimage
  in $Aut(\pi_1(X))$ will contain all automorphisms that fix a certain
  finite set $F$. The elements of $F$ are realized inside some compact
  subgraph $K\subset X$ and it follows that $\Psi(U_K)\subseteq U$, so
  $\Psi$ is continuous.

  Consider $f \from X\to X$ from Example \ref{noninvertible}. Let $f_n \from X\to
  X$ be defined by $f_n(x_0)=x_0$, $f_n(x_k)=x_kx_{k-1}$ for $k\leq n$
  and $f_n(x_k)=x_k$ for $k>n$. Then $\Phi(f_n)\to f_*\in
  Out(\pi_1(X))$, but $f_*$ is not in the image of $\Phi$.

  Similarly, consider $g_n \from X\to X$ defined by $g_n(x_k)=x_k$
  when $k\leq n$ or $k\geq 2n$, $g_n(x_k)=x_kx_{1}$ when $n<k<2n$. Then
  $\Psi(g_n)\to id$ but the sequence $g_n$ does not converge to $id$
  (or anywhere). So $\Psi$ is not a homeomorphism onto its image.

  Generalizing these examples to other graphs is left to the reader.
\end{proof}

Finally, we have the following statement, whose proof is left to the
reader.

\begin{prop}
  The restriction epimorphisms $PMaps(X)\to PMaps(X_g)$ and $PMaps(X)\to
  PMaps(X_g^*)$ are continuous and open.
\end{prop}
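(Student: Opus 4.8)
The plan is to establish, in the spirit of Corollary \ref{4.12}, the precise identity
$$R\big(U_K^X\cap P\maps(X)\big)=U_K^{X_g}\cap P\maps(X_g)\qquad\text{for every finite subgraph }K\subseteq X_g,$$
where $R\colon P\maps(X)\to P\maps(X_g)$ is the restriction and $U_K^X,U_K^{X_g}$ denote the clopen subgroups of Section \ref{clopen} computed in $X$ and in $X_g$ respectively; and likewise $R'\big(U_K^X\cap P\maps(X)\big)=U_K^{X_g^*}\cap P\maps(X_g^*)$ for finite $K\subseteq X_g^*$. Since $R$ is a homomorphism of topological groups and the sets $U_K^X\cap P\maps(X)$, $U_K^{X_g}\cap P\maps(X_g)$ form neighbourhood bases of the identity by clopen subgroups, the inclusion ``$\subseteq$'' gives continuity and ``$\supseteq$'' shows $R$ carries each basic clopen subgroup onto an open subgroup, hence is open. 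By Corollary \ref{topiso} we may take $X$ to be a Standard Model, so $X$ is obtained from $X_g$ (resp. $X_g^*$) by attaching trees and the nearest‑point retraction $\pi\colon X\to X_g$ (resp. $\pi'\colon X\to X_g^*$) has the property required in Lemma \ref{restriction}; crucially $\pi$ collapses each attached tree onto its attaching vertex, so a complementary component of $K$ in $X$ is sent by $\pi$ onto the corresponding complementary component of $K$ in $X_g$. I describe the argument for $R$; for $R'$ it is identical, with $\rho_0$ now part of the subgraph and $\alpha_0\in\partial X_g^*$.

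For ``$\subseteq$'': let $f$ be the preferred representative of a class in $U_K^X\cap P\maps(X)$, with inverse $g$ and homotopies $gf\simeq1$, $fg\simeq1$ stationary on $K$ and preserving complementary components of $K$. As $f$ fixes $\partial X$ pointwise, $\pi f|_{X_g}$ is proper by Lemma \ref{restriction}(i) and represents $R([f])$. Since $f=\mathrm{id}$ on $K$ and preserves complementary components of $K$ in $X$, the collapsing property of $\pi$ shows that $\pi f|_{X_g}$ satisfies (i) and (ii) in the definition of $U_K^{X_g}$; applying this to $g$ gives (iii). For (iv) I push the two homotopies into $X_g$: restricting them to $X_g\times I$ and composing with $\pi$ yields homotopies that are proper again by Lemma \ref{restriction}(i) (the restrictions are stationary on $\partial X_g$), stationary on $K$, and component‑preserving by the same collapsing property. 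One extra step is needed because $(\pi f|_{X_g})\circ(\pi g|_{X_g})$ is not literally $\pi\circ(fg)|_{X_g}$; the two are joined by the homotopy $t\mapsto\pi\circ f\circ r_t\circ g|_{X_g}$, where $r_t\colon X\to X$ is a homotopy from $\mathrm{id}$ to $\iota\pi$ fixing $X_g$ (with $\iota\colon X_g\hookrightarrow X$ the inclusion), which is not proper as a homotopy; nevertheless this particular homotopy $X_g\times I\to X_g$ is proper because $(x,t)\mapsto f(r_t(g(x)))$ is a proper map $X_g\times I\to X$ — here one uses that $g$ carries points of $X_g$ near $\partial X_g$ to points of $X$ near $\partial X_g$, hence away from the interiors of the attached trees, so that $r_t$ only folds back within a compact region — and then Lemma \ref{restriction}(i) applies once more. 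Concatenating proves (iv).

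For ``$\supseteq$'': let $h$ be the preferred representative of a class in $U_K^{X_g}\cap P\maps(X_g)$. After refining $K$ I may assume it is connected and contains the attaching vertex of every attached tree it meets. Define $f\colon X\to X$ directly: set $f=h$ on $X_g$; set $f=\mathrm{id}$ on every attached tree rooted at a vertex of $K$ (consistent with $f|_{X_g}=h$ since $h$ fixes that root); and on an attached tree $T$ rooted at $v\notin K$, with $h(v)$ in the same complementary component $W$ of $K$ in $X_g$ as $v$ (it is, as $h$ preserves $W$), let $f|_T$ be the inclusion with its first edge replaced by a finite edge path in $W$ from $h(v)$ to $v$ followed by that edge. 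Then $f$ is a proper map inducing the identity on $\partial X$ with $f|_{X_g}=h$, so by Corollary \ref{criterion2} it is a proper homotopy equivalence and $R([f])=[h]$; moreover $f=\mathrm{id}$ on $K$, and $f$ preserves every complementary component of $K$ in $X$ (on a tree rooted in $K$ it is the identity, and on a tree rooted at $v\notin K$ its image lies in the component over $W$). The inverse and the homotopies required for $f\in U_K^X$ are produced from $h^{-1}$ and its homotopies in exactly the same fashion, giving an inverse $g$; over a tree $T$ rooted outside $K$ the restriction of $g\circ f$ to $T$ differs from the identity only by wrapping that one edge around a loop, and the homotopy to the identity on the contractible tree $T$ — with its track at $v$ prescribed by the homotopy $h^{-1}h\simeq1$ on $X_g$ — exists and is proper once the prepended paths and the tracks of $h^{-1}h\simeq1$ (and of $hh^{-1}\simeq1$) are chosen coherently.

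The routine part is the bookkeeping of the first paragraph together with the collapsing property of $\pi$. The main obstacle is condition (iv) on both sides: for ``$\subseteq$'', keeping the pushed‑down homotopies simultaneously proper, stationary on $K$, and component‑preserving — which is exactly what Lemma \ref{restriction}(i) and the auxiliary retraction homotopy are for; for ``$\supseteq$'', the compatibility at the attaching vertices that $h$ moves, where the prepended edge paths and the tracks of $h^{-1}h\simeq1$ and $hh^{-1}\simeq1$ must be selected so that the extended homotopies match up across those vertices. The argument for $R'\colon P\maps(X)\to P\maps(X_g^*)$ is the same, reading $X_g^*$, $\pi'$, and $\partial X_g^*$ for $X_g$, $\pi$, and $\partial X_g$.
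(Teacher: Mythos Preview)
The paper leaves this proof to the reader, so there is no argument to compare against. Your strategy of matching the clopen subgroups $U_K$ on the two sides for $K\subset X_g$ is the natural one and your continuity (``$\subseteq$'') argument is correct, including the bridging homotopy via $r_t$; the key point there, which you state a bit loosely, is that for each fixed attached tree $T_v$ the set $\{x\in X_g:g(x)\in T_v\}$ is compact because $g$ fixes $\partial X$ and $\partial T_v\subset DX$ is disjoint from $\partial X_g$.

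Two points in the openness (``$\supseteq$'') direction need more than you give them. First, you are implicitly using that the subgroups $U_K^X\cap P\maps(X)$ with $K$ ranging only over finite subgraphs of $X_g$ already form a neighbourhood basis of the identity in $P\maps(X)$; without this, knowing that $R$ carries each such subgroup onto an open subgroup does not yet show $R$ is open. This is true but requires an argument: given a finite $K_0\subset X$, take $K\subset X_g$ containing $K_0\cap X_g$ and the attaching vertex of every tree meeting $K_0$; a preferred representative $g$ for $U_K^X\cap P\maps(X)$ then sends each such tree $T$ into itself, and since $g$ is pure it fixes $\partial T$, so $g|_T\simeq\mathrm{id}_T$ properly by the classification of trees, and one can promote $g$ to a representative for $U_{K_0}^X$.

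Second, your extension $f$ need not be proper as written. You require only that the prepended path from $h(v)$ to $v$ lie in the component $W$ of $X_g\smallsetminus K$, but a single $W$ may contain infinitely many attaching vertices and the paths could all cross a fixed compact set, so that the $f$-preimage of that set meets the first edge of infinitely many attached trees. The paths must be chosen to go to infinity with $v$; this is possible because $h\in P\maps(X_g)$ fixes $\partial X_g$, so $v$ and $h(v)$ eventually lie in the same complementary component of any prescribed compact subgraph of $X_g$. The same issue recurs in your verification of (iv): on $T_v$ with $v\notin K$ one computes that $gf$ prepends the path $h^{-1}(p_v)\cdot q_v$ to the first edge, and extending the given homotopy $h^{-1}h\simeq 1$ across $T_v$ forces this path to be homotopic rel endpoints to the track $\tau_v$ of that homotopy at $v$. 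Your phrase ``chosen coherently'' is exactly this constraint, and it genuinely restricts $q_v$ once $p_v$ and the homotopy on $X_g$ are fixed. Alternatively, once continuity is established, openness follows at once from the open mapping theorem for continuous surjective homomorphisms between Polish groups, which bypasses all of this bookkeeping.
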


\section{Proof of Main Theorem for core graphs}

\subsection{Free factor systems}
\def\F{{\mathbb F}} Let $\F$ be a free group, possibly of infinite
rank. Recall that a nontrivial subgroup $A<\F$ is a {\it free factor}
of $\F$ if there is a subgroup $B<\F$ such that $A*B=\F$. We will only
consider free factors of finite rank, and only conjugacy classes $[A]$
of such free factors. To simplify notation we will usually omit the
brackets. Topologically, a (conjugacy class of a) nontrivial subgroup
is a free factor if there is a graph $\Gamma$ with $\pi_1(\Gamma)=\F$
and with $A$ represented by a subgraph. Similarly, a finite collection
$\mathcal F$ of (conjugacy classes of) finitely generated free factors
is a {\it free factor system} if there are representatives
$A_1,A_2,\cdots,A_n$ and a subgroup $B<\F$ such that $A_1*A_2*\cdots
*A_n*B=\F$. Topologically, there is a graph $\Gamma$ with
$\pi_1(\Gamma)=\F$ and with the $A_i$s represented by pairwise
disjoint subgraphs.

If $\mathcal F$ and $\mathcal F'$ are two free factor systems, the
intersection $\mathcal F\cap\mathcal F'$ is naturally a free factor
system. It consists of conjugacy classes of nontrivial subgroups
obtained by intersecting a representative of a conjugacy class in
$\mathcal F$ with a representative of a conjugacy class in $\mathcal
F'$. Topologically, one can represent $\mathcal F$ and $\mathcal F'$
by immersions of finite graphs $\Gamma_F\to\Gamma$ and
$\Gamma_{F'}\to\Gamma$, form the pull-back (see \cite{stallings}) and
discard the contractible components to get an immersion representing
the intersection.

\begin{example}
  Let $\F=\<a,b,c\>$, $A=\<a,b\>$, $B=\<a,cbc^{-1}\>$. Then $A$ and
  $B$ are free factors of $\F$, while their intersection is the free
  factor system consisting of two rank 1 free factors $\<a\>$ and
  $\<b\>$. The intersection of $A$ and $\<c\>$ is the empty free
  factor system. 
\end{example}

To see that the intersection $\mathcal F\cap\mathcal F'$ is a free
factor system, one can arrange that one of them is represented by
subgraphs of $\Gamma$ and then the pullback will be represented by
subgraphs of the other one. It is also possible to compute finite
intersections of free factor systems by a pull-back of several
immersions.

Finally, we write $\mathcal F<\mathcal F'$ if every group (representing 
a conjugacy class) in $\mathcal F$ is contained in a group in $\mathcal
F'$. For example, $\mathcal F\cap\mathcal F'<\mathcal F$.

\subsection{Tree of groups}
\def\F{{\mathcal F}} We now assume that $X$ is a core graph and is a
Standard Model. Thus $X$ is a tree $T$ with a root vertex $v$ and with
a loop attached at every vertex. We assign length 1 to each edge and
let $D_0:T\to [0,\infty)$ be the distance function from $v$. We extend
  $D_0$ to all of $X$ so that it is constant on each attached
  loop. Our first task is to control the sizes of maps, measured in
  $[0,\infty)$, representing
  elements of $H$, as well as homotopies, measured by $D_0$. So in
  effect we replace properness by metric control. Recall that for a
  finite subgraph $K\subset X$ we have a clopen subgroup $U_K<Maps(X)$,
  so $H\cap U_K$ is compact and has finite index in $H$.

 \begin{prop}\label{rs}
    Let $H<\maps(X)$ be a compact subgroup. There is a sequence of
    integers $0=r_0<r_1<r_2<\cdots$ and for every $n>0$ and every
    $[h]\in H$ there is a representative $h$ satisfying
\begin{enumerate}[(*)]
      \item $h$ maps
        every element of the closed cover $\mathcal
        C(r_1,r_2,\cdots,r_n)$ of $X$ consisting of the sets
        $$D_0^{-1}[r_0,r_1],D_0^{-1}[r_1,r_2],\cdots,D_0^{-1}[r_{n-1},r_n],D_0^{-1}[r_n,\infty)$$
          to the union of the same element with the one or two
          adjacent elements.
\end{enumerate}
\end{prop}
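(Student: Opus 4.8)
The plan is to construct the $r_i$ one at a time, using only compactness of $H$ together with the Homotopy Extension Theorem (Proposition~\ref{pasting_homotopies}). Write $B_r:=D_0^{-1}[0,r]$ and $E_r:=D_0^{-1}[r,\infty)$ (for integer $r$, $B_r$ is a finite subgraph). A first observation reformulates $(*)$: a proper map $h\colon X\to X$ carries each element of $\mathcal{C}(r_1,\dots,r_n)$ into the union of that element with its neighbours if and only if $h(B_{r_k})\subseteq B_{r_{k+1}}$ for $1\le k\le n-1$ and $h(E_{r_j})\subseteq E_{r_{j-1}}$ for $1\le j\le n$ (with the convention $r_0=0$, so the clause $j=1$ is vacuous); the first family says that the image of each bounded element does not reach past the next one, the second that the image of each element does not reach below the previous one. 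I will also use that any proper map representing a class $[h]\in H$ is a proper homotopy equivalence, being properly homotopic to one, hence extends to $\hat X$, and that $q\colon PHE(X)\to\maps(X)$ is open (Corollary~\ref{phe}).

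I prove by induction on $n\ge 0$ that one can choose $0=r_0<r_1<\dots<r_n$ so that every $[h]\in H$ has a representative satisfying $(*)$ for $\mathcal{C}(r_1,\dots,r_n)$; for $n=0$ there is nothing to do. Given $r_0,\dots,r_{n-1}$, for $[h]\in H$ let $\psi([h])$ be the least integer $M\ge r_{n-1}$ such that $[h]$ has a representative $h$ which satisfies $(*)$ for $\mathcal{C}(r_1,\dots,r_{n-1})$ and has $h(B_{r_{n-1}})\subseteq B_M$ and $h^{-1}(B_{r_{n-1}})\subseteq B_M$, where $h^{-1}(\cdot)$ denotes point-set preimage. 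By the inductive hypothesis such an $h$ exists, and, being proper, it makes $h(B_{r_{n-1}})$ and $h^{-1}(B_{r_{n-1}})$ compact; so $\psi$ is finite-valued. The crucial claim is that each set $\{[h]\in H:\psi([h])\le M\}$ is open. Granting it, these sets form an increasing open cover of the compact group $H$, hence $\psi\le M^\ast$ for some $M^\ast$; put $r_n:=M^\ast+1$. Then each $[h]$ has a representative $h$ satisfying $(*)$ for $\mathcal{C}(r_1,\dots,r_{n-1})$ with $h(B_{r_{n-1}})\subseteq B_{M^\ast}\subseteq B_{r_n}$ (the missing clause $k=n-1$), and with $h^{-1}(B_{r_{n-1}})\subseteq B_{M^\ast}\subsetneq B_{r_n}$, so that every point of $E_{r_n}$ is carried outside $B_{r_{n-1}}$, i.e. $h(E_{r_n})\subseteq E_{r_{n-1}}$ (the missing clause $j=n$). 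By the reformulation, $h$ satisfies $(*)$ for $\mathcal{C}(r_1,\dots,r_n)$, completing the induction.

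To see that $\{\psi\le M\}$ is open, I run the homotopy-extension argument of Lemma~\ref{UkOpen}. Fix $[h]$ with a witness $h$ for $\psi([h])=M_0\le M$, and choose a finite subgraph $L$ so large that $B_{M_0}\subseteq L$ and, using properness of $h$, that $h^{-1}\bigl(D_0^{-1}[0,r_{j-1}+1]\bigr)\subseteq L$ for every $j\le n$; thus $h$ sends $X\smallsetminus L$ into the \emph{open} set $D_0^{-1}(r_{j-1}+1,\infty)$ for each such $j$. If $\hat h'\in PHE(X)$ is close enough to $\hat h$, Proposition~\ref{pasting_homotopies} gives a proper homotopy from $h'$ to a map $h''$ with $h''|_L=h|_L$ and with all tracks of points of $X\smallsetminus L$ of diameter $<1$. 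Then $[h'']=[h']$; $h''$ agrees with $h$ on $L\supseteq B_{r_{n-1}}$, so it inherits the upper clauses of $(*)$ and the two ball conditions at scale $r_{n-1}$; and for $x\notin L$ the point $h''(x)$ lies within distance $1$ of $h(x)\in D_0^{-1}(r_{j-1}+1,\infty)$, hence $h''(x)\in E_{r_{j-1}}$, which with the $L$–part yields the lower clauses $h''(E_{r_j})\subseteq E_{r_{j-1}}$, and in the same way $h''^{-1}(B_{r_{n-1}})\subseteq B_{M_0}$. Hence $\psi([h'])\le M_0\le M$, and since $q$ is open the image of a neighbourhood of $\hat h$ is a neighbourhood of $[h]$ contained in $\{\psi\le M\}$.

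The one delicate point, where I expect the real work, is this openness step. The upper conditions $h(B_{r_k})\subseteq B_{r_{k+1}}$ survive any perturbation that agrees with $h$ on a large enough finite subgraph, but the conditions $h(E_{r_j})\subseteq E_{r_{j-1}}$ concern the non-compact ends and are only stable because one first exploits the properness of the chosen witness $h$ to push the offending preimages into $L$, after which the unit margin built into the reformulation absorbs the displacement caused by passing to $h''$; checking carefully that the homotopy $h'\to h''$ may be taken proper with arbitrarily small tracks outside a prescribed finite subgraph is exactly the bookkeeping already carried out in the proof of Lemma~\ref{UkOpen}.
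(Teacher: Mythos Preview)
Your proof is correct and follows essentially the same strategy as the paper: induct on $n$, and at each step use compactness of $H$ together with the openness of the set of classes admitting a representative with the desired control to find a uniform bound. The paper's argument is shorter because it invokes the clopen subgroups $U_K$ directly rather than re-running the homotopy extension argument: given a representative $h$ of $[h]$ satisfying $(*)$ for some candidate $r_{n+1}$, every class in the open coset $[h]\cdot U_{D_0^{-1}[0,r_{n+1}]}$ has a representative of the form $hu$ with $u$ fixing $D_0^{-1}[0,r_{n+1}]$ and preserving its complementary components, and one checks in one line that $hu$ inherits $(*)$. This replaces your hands-on openness proof for $\{\psi\le M\}$, and in particular sidesteps the bookkeeping about tracks near the frontier of $L$ (where your claim that tracks of points in $X\smallsetminus L$ have $D_0$-diameter $<1$ would need a bit more justification on frontier edges, though your deferral to the proof of Lemma~\ref{UkOpen} is reasonable).
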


  \begin{proof}
    We construct the numbers inductively, starting
    with $r_1=1$. Then (*) is
    vacuous. 

    Suppose that $r_n$ has been constructed satisfying (*). Note that
    by properness for every $[h]\in H$ (and every representative $h$
    that exists by induction) there is some $r_{n+1}>r_n$ so that
    $(*)$ holds for the cover $C(r_1,r_2,\cdots,r_n,r_{n+1})$ and this
    $h$. Moreover, the same $r_{n+1}$ will also work in a neighborhood
    of $[h]$ by choosing representatives of the form $hu$ where $[u]\in
    U_{D_0^{-1}[0,r_{n+1}]}$ i.e. $u$ fixes $D_0^{-1}[0,r_{n+1}]$ and
    leaves the complementary components invariant. Now by compactness
    of $H$, there is a finite cover of $H$ by such open sets and the
    maximal $r_{n+1}$ will then satisfy the requirements.
  \end{proof}

  It will be convenient to introduce the following notation. First,
  let $\rho:[0,\infty)\to [0,\infty)$ be a homeomorphism such that
      $\rho(r_n)=n$ for $n=0,1,\cdots$ and let $D=\rho D_0:X\to [0,\infty)$. Thus
      $D^{-1}([m,n])=D_0^{-1}([r_m,r_n])$. We think of $D$ as a
        ``control function''. For example, (*) says that for every $n$
        every element
        of $H$ has a representative $h$ that ``moves points $<2$'' 
        i.e. $|D(x)-D(h(x))|<2$ for every $x\in D^{-1}[0,n]$. The next
        proposition says that 
        homotopies ``move
        points $<3$''.

  \begin{prop}\label{homotopies}
    Let $0<r_1<r_2<\cdots$ be as in Proposition
    \ref{rs}. Fix $n$ and let $h,h'$ be the representatives of two
    elements of $H$ that are inverses of each other as in Proposition
    \ref{rs}. Then
    there is a proper homotopy between the identity and $h'h$ that
    moves each element of the cover $\mathcal C(r_1,\cdots,r_n)$ to the union of
    at most 5 elements, namely the 2-neighborhood of the given element.
  \end{prop}

  \begin{proof}
    First note that there is a canonical proper homotopy between the
    identity and $h'h$: lift the given homotopy to the universal cover
    extending the identity map, and then replace it by the straight
    line homotopy. We now argue that this homotopy moves within
    2-neighborhoods. Fix a component $P$ of an element of the cover
    and let $\tilde P$ be the component of the 2-neighborhood that
    contains it. Since a loop in $P$ cannot be mapped by $h'h$ disjointly
    (since otherwise $h'h$ would not be homotopic to the identity) we
    see that $h'h(P)\subseteq \tilde P$. By lifting to the covering
    space of $X$ corresponding to $\pi_1(\tilde P)$ and then
    retracting to the core $\tilde P$, we see that
    $h'h|P:P\to \tilde P$ is homotopic to inclusion
    $i:P\hookrightarrow\tilde P$ within $\tilde P$. Now note that any
    homotopy from $i$ to $h'h|P$ has tracks that are nullhomotopic loops
    (they have to represent $\pi_1$-elements that commute with
    $\pi_1(P)$, but since $\pi_1(P)$ and $\pi_1(\tilde P)$ are
    nonabelian free groups this forces these loops to be trivial). It
    follows that the tracks described by the straight line homotopy
    are homotopic to paths in $\tilde P$, but since they are immersed,
    they must be contained in $\tilde P$.
  \end{proof}

  If $J\subset [0,\infty)$ is a closed interval with integer
    endpoints, write $\mathcal F(J)$ for the free factor system
    represented by $D^{-1}(J)$. Thus the number of free factors in
    $\mathcal F(J)$ is equal to the number of components of
    $D^{-1}(J)$. When $J$ is a degenerate interval (a single integer
    point) then each factor in $\F(J)$ has rank 1. We denote by $|J|$
    the length of the interval.

  We also set $$\F'(J)=\cap_{h\in H}h_*(\F(J))$$
  where $h_*:\pi_1(X)\to\pi_1(X)$ is the automorphism induced by $h$
  (defined up to conjugation). This is really only a finite
  intersection since when $h$ is close to the identity we will have
  $h_*(\F(J))=\F(J)$, so it suffices to intersect over finitely many
  coset representatives. Thus $\F'(J)$ is an $H$-invariant free factor
  system.

When $J=[a,b]\subset [0,\infty)$ with integer endpoints and with
  $b-a\geq 4$ we set $J^+=[a-2,b+2]\cap [0,\infty)$ to be the
    2-neighborhood of $J$, and likewise $J^-=[a',b-2]$ where $a'=0$ if
    $a=0$ and otherwise $a'=a+2$ (so $J^-$ is obtained from $J$ by
    subtracting the 2-neighborhood of the complement).
    Note
    that by our assumptions on the sequence $r_n$ we have that
  $$\F(J^-)<\F'(J)<\F(J^+)$$
  We now show that each
  group in $\F'(J)$ either contains a group in $\F(J^-)$ or it has
  trivial intersection with all of them.

  \begin{lemma}\label{5.2}
    Let $A$ be a free factor in $\F'(J)$. If $A$ contains a nontrivial
    element $\alpha$ that also belongs to a free factor $B$ in $\F(J^-)$ then
    $B<A$ (up to conjugacy).
  \end{lemma}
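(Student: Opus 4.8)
The plan is to work topologically, representing all the relevant free factor systems by subgraphs of the standard model $X$, and to exploit the fundamental fact about graphs quoted already in the proof of Theorem \ref{id}: disjoint nontrivial loops are not freely homotopic, and a nullhomotopic loop bounds within its own image. First I would fix graph-of-groups pictures. The free factor system $\F(J^-)$ is literally carried by the subgraph $D^{-1}(J^-)\subset X$, whose components $P_1,\dots,P_k$ are the connected subgraphs realizing the factors appearing in $\F(J^-)$; in particular each $B$ in $\F(J^-)$ is $\pi_1(P_j)$ for a unique $j$, up to conjugacy. Since $\F(J^-)<\F'(J)<\F(J^+)$, every factor of $\F'(J)$ is conjugate into a factor of $\F(J^+)$, i.e. into $\pi_1(Q_i)$ for a component $Q_i$ of $D^{-1}(J^+)$, and each $P_j$ sits inside exactly one such $Q_i$. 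So after conjugating we may assume $A<\pi_1(Q_i)$ and $B=\pi_1(P_j)$ with $P_j\subset Q_i$ and $\alpha\in A\cap B$ nontrivial.

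Next I would identify $A$ geometrically. Because $A$ is a finite-rank free factor of $\pi_1(X)$, it is represented by a core subgraph; the real content is that, being in the $H$-invariant system $\F'(J)=\bigcap_h h_*\F(J)$, the subgroup $A$ is an ``intersection'' of conjugates of the standard subgraph groups. Using Stallings pullbacks (as in the Free factor systems subsection) one represents $A$ by an immersed finite core graph $\Gamma_A\to X$; the hypothesis $\alpha\in A\cap B$ says $\Gamma_A$ and $P_j$ share a common immersed loop representing $\alpha$. The key geometric step is then: \emph{an immersed loop representing $\alpha$ is essentially unique} — any two immersed loops freely homotopic to $\alpha$ in the graph $X$ have the same image (the immersed representative of a conjugacy class in a graph is unique up to cyclic reparametrization). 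Hence the immersed loop $\gamma_\alpha$ lies simultaneously in (the image of) $\Gamma_A$ and in $P_j$, so in particular $\gamma_\alpha\subset P_j\cap Q_i = P_j$.

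The heart of the argument is to promote ``$\Gamma_A$ meets $P_j$ in the loop $\gamma_\alpha$'' to ``$\Gamma_A$ contains all of $P_j$,'' which gives $B=\pi_1(P_j)<\pi_1(\Gamma_A)=A$ up to conjugacy. Here is where I expect the main obstacle. One cannot argue purely locally: a priori $\Gamma_A$ could meet $P_j$ in a proper subgraph carrying $\alpha$ but not all of $\pi_1(P_j)$. The resolution must use $H$-invariance crucially. The point is that $B=\pi_1(P_j)$ is itself a factor of the \emph{standard} system $\F(J^-)<\F(J)$, hence of $\F(J)$; and since $A$ lies in every $h_*\F(J)$, for each $h$ the factor $A$ is contained in (a conjugate of) some factor of $h_*\F(J)$, i.e. $A<h_*C_h$ for a factor $C_h$ of $\F(J)$. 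Now $\alpha\in A$ forces $h_*^{-1}(\alpha)\in C_h$; since $B=\pi_1(P_j)$ is already a factor of $\F(J)$ containing $\alpha$, and factors of $\F(J)$ containing a given nontrivial element are determined by which component of $D^{-1}(J)$ carries that element, we get $C_h\supseteq$ the $\F(J)$-factor through $\alpha$. I would then argue that the $\F(J)$-factor through $\alpha$ \emph{contains} $B$: indeed $B=\pi_1(P_j)$ with $P_j$ a component of $D^{-1}(J^-)\subset D^{-1}(J)$, so $P_j$ lies in a single component of $D^{-1}(J)$, whose $\pi_1$ is exactly that factor. Therefore $B<C_h$, hence $B=h_*^{-1}h_*B< h_*^{-1}(\text{conj of }C_h)$ — carefully, $A$ and $B$ both lie in the single $h_*\F(J)$-factor $h_*C_h$, for every $h$, and $A$, $B$ are factors of $\F'(J)=\bigcap_h h_*\F(J)$ and of $\F(J)$ respectively. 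Intersecting, both $A$ and $\pi_1(P_j)$ are contained in $\bigcap_h (\text{the }h_*\F(J)\text{-factor through }\alpha)$, which is a \emph{single} factor $A'$ of $\F'(J)$ — and $A$ is a factor of $\F'(J)$ containing $\alpha$, so $A=A'$, giving $B=\pi_1(P_j)<A'=A$. To make the last step rigorous I would use the topological pullback model once more: represent all of $\{h_*\F(J)\}$ by immersed finite graphs, take the common pullback to get $\F'(J)$, and observe that the component of the pullback carrying $\gamma_\alpha$ maps onto the component of each $D^{-1}(h\cdot J)$ carrying $\gamma_\alpha$, in particular onto $P_j$; that component's $\pi_1$ is the factor of $\F'(J)$ through $\alpha$, which equals $A$, and it visibly surjects $\pi_1(P_j)=B$, so $B<A$. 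The obstacle, as indicated, is bookkeeping the conjugacy classes so that ``the factor through $\alpha$'' is well-defined and that $A$, $B$, and all the intermediate factors can be simultaneously conjugated into position; once that is set up, the graph-theoretic uniqueness of immersed loops does the rest.
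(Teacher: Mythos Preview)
Your setup is right and you correctly identify the pullback model as the tool, but the argument breaks down in two places.

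First, in the algebraic middle section: from $A<h_*C_h$ and $\alpha\in A$ you correctly get $h_*^{-1}(\alpha)\in C_h$ (up to conjugacy), but then you write ``we get $C_h\supseteq$ the $\F(J)$-factor through $\alpha$.'' This does not follow: $C_h$ is the $\F(J)$-factor through $h_*^{-1}(\alpha)$, not through $\alpha$, and there is no reason these coincide. What you actually need is $B<h_*C_h$, and for that the relevant input is $\F(J^-)<h_*\F(J)$ for every $h$ (this is exactly the content of the inequality $\F(J^-)<\F'(J)$), together with malnormality: $B$ lies in \emph{some} factor of $h_*\F(J)$, and since $\alpha\in B\cap h_*C_h$, that factor must be $h_*C_h$.

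Second, your pullback paragraph has the arrow backwards. The projection from a pullback component to each immersed factor graph is an \emph{immersion}, hence $\pi_1$-injective; it does not ``surject'' anything, and in any case a surjection $A\twoheadrightarrow B$ would not give $B<A$ as subgroups. Moreover $P_j$ is a component of $D^{-1}(J^-)$, not of any graph representing $h_*\F(J)$, so it is not one of the projection targets.

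The paper's proof is exactly the clean version of what you are reaching for, and it is one sentence: since $\F(J^-)<\F'(J)$, the immersion representing $B$ \emph{lifts} to the pullback; by malnormality of free factor systems this lift is unique, so it must land in the component to which $\alpha$ lifts, namely $A$. The direction is ``$B$ lifts into the pullback,'' not ``the pullback projects onto $B$.''
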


  \begin{proof}
    Represent different $h_*(\F(J))$, $h\in H$, by immersions of
    finite (possibly disconnected) graphs into $X$. The non-tree
    components of the pull-back then represent the free factors in
    $\F'(J)$. Since free factors (and free factor systems) are
    malnormal, if an immersion to $X$ lifts to the pull-back, it does
    so uniquely. Since an immersion representing $B$ lifts, it must
    lift to the component representing $A$, since this is where
    $\alpha$ lifts.
  \end{proof}

  We now set $\F^*(J)$ to be the free factor system consisting of the
  free factors in $\F'(J)$ that contain a free factor in
  $\F(J^-)$. Thus we still have
  $$\F(J^-)<\F^*(J)<\F(J^+)$$
  and also $J\subset J'$ implies $\F^*(J)<\F^*(J')$.

  \begin{lemma}
    If $|J|\geq 8$ then $\F^*(J)$ is $H$-invariant. 
  \end{lemma}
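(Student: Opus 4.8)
The plan is as follows. Since $\F'(J)$ is already $H$-invariant and $\F^*(J)$ is the sub-collection of its (finitely many) factors that contain a factor of $\F(J^-)$, it is enough to show that $h_*(\F^*(J))\subseteq\F^*(J)$ for every $h\in H$; the reverse inclusion is then automatic, because $h_*$ restricts to an injection of the finite set $\F^*(J)$ into itself, hence to a bijection. Moreover, by Lemma \ref{5.2} a factor $A\in\F'(J)$ belongs to $\F^*(J)$ precisely when $A$ contains a nontrivial element that also lies in some free factor of $\F(J^-)$, and this is the form of the membership criterion I will verify for $h_*(A)$.

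Fix $h\in H$ and $A\in\F^*(J)$, write $J=[a,b]$, and choose $B\in\F(J^-)$ with $B<A$, represented by $\pi_1(P)$ for $P$ a component of $D^{-1}(J^-)$. Using Proposition \ref{rs} (in the reparametrized form), pick a representative of $h$ moving every point of $D^{-1}[0,n]$ by less than $2$ in $D$-value, where $n>b+2$. Here is the step where $|J|\ge 8$ is used: since $b-a\ge 8$, there is an integer $m$ with $[m-2,m+2]\subseteq J^-$ (namely $a+4\le m\le b-4$, or $2\le m\le b-4$ when $a=0$). As $X$ is a Standard Model, every vertex of its underlying tree lies on an infinite branch, so the component $P$ — which contains a vertex at the lower level of $J^-$ — also contains a vertex $w$ with $D(w)=m$; let $\alpha=[x_w]$ be the class of the loop attached at $w$. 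Then $\alpha$ is a nontrivial element of $\pi_1(X)$ conjugate into $\pi_1(P)=B$, hence into $A$.

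Now apply $h$. Because $x_w\subseteq D^{-1}(\{m\})$ and $h$ moves $D$-values by less than $2$, the loop $h(x_w)$ lies in $D^{-1}([m-2,m+2])\subseteq D^{-1}(J^-)$, and in particular in a single component $P'$ of $D^{-1}([m-2,m+2])$; hence $h_*(\alpha)$ is conjugate into $\pi_1(P')$. Let $P''$ be the component of $D^{-1}(J^-)$ containing $P'$; then $\pi_1(P')$ is a free factor of $\pi_1(P'')$, so $h_*(\alpha)$ is a nontrivial element of $h_*(A)$ that also belongs to the free factor $\pi_1(P'')\in\F(J^-)$. Since $h_*(A)\in\F'(J)$ by $H$-invariance of $\F'(J)$, Lemma \ref{5.2} applied to $h_*(A)$ yields $\pi_1(P'')<h_*(A)$, so $h_*(A)\in\F^*(J)$, as needed.

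The genuinely delicate part is the bookkeeping behind the constant $8$: one $2$-unit buffer is already built into the sandwich $\F(J^-)<\F'(J)<\F(J^+)$ (it records that a controlled map sends $D^{-1}(J^-)$ into $D^{-1}(J)$), and a second $2$-unit buffer is needed so that the $h$-image of the chosen loop — which may drift two levels — still lands in $D^{-1}(J^-)$, hence inside an $\F(J^-)$-factor; together these force $b-a\ge 8$. The remaining ingredients (that a loop contained in a subgraph represents a class conjugate into the $\pi_1$ of one of its components, that $\pi_1$ of a connected subgraph is a free factor, and that a Standard-Model component spanning a range of levels contains vertices at every level in that range) are routine.
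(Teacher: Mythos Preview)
Your proof is correct and follows essentially the same route as the paper's: choose a loop $x_w$ inside the component $P$ realizing $B<A$ at a level $m$ with $[m-2,m+2]\subseteq J^-$, use the $<2$ displacement of a controlled representative of $h$ to land $h(x_w)$ in $D^{-1}(J^-)$, and invoke Lemma \ref{5.2} together with the $H$-invariance of $\F'(J)$ to conclude $h_*(A)\in\F^*(J)$. Your write-up spells out a few points the paper leaves implicit (why such a level $m$ exists in the Standard Model, and why one-sided inclusion suffices), but the argument is the same.
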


  \begin{proof}
    Take a free factor $A$ in $\F^*(J)$. It will
    contain an element $\alpha$ corresponding to a loop in $D^{-1}(t)$
    for any $t\in J$ whose distance to each endpoint is $\geq 4$. For
    any $h\in H$ we have $h_*(\alpha)$ is an element in a free factor
    of $\F([t-2,t+2])<\F(J^-)$, and the free factor of the latter that
    contains it is contained in a free factor $B$ of $\F^*(J)$ by
    Lemma \ref{5.2}, and
    $h_*(A)=B$. 
  \end{proof}

Now fix a sequence of intervals $J_1,J_2,\cdots$ that cover
$[0,\infty)$ and so that $J_n\cap J_m=\emptyset$ when $|n-m|>1$ and
  $J_{n,n+1}:=J_n\cap J_{n+1}$ is an interval of length $\geq 22$
  for
  $i=1,2,\cdots$. Now construct the following tree of groups $\mathcal
  T$. The vertices of the tree are the free factors in $\F(J_n)$ (or
  equivalently the components of $D^{-1}(J_n)$), $n=1,2,\cdots$. The
  group associated to a vertex is the underlying free factor. The
  edges are the free factors in $\F(J_{n,n+1})$ (components of
  $D^{-1}(J_{n,n+1})$), again with the associated group the
  underlying free factor. Incidence relation is inclusion. The
  underlying graph is a tree, the nerve of the cover of $X$ by the
  components of $D^{-1}(J_n)$, $n\geq 1$.

  \def\T{{\mathcal T}}
  \begin{lemma}
    $\pi_1(\mathcal T)\cong \pi_1(X)$.
  \end{lemma}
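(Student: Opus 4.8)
The plan is to apply the Seifert--van Kampen theorem to the closed cover of $X$ by the connected components of the subgraphs $D^{-1}(J_n)$, $n\ge 1$, and to recognize the resulting iterated amalgam as $\pi_1(\mathcal{T})$. First I would reduce to a finite problem. Assume the $J_n$ are numbered in the obvious order, so $J_1\ni 0$ and $J_1\cup\dots\cup J_k=[0,b_k]$ where $b_k=\max J_k$; set $X_k=D^{-1}([0,b_k])$. The $X_k$ are connected subgraphs exhausting $X$, so $\pi_1(X)=\varinjlim_k\pi_1(X_k)$ (basepoint the root $v$). Symmetrically, let $\mathcal{T}_k\subseteq\mathcal{T}$ be the sub-tree-of-groups spanned by the vertices coming from $\F(J_1),\dots,\F(J_k)$; then $\pi_1(\mathcal{T})=\varinjlim_k\pi_1(\mathcal{T}_k)$. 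Hence it suffices to produce isomorphisms $\pi_1(X_k)\cong\pi_1(\mathcal{T}_k)$ that are induced by subgraph inclusions, and so are compatible with the two direct systems.

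For fixed $k$, consider the cover of $X_k$ by the components $P$ of $D^{-1}(J_n)$, $n\le k$. The combinatorial input needed is: (a) two such components are disjoint unless they come from the same $J_n$ (and are then equal) or from consecutive $J_n,J_{n+1}$; (b) if $P\subseteq D^{-1}(J_n)$ and $Q\subseteq D^{-1}(J_{n+1})$ meet, then $P\cap Q$ is a single component $E$ of $D^{-1}(J_{n,n+1})$; and (c) the nerve of this cover is a tree, with vertex set the components of the $D^{-1}(J_n)$, edge set the components of the $D^{-1}(J_{n,n+1})$, and incidence given by inclusion --- that is, the nerve is exactly the underlying graph of $\mathcal{T}_k$. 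Granting (a)--(c), since each $\pi_1(P)$ is a free factor of $\pi_1(X)$ by the definition of $\F(J_n)$, and subgraph inclusions are $\pi_1$-injective, the graph of groups on this nerve with vertex groups $\pi_1(P)$ and edge groups $\pi_1(E)$ \emph{is} $\mathcal{T}_k$. Writing $Y_v$ for the subgraph corresponding to a vertex $v$, one computes $\pi_1(X_k)$ by peeling leaves off the finite tree: removing a leaf $v$ with neighbour $w$ and edge $e$, the union $Z$ of the remaining vertex subgraphs meets $Y_v$ precisely in $Y_e$ by (a), and $Y_e$ is connected by (b); so, after thickening to open regular neighbourhoods, van Kampen gives $\pi_1(X_k)\cong\pi_1(Z)*_{\pi_1(Y_e)}\pi_1(Y_v)$, which is exactly the recursion computing $\pi_1(\mathcal{T}_k)$. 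Induction on the number of vertices, with base case a single connected subgraph, finishes the finite case.

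The heart of the matter --- and the only step requiring genuine care --- is establishing (b) and (c). Both follow from the description of the pieces in terms of the underlying tree $T$: since $D$ is a reparametrized distance-from-root function, a component of $D^{-1}(J_n)$ is the subtree of $T$ hanging below a single vertex, truncated to a band of levels, together with the loops attached to its vertices, and the overlap band $J_{n,n+1}$ sits at the \emph{root end} of $J_{n+1}$. Hence $Q\cap D^{-1}(J_{n,n+1})$ contains the root vertex of $Q$ and is therefore connected, i.e. a single component of $D^{-1}(J_{n,n+1})$; since that piece is connected and lies in $D^{-1}(J_n)$, it is contained in one component of $D^{-1}(J_n)$, which gives (b). For (c), connectedness of the nerve follows from connectedness of $X_k$, while acyclicity follows from the same observation: in a putative cycle, a vertex $P$ of maximal level $N$ would be joined by two distinct edges to level $N-1$, hence would contain two distinct components of $D^{-1}(J_{N-1,N})$, contradicting the fact that $P\cap D^{-1}(J_{N-1,N})$ --- which contains the root of $P$ --- is connected. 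The remaining ingredients (the exhaustions of $X$ and $\mathcal{T}$, the direct-limit computation of $\pi_1$, $\pi_1$-injectivity of subgraph inclusions, and the leaf-peeling form of van Kampen) are all standard.
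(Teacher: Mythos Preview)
Your proof is correct and follows essentially the same approach as the paper: the paper's one-sentence proof (``By induction, the subtree of groups corresponding to the first $n$ intervals has the fundamental group of the corresponding subgraph of $X$'') is exactly your exhaustion-plus-van-Kampen argument, stated without the details. Your verification of (b) and (c) and the leaf-peeling induction simply make explicit what the paper leaves to the reader.
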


  \begin{proof}
    By induction, the subtree of groups corresponding to the
    first $n$ intervals has the fundamental group of the corresponding
    subgraph of $X$.
  \end{proof}

  In a similar way we construct a tree of groups $\T^*$, which will be
  $H$-invariant. A vertex of {\it height $n$} is a free factor 
  in $\F^*(J_n)$, with this factor as the vertex group. An edge of {\it
    height $[n,n+1]$} is a free
  factor in $\F^*(J_{n,n+1})$, with this factor as the edge
  group. Such a factor is contained in a unique vertex group at height
  $n$ and a unique vertex group at height $n+1$ by Lemma \ref{5.2} and
  this gives incidence and edge-to-vertex inclusions. Thus $\T^*$ is a
  graph of groups and it is $H$-invariant by construction. Below we
  will show that $\T^*$ is a tree and $\pi_1(\T^*)\cong \pi_1(\T)$.

  \begin{lemma} 
    If $C$ is an edge group in $\T^*$ with $A,B$ the incident vertex
    groups of heights $n,n+1$ resp., then $C$ is one of the free
    factors in $A\cap B$. 
  \end{lemma}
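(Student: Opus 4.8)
The plan is to reduce the claim to a statement about subgraphs of $X$ and then apply the pull-back description of intersections of free factor systems. Recall that $C$, being an edge group in $\T^*$, is a free factor in $\F^*(J_{n,n+1})$, hence (by Lemma \ref{5.2} and the definition of $\F^*$) it contains some free factor of $\F(J_{n,n+1}^-)$, where $J_{n,n+1}=J_n\cap J_{n+1}$. The key geometric observation is that $J_{n,n+1}\subset J_n$ and $J_{n,n+1}\subset J_{n+1}$, and moreover $J_{n,n+1}^+\subset J_n$ and $J_{n,n+1}^+\subset J_{n+1}$ provided the overlap $J_{n,n+1}$ is long enough — here the hypothesis $|J_{n,n+1}|\geq 22$ gives ample room, since $J_{n,n+1}^+$ only enlarges by $2$ on each side while $J_n$ extends past $J_{n,n+1}$ by a definite amount in the direction away from $J_{n+1}$, and similarly for $J_{n+1}$. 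Consequently $\F^*(J_{n,n+1})<\F(J_{n,n+1}^+)<\F(J_n)$ and $<\F(J_{n+1})$, and the same monotonicity (using that $\F^*$ is monotone in $J$) gives $\F^*(J_{n,n+1})<\F^*(J_n)$ and $\F^*(J_{n,n+1})<\F^*(J_{n+1})$.

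First I would make precise that $C<A$ and $C<B$ up to conjugacy. By the incidence relation in $\T^*$, the vertex groups $A$ and $B$ are \emph{defined} to be the unique free factors of $\F^*(J_n)$ and $\F^*(J_{n+1})$ respectively that contain $C$; uniqueness is exactly the content of Lemma \ref{5.2} (distinct free factors in a free factor system are malnormal, so a nontrivial element lies in at most one of them). So $C<A$ and $C<B$, and therefore $C< A\cap B$ as subgroups of $\pi_1(X)$, where $A\cap B$ denotes the free factor system obtained by intersecting these two free factors. It remains to see that $C$ is actually one of the factors appearing in $A\cap B$, i.e. that $C$ is \emph{equal} (up to conjugacy) to a component of the intersection, not merely contained in one.

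The heart of the argument is therefore to rule out that $C$ is properly contained in a larger free factor $C'$ of $A\cap B$. I would argue this at the level of the core graphs / immersed subgraphs. Represent $A$ and $B$ by immersions of finite graphs $\Gamma_A\to X$ and $\Gamma_B\to X$; the non-contractible components of the pull-back $\Gamma_A\times_X\Gamma_B$ represent the free factors of $A\cap B$. Now $C$ is the free factor in $\F^*(J_{n,n+1})$ it is; since $\F^*(J_{n,n+1})<\F^*(J_n)<A$ (after choosing $A$) and likewise $<B$, an immersion $\Gamma_C\to X$ representing $C$ lifts — uniquely, by malnormality — to both $\Gamma_A$ and $\Gamma_B$, hence to a single component $P$ of the pull-back; that component represents a free factor $C'$ of $A\cap B$ with $C<C'$. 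To see $C'=C$: the component $C'$ of $A\cap B$, being a free factor of $\F(J_n^+)\cap\F(J_{n+1}^+)$, is carried by the subgraph $D^{-1}(J_n^+)\cap D^{-1}(J_{n+1}^+)$, which, because $J_n\cap J_m=\emptyset$ for $|n-m|>1$ and by the interval combinatorics, is contained in $D^{-1}(J_{n,n+1}^{++})$ for a controlled enlargement $J_{n,n+1}^{++}$ of $J_{n,n+1}$ still well inside both $J_n$ and $J_{n+1}$; hence $C'\in\F(\text{something between }J_{n,n+1}^- \text{ and } J_{n,n+1}^{+})$, and since $C\supset$ a factor of $\F(J_{n,n+1}^-)$ while $C'$ lies in $\F'(J_{n,n+1})$-territory, applying Lemma \ref{5.2} once more with the roles reversed forces $C'$ to be one of the factors of $\F^*(J_{n,n+1})$ containing that same $\F(J_{n,n+1}^-)$-factor; by uniqueness of such a factor (malnormality) $C'=C$.

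The step I expect to be the main obstacle is the last one: keeping the bookkeeping of the nested intervals $J_{n,n+1}^-\subset J_{n,n+1}\subset J_{n,n+1}^+$ versus $J_n^+, J_{n+1}^+$ straight, and verifying that the subgraph carrying $A\cap B$ genuinely sits inside $D^{-1}(J_{n,n+1}^+)$ — this is where the numerical hypotheses ($|J_{n,n+1}|\geq 22$, the $\pm 2$-neighborhoods, and $J_n\cap J_m=\emptyset$ for $|n-m|>1$) all have to be used simultaneously. Once the subgraph-containment is pinned down, the identification $C'=C$ is a second application of Lemma \ref{5.2}, and the rest is the malnormality/uniqueness argument already used in defining the incidence relation of $\T^*$.
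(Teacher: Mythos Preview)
Your overall architecture is right: show that $C$ sits inside some factor $C'$ of $A\cap B$, and then argue $C'=C$ because both lie in the same free factor system. But the route you take to the second step has a gap, and the paper's argument is both shorter and avoids it.

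The gap is in your use of the $J^+$ bounds. From $A<\F(J_n^+)$ and $B<\F(J_{n+1}^+)$ you correctly get $C'<\F(J_n^+\cap J_{n+1}^+)=\F(J_{n,n+1}^+)$. But this only says that $C'$ is \emph{contained in} a factor of $\F(J_{n,n+1}^+)$; it does \emph{not} say that $C'$ is a factor of $\F'(J_{n,n+1})=\cap_h h_*(\F(J_{n,n+1}))$. Your phrase ``$C'$ lies in $\F'(J_{n,n+1})$-territory'' papers over exactly this, and Lemma~\ref{5.2} cannot repair it: that lemma takes as hypothesis that the factor already lies in $\F'(J)$, which is what you are trying to prove. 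So as written, the final identification $C'=C$ is not justified.

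The paper's proof sidesteps all of this by using the sharper information $A\in\F'(J_n)$, $B\in\F'(J_{n+1})$ (rather than the looser $\F(J^+)$ bounds). Since $\F'(J_n)\cap\F'(J_{n+1})=\bigcap_h h_*(\F(J_n))\cap\bigcap_h h_*(\F(J_{n+1}))=\bigcap_h h_*\bigl(\F(J_n)\cap\F(J_{n+1})\bigr)=\F'(J_{n,n+1})$, every factor of $A\cap B$ is literally a factor of $\F'(J_{n,n+1})$. Now $C$ is also a factor of $\F'(J_{n,n+1})$ (since $\F^*\subset\F'$), and two factors of the same free factor system with $C<C'$ must coincide. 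No interval arithmetic with $J^\pm$, no second appeal to Lemma~\ref{5.2}, and no numerical hypotheses are needed at this point.
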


  \begin{proof}
    We have that $C$ is contained in some group in $A\cap B$ by
    construction. The free factor $A$ is a free factor in the free
    factor system $\F'(J_n)$ that contains a factor in $\F(J_n^-)$ and
    similarly for $B$. The intersection $A\cap B$ consists of free factors in
    $\F'(J_{n,n+1})$ and one of them contains $C$, which is also a free factor
    in $\F'(J_{n,n+1})$, so equality holds.
  \end{proof}

  Note here that in principle the intersection of $A$ and $B$ can
  consist of several free factors, i.e. the vertices might be
  joined by several edges. We will rule this out in Lemma \ref{5.8}.

  There is a natural morphism
  (vertices to vertices and edges to edges) $\pi:\T^*\to\T$ that sends a
  factor in $\F^*(J_i)$ to the factor in $\F(J_i)$ that contains it,
  and similarly for the edges. Note that we have a height function on
  both trees (sending factors in $\T^*(J_i)$, respectively in
  $\F(J_i)$ to $i$) that commutes with this map.

  In the sequel it will be convenient to abuse the terminology and
  conflate a subcomplex of $X$ and its fundamental group, and likewise a
  ``component'' and a ``free factor'' in a free factor system. 

  \begin{lemma} 
    Every vertex of $\T^*$ at height $n+1>0$ is connected by an edge to a vertex
    at height $n$. There is a unique vertex of $\T^*$ at height
    0. In particular, $\T^*$ is connected. 
  \end{lemma}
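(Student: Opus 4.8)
The plan is to deduce both assertions from one elementary fact about a free factor system $\F$ of $\pi_1(X)$: a nontrivial conjugacy class is carried by at most one of the factors of $\F$. (Represent $\F$ by pairwise disjoint subgraphs of a marked graph; the immersed loop representing the conjugacy class lifts into at most one of them --- the malnormality already invoked in Lemma~\ref{5.2}.) I will refer to this as ``uniqueness of the carrying factor''. For the vertex of least height (the ``height $0$'' vertex of the statement): the initial interval $J_1$ contains $0$, so $D^{-1}(J_1)$ and $D^{-1}(J_1^-)$ are connected, being $D$-balls about the root $v$; hence $\F(J_1)$ and $\F(J_1^-)$ each consist of a single free factor, and the single factor $A^-$ of $\F(J_1^-)$ contains the loop $x_v$ at $v$, so it is nontrivial. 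Every factor of $\F^*(J_1)$ contains a conjugate of $A^-$ and therefore carries $[x_v]$; by uniqueness of the carrying factor they all coincide, so $\F^*(J_1)$ is a single free factor.

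For the descent, let $A$ be a vertex of $\T^*$ above the least height, so $A$ is a free factor in $\F^*(J_{n+1})$ for some $n\ge 1$ and $J_{n+1}$ is not the initial interval. From the choice of the cover $\{J_i\}$ it is immediate that $J_{n,n+1}=J_n\cap J_{n+1}$ has the same left endpoint as $J_{n+1}$ and that $J_{n,n+1}\subsetneq J_{n+1}$; consequently $m:=\min J_{n+1}^-=\min J_{n,n+1}^-$, and $\F^*(J_{n,n+1})<\F^*(J_{n+1})$. By definition of $\F^*$, the factor $A$ contains a factor $A^-$ of $\F(J_{n+1}^-)$, carried by a component $P$ of $D^{-1}(J_{n+1}^-)$. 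I would first verify the geometric point that $P$ meets its bottom level $m$: from any point of $P$, follow the reduced path toward $v$; along it $D$ decreases monotonically, and the sub-path until $D$ first equals $m$ stays in $D^{-1}(J_{n+1}^-)$, hence in the component $P$. Since $m$ is an integer, a point of $P$ of level $m$ lies on, or at the base of, a loop $x_w$ at some vertex $w\in P$, and $x_w\subseteq P\cap D^{-1}(m)$; thus $A^-=\pi_1(P)$, and so $A$, carries $[x_w]$. On the other hand $m\in J_{n,n+1}^-$, so $x_w$ also lies in $D^{-1}(J_{n,n+1}^-)$; hence $[x_w]$ is carried by a factor of $\F(J_{n,n+1}^-)$, and, since $\F(J_{n,n+1}^-)<\F^*(J_{n,n+1})$, by a factor $C$ of $\F^*(J_{n,n+1})$ --- an edge of $\T^*$ of height $[n,n+1]$. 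Finally, $\F^*(J_{n,n+1})<\F^*(J_{n+1})$ places $C$ inside a unique factor $\tilde A$ of $\F^*(J_{n+1})$; since $C$ carries $[x_w]$ and $C\subseteq\tilde A$, so does $\tilde A$, and as $A$ carries $[x_w]$ too, uniqueness of the carrying factor gives $\tilde A=A$. Hence the edge $C$ is incident to $A$, and its other endpoint is a vertex one level down.

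Combining the two, from any vertex of $\T^*$ one descends along edges, dropping a level at each step, until reaching the unique vertex of least height; since $\T^*$ is a graph this makes it connected. The step I expect to require the most care is the geometric one --- that a component $P$ of $D^{-1}(J^-)$ genuinely attains its bottom level and does so at a vertex carrying an honest loop --- since this is exactly where one uses that $X$ is a Standard Model core graph, a tree rooted at $v$ with a loop at every vertex, on which $D$ is a reparametrisation of the distance from $v$. Everything else is bookkeeping with the combinatorics of $\{J_i\}$ --- chiefly that $J_{n+1}$ and $J_n\cap J_{n+1}$ share a left endpoint, so that $J_{n+1}^-$ and $J_{n,n+1}^-$ do too --- together with repeated use of uniqueness of the carrying factor.
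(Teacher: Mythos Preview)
Your argument is correct and takes essentially the same route as the paper: both hinge on the observation that $J_{n+1}^-$ and $J_{n,n+1}^-$ share their left endpoint, so every component of $D^{-1}(J_{n+1}^-)$ meets $D^{-1}(J_{n,n+1}^-)$ (you phrase this as the component reaching its bottom level $m$ and singling out a loop $x_w$ there, the paper as the component containing a unique component of $D^{-1}(J_{n,n+1}^-)$). Your ``uniqueness of the carrying factor'' is exactly the malnormality underlying Lemma~\ref{5.2}, and if anything you are more explicit than the paper about why the resulting edge is incident to $A$.
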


  \begin{proof}
    Suppose the vertex is $A$, so it contains (possibly more than one)
    free factor $B$ in $\F(J^{-}_{n+1})$. Every component of
    $D^{-1}(J^-_{n+1})$ contains a (unique) component of
    $D^{-1}(J_{n,n+1}^-)$ and this component is contained in a unique
    free factor of $\F^*(J_{n,n+1})$, which represents an edge at
    height $[n,n+1]$ attached to $A$.

    Since $D^{-1}(J_1^{-})$ is connected (recall that $J_1^-$ contains
    $\{0\}$) and every vertex at height 0
    must contain a component of it, it follows that there is only one
    height 0 vertex in $\T^*$.
  \end{proof}

  Note that a vertex at height $n$ may not be connected to any
  vertices at height $n+1$ since a component of $D^{-1}(J^-_{n})$ may
  not contain any components of
  $D^{-1}(J_{n,n+1}^-)$.

  \begin{lemma}\label{5.8}
    Let $e$ be an edge in $\T$ with height in $[n,n+1]$ and consider
    its preimage $\pi^{-1}(e)$ in $\T^*$. After removing isolated
    vertices from $\pi^{-1}(e)$, it is a tree with one vertex $w$ at
    height $n$ and all other vertices at height $n+1$, and these are
    all connected to $w$ by a unique edge. In particular, $\T^*$ is a
    tree.
  \end{lemma}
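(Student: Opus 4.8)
The plan is to analyze the preimage $\pi^{-1}(e)$ of an edge $e$ of $\T$ of height $[n,n+1]$ by tracking which components of the relevant subgraphs of $X$ lie inside which free factors. Let $e$ correspond to a component $P$ of $D^{-1}(J_{n,n+1})$; let $A_0$ and $B_0$ be the components of $D^{-1}(J_n)$ and $D^{-1}(J_{n+1})$ that contain $P$, so $\pi(e)=e$, $\pi$ maps each vertex of $\pi^{-1}(e)$ at height $n$ into $A_0$ and each vertex at height $n+1$ into $B_0$. A vertex of $\pi^{-1}(e)$ at height $n$ is a free factor in $\F^*(J_n)$ contained in $A_0$ whose intersection with some height-$(n+1)$ vertex inside $B_0$ is an edge group mapping to $e$; by the previous lemma (the ``$C$ is a free factor in $A\cap B$'' statement) such an edge group is in particular a free factor in $\F^*(J_{n,n+1})$ contained in $P$.

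First I would show there is at most one vertex $w$ of $\pi^{-1}(e)$ at height $n$. A free factor in $\F^*(J_n)$ contained in $A_0$ and incident to an edge above $e$ must contain a free factor in $\F^*(J_{n,n+1})$ that is contained in $P$; but by the construction of $\F^*$, each such factor contains a free factor in $\F(J_{n,n+1}^-)$, hence (Lemma 5.2) contains the \emph{unique} component $Q$ of $D^{-1}(J_{n,n+1}^-)$ that lies in $P$ — here $|J_{n,n+1}|\ge 22$ guarantees $J_{n,n+1}^-$ is a genuine subinterval and $Q$ is nonempty and connected. So any height-$n$ vertex of $\pi^{-1}(e)$ contains (a conjugate of) $\pi_1(Q)$; since distinct free factors in the free factor system $\F^*(J_n)$ have, up to conjugacy, trivial intersection, there is at most one such vertex, call it $w$ (and it exists, since $Q$ sits inside a component of $D^{-1}(J_n^-)$, hence inside some factor of $\F^*(J_n)$, which is inside $A_0$). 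The same argument with $n$ replaced by $n+1$ does \emph{not} collapse the height-$(n+1)$ vertices, because several components of $D^{-1}(J_{n,n+1}^-)$ inside $B_0$ can occur — so in general there are several height-$(n+1)$ vertices over $e$, but each of them contains exactly one such $Q$, and that $Q$ determines a single edge group over $e$, so each height-$(n+1)$ vertex of $\pi^{-1}(e)$ is joined to $w$ by exactly one edge and to no other vertex of $\pi^{-1}(e)$. This shows $\pi^{-1}(e)$, after deleting isolated vertices, is a star with center $w$, hence a tree.

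Finally I would deduce that $\T^*$ is a tree. We already know from the preceding lemma that $\T^*$ is connected, and the morphism $\pi:\T^*\to\T$ preserves height. If $\T^*$ contained an embedded cycle, the image under $\pi$ (a height-preserving edge-path in the tree $\T$) would have to backtrack, i.e. there would be two distinct edges of the cycle mapping to the same edge $e$ of $\T$ and sharing a vertex; but two distinct edges over $e$ sharing a vertex is exactly what the star description of $\pi^{-1}(e)$ forbids (distinct edges over $e$ meet only at the common center $w$, and they are distinguished by their height-$(n+1)$ endpoints). Hence no embedded cycle exists and $\T^*$ is a tree; since $\pi$ then maps the tree $\T^*$ onto the tree $\T$ while being a bijection on the edges over each edge's center data and, by the connectivity lemma, induces an isomorphism of fundamental groups (which are both $\pi_1(X)$) when one builds them up interval by interval, one also recovers $\pi_1(\T^*)\cong\pi_1(\T)$.

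I expect the main obstacle to be the bookkeeping in the uniqueness step: carefully arguing that every free factor over $e$ (vertex group at height $n$ \emph{or} edge group) contains the \emph{same} distinguished subgraph $Q\subset D^{-1}(J_{n,n+1}^-)$, so that malnormality of free factor systems forces the collapse to a single height-$n$ vertex. This requires using Lemma 5.2 once for the containment of $Q$ in the edge group and once more for the containment in the vertex group, together with the length bound $|J_{n,n+1}|\ge 22$ to ensure $J_{n,n+1}^-$, $J_{n,n+1}^{--}$ etc. are all nondegenerate so that the needed loops exist.
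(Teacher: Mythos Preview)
Your plan has a real gap at the step where you claim each height-$(n+1)$ vertex of $\pi^{-1}(e)$ is joined to $w$ by a unique edge. First, a smaller point: the component $Q$ of $D^{-1}(J_{n,n+1}^-)$ inside $P$ is \emph{not} unique. If $J_{n,n+1}=[a,b]$, then $P$ is rooted at a depth-$a$ vertex and typically has many depth-$(a+2)$ descendants, each giving a distinct component of $D^{-1}(J_{n,n+1}^-)$. The correct observation (and this is what the paper uses) is that all of these $Q$'s lie in the \emph{same} component of $D^{-1}(J_n^-)$, because they all descend from the same ancestor at the left endpoint of $J_n^-$; Lemma~5.2 then forces a unique height-$n$ vertex. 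So your height-$n$ conclusion is right, but for the wrong reason.

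The serious problem is on the height-$(n+1)$ side. You assert that each height-$(n+1)$ vertex $B$ over $e$ ``contains exactly one such $Q$,'' but you give no argument, and this is exactly the content of the lemma. Concretely: suppose $B\in\F^*(J_{n+1})$ contains two components $Q_1,Q_2\subset D^{-1}(J_{n,n+1}^-)$ inside $P$. Why should $Q_1$ and $Q_2$ lie in the same free factor of $\F^*(J_{n,n+1})$? Nothing you have written rules out two distinct edge groups $C_1\supset Q_1$ and $C_2\supset Q_2$ in $\F^*(J_{n,n+1})$, both contained in $B$, giving a double edge from $B$ to $w$. The difficulty is that $\F^*(J_{n+1})$ is defined by intersecting $H$-translates of $\F(J_{n+1})$, and a ``witness'' (say a barbell immersion) that $Q_1,Q_2$ lie in the same $\F^*(J_{n+1})$-factor may wander through $D^{-1}(J_{n+1}\smallsetminus J_{n,n+1})$ under the $H$-action; it does not automatically give a witness that stays in $D^{-1}(J_{n,n+1})$. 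The paper handles this with a genuine surgery argument: one takes such a barbell, analyzes the $D$-size of its vanishing paths under $hq$ using the metric control from Propositions~\ref{rs} and~\ref{homotopies} (this is where $|J_{n,n+1}|\ge 22$ is actually used), and then replaces the arcs that exit $D^{-1}[0,k+1]$ by short loops to produce a new barbell whose $H$-images are homotopic into $D^{-1}(J_{n,n+1})$. Without this step, or something equivalent, the star description of $\pi^{-1}(e)$ is not established and the proof does not go through.
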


  \begin{proof}
    Let $J=J_{n,n+1}$. The statement that all edges in the preimage of
    $e$ have the same vertex at height $n$ follows from the following
    fact. If two components of $D^{-1}(J^-)$ are contained in the same
    component of $D^{-1}(J)$ then they are contained in the same
    component of $D^{-1}(J_n^{-})$ (and this is not true if $J_n^-$ is
    replaced by $J_{n+1}^-$ and there may be several vertices at
    height $n+1$).

    We now argue that the height $n+1$ vertices of all these edges in
    the preimage of $e$ are
    distinct. Fix some integer $k\in J$ at
    distance $\geq 9$ from the endpoints and let $x,x'$ be two loops
    in $D^{-1}(J)$ that map to $k$. They will lift to unique components of
    $\F^*(J)$ and any two components are determined in this
    way. If they lift to the same component of $\F^*(J_{n+1})$ then
    there is an immersion $q:\Gamma\to D^{-1}(J_{n+1})$ of a barbell (two
    disjoint loops connected by an edge) sending one loop to $x$ and
    the other to $x'$ and so that $hq$ can be homotoped into
    $D^{-1}(J_{n+1})$ for every $h\in H$. Thus $q$ is kind of a
    ``witness'' that $x,x'$ lift to the same component of
    $\F^*(J_{n+1})$. We need a similar witness that they lift to the
    same component of $\F^*(J)$. The map $q$ may not work, since its
    image may contain points of $D^{-1}(J_{n+1}\smallsetminus J)$, and we will
    perform a kind of surgery on $q$ to get a better map. 

    Fix $h\in H$. By perturbing if necessary we may assume that $hq$
    doesn't collapse any edges and is simplicial with respect to
    suitable subdivisions. Then the statement that $hq$ can be
    homotoped into $\F(J_{n+1})$ is equivalent to saying that after
    folding and replacing $hq$ by an immersion, the image of the core
    subgraph is contained in $D^{-1}(J_{n+1})$. This same $q$ may not
    map into $D^{-1}(J)$ since it may map around loops in
    $D^{-1}(J_{n+1})\smallsetminus D^{-1}(J)$, so we will modify it to
    $q':\Gamma'\to D^{-1}(J)$. First
    we analyze $q$.

    Recall that a {\it
      vanishing path} for $hq$ is an immersion $\nu:I\to\Gamma$ such
    that $hq\nu:I\to X$ is a nullhomotopic closed path. There are only
    finitely many maximal vanishing paths and the folding process can
    be thought of as folding maximal vanishing paths one at a time. We
    now claim that $hq\nu$ has $D$-size $<10$ (i.e. $\diam
    Im(Dhq\nu)<10$) when $h$ is as in Proposition \ref{rs} (see Figure \ref{Vanishing_paths_figure} for an illustration). 
Indeed,
    $h'hq\nu$ is also a closed nullhomotopic path (where $h'$ is as in
    Proposition \ref{homotopies})
    and there is a
    homotopy of $h'hq\nu$ to $q\nu$ that moves the endpoints by $<3$
    measured by $D$. Thus the immersed path $q\nu$
    gets closed up to a nullhomotopic loop by a path of $D$-size $<6$,
    so it must itself have $D$-size $<6$, and so $hq\nu$ has $D$-size
    $<10$.

\begin{figure}[ht]
\includegraphics[width=\textwidth]{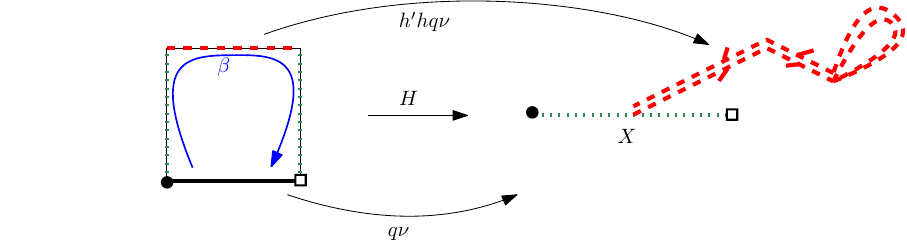}
\caption{$H:I\times I\to X$ denotes the homotopy from $q\nu$ to
  $h'hq\nu$. The path denoted by $\beta$ is mapped by $H$ to a path
  homotopic to $q\nu$. Since $H$ maps vertical segments $\{t\}\times
  I$ to paths whose images have $D$-length less than $3$, the dotted
  subpaths are mapped by $H$ to paths with $D$-length smaller than
  $3$. The dashed part of $\beta$ is null homotopic. Since $q\nu$ is
  immersed, its $D$-length is 
  $<6$.\label{Vanishing_paths_figure}}
\end{figure}

    We now observe that after folding $hq$ the tree
    components of the complement of the core have $D$-size
    $<10$. Indeed, choose any point $p\in\Gamma$. First fold all
    vanishing paths that do not contain $p$. After this, $p$ is still
    in the core. Finally, fold the remaining vanishing paths -- this
    operation changes only the neighborhood of $p$ of $D$-size $<10$.

    Now consider $(Dq)^{-1}[k+1,\infty)\subset\Gamma$. It is a
      disjoint union of (possibly degenerate) closed intervals in the
      interior of the separating arc of $\Gamma$. Form a new graph
      $\tilde\Gamma$ by attaching an edge $E_a$ to $\Gamma$ for every
      nondegenerate arc $a$ in this disjoint union, with $\partial
      E_a=\partial a$. Note that $q$ sends both endpoints of $a$ to
      the same vertex (at $D$-height $k+1$, i.e. distance $r_{k+1}$
      from the root vertex). Extend $q$ to an immersion
      $\tilde q:\tilde\Gamma\to X$ by sending each $E_a$ to a loop based at this
      vertex of combinatorial length $\leq 3$ (for example, one can
      send it either to the attached loop based at that vertex or to the loop
      of the form $dcd^{-1}$ where $d$ is an edge that increases the
      distance from the root and $c$ is the loop attached at the
      terminal vertex of $d$, see Figure \ref{GammaTildeFigure}). 
      Let $\Gamma'\subset\tilde\Gamma$ be the
      barbell obtained by deleting the interiors of the arcs $a$ as
      above, and let $q':\Gamma'\to X$ be the restriction of $\tilde q$.

\begin{figure}[ht]
\includegraphics[width=\textwidth]{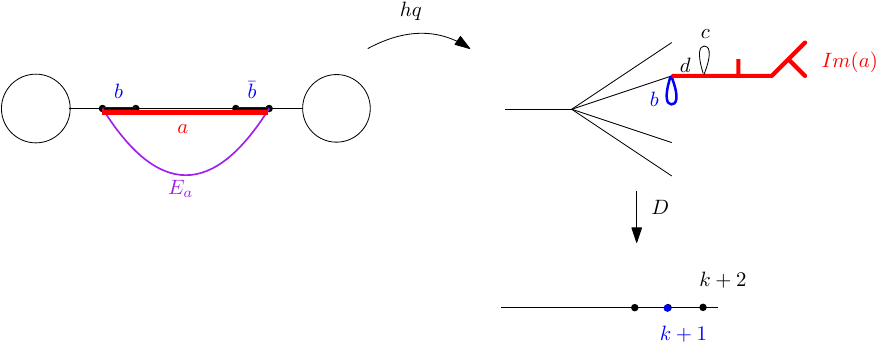}
\caption{The barbell graph on the left is the graph $\Gamma$. It is
  mapped via $h q$ to $X$. The graph $\tilde\Gamma$ is $\Gamma$ union
  the edge $E_a$. The arc labeled $a$ is mapped into
  $D^{-1}[k+1,\infty)$. Suppose the initial edge of of $a$ is mapped
    to the loop $b$ and the terminal edge is mapped to $\bar b$. Then
    in order to define $\tilde q$ on $\tilde\Gamma$ so that it will be
    an immersion we will let $E_a$ map to $dc\bar{d}$ where $d$ is the
    edge to the right of $b$, and $c$ is the one edge loop based at
    the endpoint of $d$.  \label{GammaTildeFigure}}
\end{figure}

      We now claim that $h\tilde q$ is homotopic into
      $D^{-1}(J_{n+1})$ for every $h\in H$.
      We can fold $h\tilde q$ by first folding $hq$, which produces a
      core graph with trees attached, and then adding the edges
      $E_a$. They could be attached to points in the attached trees,
      but all such attached trees have $D$-size $<10$ and map to
      $(k+1-10,k+1+10)\subset J_{n+1}$ by $D$. So after removing the
      attached trees to which no $E_a$'s are attached, the image is
      entirely contained in $D^{-1}(J_{n+1})$, which proves the
      claim.

      In particular, $hq':\Gamma'\to X$ is homotopic into
      $D^{-1}(J_{n+1})$, so $q'$ is also a witness to the fact that
      $x,x'$ lift to the same component of $\F^*(J_{n+1})$. By
      construction, the image of $q'$ does not exceed the $D$-height
      $k+2$, so $hq'$ does not exceed the $D$-height $k+4$, and we see
      that $hq'$ is contained (even without homotopies) in
      $D^{-1}(J_n)$. It follows that $hq'$ is homotopic into
      $D^{-1}(J_{n,n+1})$, and so the lifts of $x,x'$ in $\F^*(J)$ are
      in the same component.

To see that $\T^*$ has no loops, we note that if there were an
embedded loop then let $u \in V(\T^*)$ be the vertex of the loop of
maximal height. Since no two vertices of the same height are connected
then the loop gives us two vertices of the same height attached to $u$
which is a contradiction.
  \end{proof}

  Our next goal is to
  verify that $\T^*\to\T$ induces an isomorphism between the
  fundamental groups of these graphs of groups. Our method is to find a sequence
  of folds that converts $\T^*$ to $\T$. We will do this through an
  intermediate tree of groups $\T^*\to\T^{**}\to\T$. Only $\T^*$ will
  be $H$-invariant.

Recall the following folding moves on simplicial $G$-trees $T$
\cite{bf:bounding}. If $e_1$, $e_2$ are two oriented edges with the
common initial vertex $v$ such that $e_1\cup e_2$ embeds in the
quotient $T/G$, then we may construct a new $G$-tree $T'$ by
identifying $e_1$ and $e_2$ in an equivariant fashion, i.e. we
identify $g(e_1)$ and $g(e_2)$ for every $g\in G$. The stabilizer of
the new edge $e_1=e_2$ is the group generated by $Stab(e_1)$ and
$Stab(e_2)$, and similarly for the terminal vertices of $e_1$ and
$e_2$. The effect in the quotient graph is to fold the images of $e_1$
and $e_2$. This is called Move IA in \cite{bf:bounding}.

Similarly, suppose $e_1$, $e_2$ are two oriented edges with the common
initial vertex $v$, each edge embeds in the quotient $T/G$, but they
have the same images in $T/G$. This means that $g(e_1)=e_2$ for some
$g\in Stab(v)$, so $Stab(e_2)=g Stab(e_1) g^{-1}$. The equivariant
folding operation has the effect that the underlying quotient graph is
unchanged, but the stabilizer of $e_1=e_2$ is now the group generated
by $Stab(e_1)$ and $g$, and similarly for the terminal vertex. This is
called Move IIA, and we think of it as pulling the element $g\in
Stab(v)$ across the image edge to the terminal vertex and enlarging
the stabilizers by this $g$. In a similar way we can pull finitely
generated subgroups (or think of it as several Moves IIA performed in
sequence).

    Let $\T^{**}$ be the tree of groups obtained from $\T^*$ by
    folding each preimage of an edge to an edge, so that there is a
    morphism $\T^{**}\to\T$. This amounts to performing infinitely
    many Moves IA, but they are all independent and can be performed
    simultaneously. The resulting morphism  $\T^{**}\to\T$ is an isomorphism of
    underlying trees.

    It will be convenient to denote by $\mathcal T(e)$ the group
    associated to an edge $e$ of $\mathcal T$, and similarly for the
    vertices, and for the trees $\mathcal T^*$ and $\mathcal T^{**}$. 

    \begin{lemma}\label{5.9}
      After independent Moves IIA, the morphism $\T^{**}\to\T$ becomes
      an isomorphism of graphs of groups.
    \end{lemma}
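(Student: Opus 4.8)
The plan is to exhibit a family of independent Moves IIA that carries $\T^{**}$ to $\T$. The reason this suffices is that Moves IA and IIA are operations on $G$--trees that do not alter $G$, hence preserve the isomorphism type of $\pi_1$ of the quotient graph of groups \cite{bf:bounding}; so $\pi_1(\T^*)=\pi_1(\T^{**})$ already, and once $\T^{**}$ has been turned into $\T$ we will have shown $\pi_1(\T^*)\cong\pi_1(\T)\cong\pi_1(X)$, the goal announced before the lemma. To set up, recall that $\T^{**}\to\T$ is already an isomorphism of underlying trees, so we identify vertices and edges on the two sides. Since $\F'(J)=\bigcap_{h\in H}h_*(\F(J))$ is obtained by intersecting $\F(J)$ with its $H$--translates, we have $\F^*(J)<\F'(J)<\F(J)$ as free factor systems; hence every vertex group and every edge group of $\T^{**}$ is a \emph{free factor} of the corresponding group of $\T$, compatibly with the edge--to--vertex monomorphisms. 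Writing $C_v$ and $C_e$ for the components of the relevant $D^{-1}(J_n)$, resp.\ $D^{-1}(J_{n,n+1})$, so that $\T(v)=\pi_1(C_v)$ and $\T(e)=\pi_1(C_e)$, the effect of the Moves IA that built $\T^{**}$ is that $\T^{**}(v)$ is generated by the $\F^*(J_n)$--factors contained in $\T(v)$, and likewise for edges.

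The key geometric input is the large overlap $|J_{n,n+1}|\ge 22$, which dwarfs all the other constants occurring in Lemmas \ref{rs}--\ref{5.8} (the $2$ in $J^\pm$, the $<10$ appearing in vanishing--path estimates, and so on). Its first consequence is that the shrunk intervals $J_n^-$ still cover $[0,\infty)$ with substantial consecutive overlaps, and that any $D$--height within $2$ of an endpoint of $J_n$ lies well inside the neighbouring interval. From this one obtains a trichotomy for a loop $\ell$ of $C_v$, $v$ of height $n$, attached at height $t$: either $t$ has distance $\ge 2$ from both endpoints of $J_n$, in which case $\ell$ lies in a factor of $\F(J_n^-)$, hence already in $\T^{**}(v)$; or $t$ is within $2$ of the left endpoint of $J_n$, in which case $t$ is deep inside $J_{n-1}$, the piece $C_v$ reaches down to height $t$, and $\ell$ lies in $\T^{**}(u)$ and also in $\T(e)$ for the (unique, since $\T$ is a tree) edge $e=(u,v)$ of $\T$ whose piece $C_e$ contains the support of $\ell$; or, symmetrically, $t$ is within $2$ of the right endpoint and $\ell\in\T^{**}(w)\cap\T(e)$ for the up--edge $e=(v,w)$. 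Applying the same trichotomy to loops of $C_e$ shows that $\T(e)=\pi_1(C_e)$ is generated by loops each of which lies already in $\T^{**}(e)$, or in $\T^{**}(u)$, or in $\T^{**}(w)$ --- and in all cases genuinely inside $\T(e)$.

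Now perform the moves. For each edge $e=(u,w)$ of $\T$, use Moves IIA at $u$ along $e$ to pull into $\T^{**}(e)$ (and so also into $\T^{**}(w)$) the subgroup of $\T^{**}(u)$ generated by the loops of $C_e$ near the $u$--endpoint of the overlap, and symmetrically Moves IIA at $w$ along $e$ for the loops near the $w$--endpoint. These moves only enlarge groups; by the previous paragraph the pulled elements all lie inside $\T(e)$, so no group is pushed past its $\T$--counterpart (the conjugating elements being chosen to match the edge--inclusions of $\T$, which is routine bookkeeping); and moves associated with distinct edges are supported on disjoint stars, so the whole family is independent and may be carried out at once. After the moves, $\T^{**}(e)=\T(e)$ for every $e$; and for every vertex $v$ of height $n$, $\T^{**}(v)$ now contains, besides the $\F^*(J_n)$--factors it already held, every loop of $C_v$ near the left endpoint of $J_n$ (pulled in along a down--edge) and every loop near the right endpoint (pulled in along an up--edge), so $\T^{**}(v)=\pi_1(C_v)=\T(v)$. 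Since the underlying trees and all vertex and edge groups now agree, and all edge inclusions are the honest ones inside $\pi_1(X)$, $\T^{**}$ has become isomorphic to $\T$ as a graph of groups.

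The step I expect to be the real work is the bookkeeping in the last paragraph: verifying that the prescribed Moves IIA are genuinely independent and that pulling a subgroup across an edge does not over--enlarge the edge group. Both come down to the separation of the free factors involved --- concretely, the fact (Lemma \ref{5.2}) that a free factor of $\F^*(\cdot)$ is detected by any single nontrivial element it contains --- together with the estimate $|J_{n,n+1}|\ge 22$, which is precisely what keeps the "boundary" loops of each piece $C_e$ inside the appropriate adjacent piece.
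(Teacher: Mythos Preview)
Your argument is correct and is essentially the paper's own proof, just unpacked. The paper records the two interval containments $J_{n,n+1}\smallsetminus J_n^-\subset J_{n+1}^-$ and $J_n\subset J_n^-\cup J_{n-1,n}^-\cup J_{n,n+1}^-$ and immediately reads off that $\T(e)$ is generated by elements of $\T^{**}(u)$ and $\T^{**}(w)$, and that $\T(v)$ is generated by $\T^{**}(v)$ together with the (promoted) edge groups; the Moves IIA then consist of pulling $\T^{**}(w)\cap\T(e)$ across each $(w,e)$. Your ``trichotomy'' for loops at height $t$ is exactly the loop-by-loop reformulation of these interval containments, and your prescription of what to pull is a subset of (and suffices for) the paper's. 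The extra paragraph you devote to independence and to checking that no edge group overshoots $\T(e)$ is not in the paper's proof but is harmless; the paper simply asserts independence.
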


    \begin{proof}
      The moves consist of pulling across an edge $e$ from an endpoint
      $w$ the subgroup $\T^{**}(w)\cap \T(e)$, simultaneously for all
      $(w,e)$. Since $J_{n,n+1} \smallsetminus J_n^- \subset J_{n+1}^-$ 
      then $\F(J_{n,n+1})$ is generated by elements of $\F(J_n^{-})$ and $\F(J_{n+1}^-)$ which are contained in $\F^*(J_n)$ and $\F^*(J_{n+1})$ respectively. 
Therefore the group $\T(e)$ is generated by elements in $\T^*(w), \T^*(v)$ for the endpoints $w,v$ of $e$. Thus by applying IIA moves we can promote $\T^{**}(e)$ to $\T(e)$. Similarly, $J_n \subset J_n^{-} \cup J_{n-1,n}^- \cup J_{n,n+1}^-$ hence $\T(w)$ is generated by elements in $\T^{**}(w)$ and $\{\T^{**}(e) \mid w \text{ is an endpoint of } e\}$. Therefore we can promote $\T^{**}(w)$ to $\T(w)$ using IIA moves.
    \end{proof}

    When $\mathcal Y$ is a locally finite graph of groups with all
    vertex and edge stabilizers finite rank free groups we define the
    {\it geometric realization} $GR(\mathcal Y)$. This is the
    2-complex constructed by taking a finite graph $\Gamma_w$ for
    every vertex $w$ so that $\pi_1(\Gamma_w)=\mathcal Y(w)$, and
    similarly taking a finite graph $\Gamma_e$ for every edge $e$ so
    that $\pi_1(\Gamma_e)=\mathcal Y(e)$, and gluing $\Gamma_e\times
    [0,1]$ according to
    inclusion homomorphisms. Up to a proper homotopy equivalence,
    $GR(\mathcal Y)$ is independent of the choices.
    From the lemmas above we see that the
    fundamental groups of graphs of groups $\T$, $\T^*$, $\T^{**}$ are
    all isomorphic to $\pi_1(X)$. We now upgrade this to proper
    homotopy equivalences of geometric realizations.

    \begin{lemma}\label{allsame}
      $X, GR(\T), GR(\T^*), GR(\T^{**})$ are all proper homotopy
      equivalent.
    \end{lemma}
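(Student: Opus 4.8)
The plan is to produce a chain of proper homotopy equivalences $X \simeq GR(\T) \simeq GR(\T^{**}) \simeq GR(\T^*)$, using throughout the already-quoted fact that $GR(\mathcal Y)$ is well defined up to proper homotopy equivalence, so that we may realize $\T$, $\T^{**}$, $\T^*$ by whatever finite graphs are convenient. For $X \simeq GR(\T)$ I would realize $\T$ tautologically: for a vertex $w$ take $\Gamma_w$ to be the component of $D^{-1}(J_n)$ that $w$ names, for an edge $e$ take $\Gamma_e$ to be the component of $D^{-1}(J_{n,n+1})$ that $e$ names, and take all edge-to-vertex maps to be the subgraph inclusions; these realize the inclusion homomorphisms by the very definition of $\T$. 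Collapsing each cylinder $\Gamma_e \times [0,1]$ onto $\Gamma_e \subseteq X$ reassembles $X$ from the cover $\{D^{-1}(J_n)\}$ and defines a map $c \colon GR(\T) \to X$. It is proper, since $D^{-1}[0,N]$ meets only finitely many of the intervals $J_n$ and so $c^{-1}(D^{-1}[0,N])$ lies in finitely many cells; and it induces a homeomorphism of ends, since the nerve of the cover $\{D^{-1}(J_n)\}$ is precisely the underlying tree of $\T$ while each $D^{-1}(J_n)$ is finite. A proper homotopy inverse is the usual telescope section: choosing a collar of each $\Gamma_e$ inside the two adjacent vertex subgraphs, push $D^{-1}(J_{n,n+1})$ through the corresponding cylinder; both composites are then properly homotopic to the identity, by homotopies that on any bounded sub-level set of $D$ are supported within a slightly larger one.

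For $GR(\T) \simeq GR(\T^{**}) \simeq GR(\T^*)$ I would turn the folding moves of Lemmas~\ref{5.8} and \ref{5.9} into maps of geometric realizations. Recall that $\T^{**}$ is obtained from $\T^*$ by the independent Moves IA folding each preimage $\pi^{-1}(e)$ onto a single edge, and that by Lemma~\ref{5.9} a further family of independent Moves IIA modifies $\T^{**}$ into a graph of groups isomorphic to $\T$. Each individual move is realized on geometric realizations by a map that is the identity outside a subcomplex assembled from the cells in a single bounded band of $D$-heights --- those of the edge being folded or pulled across together with those of its endpoints --- and that on this subcomplex is a homotopy equivalence: a fold, and a pull-back of subgroups, each induces a $\pi_1$-isomorphism, and each of the finitely many graphs and mapping cylinders involved is homotopy equivalent to a finite graph, so a $\pi_1$-isomorphism between them is a homotopy equivalence, which one arranges to be the identity on the frontier. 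Assembling a whole family of moves gives a well-defined map of geometric realizations, and this map is a proper homotopy equivalence, because the cells meeting any given bounded sub-level set $D^{-1}[0,N]$ are touched by only finitely many moves, so the assembled map, the assembled inverse (built from the inverse moves), and all the assembled homotopies are proper. The finiteness used here is exactly the disjointness $J_n \cap J_m = \emptyset$ for $|n-m| > 1$, which confines each move to a bounded band of heights.

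I expect the main obstacle to be bookkeeping rather than topology. The homotopy-theoretic content is essentially free: it follows formally from the $\pi_1$-computations already carried out in Lemmas~\ref{5.8} and \ref{5.9} together with Whitehead's theorem applied piece by piece. What takes care is \emph{properness}, i.e.\ control at infinity, and the hard part will be to verify that each elementary move (a cylinder collapse, a Move IA, a Move IIA) is supported in a bounded band of $D$-heights and that any bounded sub-level set of $D$ is met by only finitely many moves. Once that locality is in place, properness of all the assembled maps and homotopies is automatic --- and securing this locality is precisely what the intervals $J_n$, their large overlaps $J_{n,n+1}$, and the control function $D$ were designed to provide.
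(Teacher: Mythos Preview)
Your proposal is correct and follows essentially the same approach as the paper. For $X\simeq GR(\T)$ the paper builds $GR(\T)$ explicitly as the subspace $\bigcup_n\big(D^{-1}(J_n)\times\{n\}\cup D^{-1}(J_{n,n+1})\times[n,n+1]\big)$ of $X\times[0,\infty)$, with projection and a section $x\mapsto(x,\phi(x))$ as mutual proper homotopy inverses via homotopy in the second coordinate---this is exactly your cylinder-collapse and telescope-section argument in coordinates. For the chain $GR(\T^*)\to GR(\T^{**})\to GR(\T)$ the paper simply asserts that Moves IA and IIA (pulling finitely generated subgroups) are proper homotopy equivalences on geometric realizations; your locality argument, that each move is supported in a bounded $D$-band and only finitely many moves touch any bounded sub-level set, is the justification the paper leaves implicit.
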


    \begin{proof}
      $GR(\T)$ can be built as a subspace of $X\times [0,\infty)$:
        $$GR(\T)=\cup_{n=0}^\infty \big(D^{-1}(J_n)\times\{n\}\cup
        D^{-1}(J_n\cap J_{n+1})\times [n,n+1]\big)$$
        The map $GR(\T)\to X$ is the projection, and $X\to GR(\T)$ is
        the map $x\mapsto (x,\phi(x))$, where $\phi$ equals $n$ on
        $J_n\smallsetminus (J_{n-1}\cup J_{n+1})$ and is in $[n,n+1]$
        on $J_n\cap J_{n+1}$. These are each other's proper homotopy
        inverses by homotoping along the second coordinate.
  
        That $GR(\T^*)\to GR(\T^{**})\to GR(\T)$ are proper homotopy
        equivalences follows from the fact that Moves IA as well
        as IIA consisting of pulling finitely generated subgroups
        are proper homotopy equivalences on geometric realizations.
    \end{proof}

    To finish, we need the relative version of Nielsen Realization for
    graphs, proved by Hensel-Kielak.

    \begin{thm}[\cite{hensel-kielak}]
      Let $H<Out(F_n)$ be a finite subgroup and $\F$ an $H$-invariant
      free factor system. Suppose the action of $H$ on $\F$ is
      realized as a simplicial action of $H$ on a finite graph $\Gamma_0$ whose
      fundamental group is identified with $\F$ (so the components of
      $\Gamma_0$ correspond to the free factors in $\F$). Then there
      is a finite graph $\Gamma$, a simplicial action of $H$ on
      $\Gamma$, an $H$-equivariant embedding
      $\Gamma_0\hookrightarrow \Gamma$, and an identification
      $\pi_1(\Gamma)\cong F_n$ so that the induced $H\to Out(F_n)$ is
      the given embedding $H<Out(F_n)$.
    \end{thm}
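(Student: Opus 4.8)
The plan is to deduce the relative statement from the classical Nielsen realization theorem for $Out(F_n)$ (the version of \cite{NR1,NR2,NR3,NR4}) together with an equivariant gluing argument. Since $\F$ is a free factor system we may write $F_n = A_1 * \cdots * A_s * B$ where the $[A_i]$ are the free factors of $\F$ and $B\cong F_r$; the subgroup $H$ permutes the conjugacy classes $[A_i]$, hence preserves the normal closure $N=\langle\langle A_1,\ldots,A_s\rangle\rangle$, and since $F_n/N\cong B\cong F_r$ it descends to a finite subgroup $\bar H<Out(F_r)$. First I would realize $\bar H$ by a simplicial action on a finite connected graph $\bar\Gamma$ with $\pi_1(\bar\Gamma)\cong F_r$ using the classical theorem. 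The graph $\Gamma$ we seek should collapse onto $\bar\Gamma$ when each component of $\Gamma_0$ is collapsed to a point, so what remains is to re-insert the components of $\Gamma_0$ into $\bar\Gamma$ along new edges carrying no fundamental group --- giving $\pi_1(\Gamma)=\pi_1(C_1)*\cdots*\pi_1(C_s)*\pi_1(\bar\Gamma)=F_n$ --- and to do so $H$-equivariantly and so that the induced outer action of $H$ on $\pi_1(\Gamma)$ is the prescribed one.

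Carrying out the re-insertion $H$-equivariantly is the crux, and I would do it by passing to the finite extension. Let $q\colon Aut(F_n)\to Out(F_n)$ be the canonical projection and $E=q^{-1}(H)$, so that $1\to F_n\to E\to H\to 1$ (identifying $F_n$ with $Inn(F_n)$); since $F_n$ is free of finite rank, $E$ is finitely generated and virtually free, hence $E\cong\pi_1$ of a finite graph of finite groups, with Bass--Serre tree $T_E$ having finite stabilizers. As $F_n$ is torsion free it acts freely on $T_E$, so $X:=T_E/F_n$ is a finite graph with $\pi_1(X)\cong F_n$ on which $H=E/F_n$ acts simplicially, realizing the given $H<Out(F_n)$; this already recovers the non-relative statement. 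For the relative statement one must choose the graph-of-groups decomposition of $E$ so that $X$ contains $\Gamma_0$ as an $H$-invariant subgraph inducing the inclusion $\F\hookrightarrow F_n$. The point is that $\Gamma_0$, together with its $H$-action and the identification of $\pi_1$ with $\F$, determines for each $H$-orbit of factors the finite-by-free subgroup of $E$ stabilizing a chosen $A_i$ --- here one uses that free factors are malnormal to see this subgroup meets $Inn(F_n)$ exactly in $A_i$ --- together with an $E$-equivariant forest (the $F_n$-cover of $\Gamma_0$) with finite stabilizers whose quotient by $F_n$ is $\Gamma_0$. So the task becomes: complete this $E$-forest to an $E$-tree with finite stabilizers whose $F_n$-quotient is connected with fundamental group $F_n$; the required $\Gamma$ is then its $F_n$-quotient, $\Gamma_0$ embeds as the image of the chosen subforest, and the outer $H$-action is correct because everything inside $E$ is a genuine (not merely outer) action.

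The main obstacle is precisely this last completion step --- the equivariant gluing, or ``relative accessibility'', for $E$ --- which is the real content of the theorem and is where \cite{hensel-kielak} does its work: one attaches the connecting edges of the graph of finite groups one $H$-orbit at a time, using virtual freeness of $E$ to supply the complementary free part and to keep the added edge and vertex groups finite, and then checks for consistency by verifying that collapsing the newly added part recovers the realization $\bar\Gamma$ of $\bar H$ constructed in the first step. A secondary technical point, needed to produce an honest embedding $\Gamma_0\hookrightarrow\Gamma$ rather than merely an $H$-equivariant homotopy equivalence onto a subgraph, is to insist that the decomposition of $E$ literally contain the $F_n$-cover of $\Gamma_0$ as a subforest; this is why it is cleaner to build $\Gamma$ from $T_E$ than to manipulate $\bar\Gamma$ directly. (When some $A_i$ has rank one there are minor extra bookkeeping issues coming from $\mathbb Z$ having nontrivial centralizer, but these do not affect the structure of the argument.)
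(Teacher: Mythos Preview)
The paper does not prove this theorem. It is stated with the citation \cite{hensel-kielak} and invoked as a black box; immediately after the statement the authors write ``When $\F$ is empty, we have the (absolute) Nielsen Realization \cite{NR1,NR2,NR3,NR4}'' and then proceed to apply the result. So there is nothing in the paper to compare your proposal against.

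As for your sketch on its own terms: the strategy of passing to the virtually free extension $E=q^{-1}(H)$ and using Bass--Serre theory is a legitimate route to the \emph{absolute} Nielsen Realization (this is essentially the Khramtsov/Zimmermann argument), and you are right that the relative version amounts to finding a splitting of $E$ compatible with a prescribed $E$-forest. But you have not actually carried out that step; you explicitly say ``this is where \cite{hensel-kielak} does its work'' and then describe in one sentence what one would like to do. That is the entire content of the theorem, so your proposal is an outline that correctly locates the difficulty rather than a proof. Your first paragraph, realizing the quotient $\bar H$ on $\bar\Gamma$, is also not clearly connected to the rest: once you have the $E$-tree with the right subforest you are done, and $\bar\Gamma$ plays no role in your second and third paragraphs except as a consistency check after the fact.
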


    When $\F$ is empty, we have the (absolute) Nielsen Realization
    \cite{NR1,NR2,NR3,NR4}.

    To apply this, we note:

    \begin{lemma}
      For every vertex $w$ in $\T^*$ the incident edge groups form a free
      factor system in $\T^*(w)$.
    \end{lemma}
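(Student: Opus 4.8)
The plan is to realize the incident edge groups at $w$ as the restriction to the vertex group of a single free factor system of $\pi_1(X)$ that refines $\F^*(J_n)$, and then to apply the following standard fact about free factor systems of a free group $\F$: \emph{if $\mathcal G<\mathcal H$ then for every free factor $A$ representing a class of $\mathcal H$ the subcollection $\mathcal G|_A:=\{\,[C]\in\mathcal G\mid C<A\,\}$ is a free factor system of $A$.} (One proof: realize $\mathcal H$ by pairwise disjoint connected subgraphs of a graph with fundamental group $\F$; since $\mathcal G<\mathcal H$, each class of $\mathcal G$ lies in exactly one of these subgraphs, and inside each such subgraph the corresponding part of $\mathcal G$ is again realized by disjoint subgraphs.)

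Let $w$ have height $n$, so that $A:=\T^*(w)$ is a free factor in $\F^*(J_n)$, and put $I:=J_{n-1,n}$, $I':=J_{n,n+1}$ (omitting $I$ if $n$ is minimal). By the construction of $\T^*$, an edge of height $[n-1,n]$ — that is, a free factor in $\F^*(I)$ — is incident to $w$ exactly when it is contained in $A$, and similarly for edges of height $[n,n+1]$, i.e.\ free factors in $\F^*(I')$; moreover every edge incident to a height-$n$ vertex has height $[n-1,n]$ or $[n,n+1]$. Thus the set of incident edge groups at $w$ is precisely $(\F^*(I)\cup\F^*(I'))|_A$, and by the fact above the Lemma follows once we check: (1) $\F^*(I)\cup\F^*(I')$ is a free factor system of $\pi_1(X)$; and (2) $\F^*(I)\cup\F^*(I')<\F^*(J_n)$.

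Statement (2) is immediate from the monotonicity $J\subset J'\Rightarrow\F^*(J)<\F^*(J')$ established above, since $I,I'\subset J_n$. For (1), the key point is that $I$ and $I'$ are disjoint intervals: $I\subset J_{n-1}$ and $I'\subset J_{n+1}$, while $J_{n-1}\cap J_{n+1}=\emptyset$ because $|(n-1)-(n+1)|>1$. Hence $D^{-1}(I)$ and $D^{-1}(I')$ are disjoint subgraphs of $X$, so the fundamental groups of the components of $D^{-1}(I)\sqcup D^{-1}(I')$ — the collection $\F(I)\cup\F(I')$ — form a free factor system of $\pi_1(X)$. By definition $\F^*(I)$ is a subcollection of $\F'(I)=\bigcap_{h\in H}h_*\F(I)$, so $\F^*(I)<\F(I)$, and likewise $\F^*(I')<\F(I')$. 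Applying the fact within each component of $D^{-1}(I)$, resp.\ of $D^{-1}(I')$, the part of $\F^*(I)$, resp.\ of $\F^*(I')$, carried there is a free factor system of that component's fundamental group, hence is realized by disjoint subgraphs of a graph obtained from that component by a homotopy equivalence. Reinstalling the modified components into $X$ yields a graph proper homotopy equivalent to $X$ in which all of $\F^*(I)\cup\F^*(I')$ is carried by pairwise disjoint subgraphs — pairwise disjoint because the pieces coming from $D^{-1}(I)$ and those from $D^{-1}(I')$ live in disjoint parts of $X$. This proves (1), and the Lemma follows by applying the fact to $\F^*(I)\cup\F^*(I')<\F^*(J_n)$ with $A=\T^*(w)$.

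The one delicate step is this last bookkeeping: simultaneously installing the chosen realizations of the various sub-factor-systems into a single graph proper homotopy equivalent to $X$ while keeping all the subgraphs disjoint. This is a soft manipulation of Stallings-type graphs with no new idea, and it goes through precisely because $I$ and $I'$ are disjoint, which is where the hypothesis that $J_n\cap J_m=\emptyset$ for $|n-m|>1$ on the chosen intervals is used.
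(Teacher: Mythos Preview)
Your argument is correct and is essentially a fleshed-out version of the paper's two-sentence proof, which reads: ``This is true for the tree $\T$ by construction. The statement then follows from the fact that intersections of free factor systems are free factor systems.'' The paper is relying on exactly the circle of ideas you spell out --- that $\F(J_{n-1,n})\cup\F(J_{n,n+1})$ is a free factor system because the corresponding subgraphs are disjoint, and that passing to $\F^*$ and then restricting to $A=\T^*(w)$ preserves this via the intersection/restriction machinery --- but compresses it into a single appeal to the closure of free factor systems under intersection. Your restriction fact (``if $\mathcal G<\mathcal H$ then $\mathcal G|_A$ is a free factor system of $A$'') is the same principle viewed from the side of $A$; one clean justification, slightly tighter than your sketch, is to lift the disjoint subgraphs realizing $\mathcal G$ to the covering space of the ambient graph corresponding to $A$, where they remain disjoint and embedded.
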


    \begin{proof}
      This is true for the tree $\T$ by construction. The statement
      then follows from the fact that intersections of free factor
      systems are free factor systems.
    \end{proof}

    Now we build a graph $Y$. We first construct graphs associated to
    the edges. Note that all orbits of edges are finite. For an
    edge $e$ of $\T^*$ choose a graph $\Gamma_e$ with
    $\pi_1(\Gamma_e)=\T^*(e)$ where $Stab_H(e)$ acts inducing the
    given action on $\T^*(e)$. Of course, $Stab_H(e)$ is a compact
    group, but the action on $\T^*(e)$ factors through a finite group,
    so we can apply the Nielsen Realization theorem.
    We associate the same graph to all edges in the orbit of $e$, with
    suitable identifications on $\pi_1$, so that $H$ now acts on the
    disjoint union of these graphs with the given action on $\pi_1$.

    Now consider a vertex $w$. We have that $Stab_H(w)$ acts on
    $\T^*(w)$ and this action factors through a finite group, which
    also acts on the free factor system defined by the incident
    edges. This action is realized by the action of $Stab_H(w)$ on the
    disjoint union of the graphs representing the edge spaces, so the
    Relative Nielsen Realization provides a finite graph $\Gamma_w$
    that contains this disjoint union and an extension of this
    action. Associate such graphs to the vertices equivariantly. The
    union along the subgraphs associated to the edges is the desired
    graph $Y$. Thus $H$ acts on $Y$ simplicially. The following lemma
    finishes the proof of the Main Theorem in the core graph case.

    \begin{lemma}
      There is a proper homotopy equivalence $Y\to X$ that commutes
      with the action of $H$.
    \end{lemma}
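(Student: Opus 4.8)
The plan is to reduce the statement to a computation in $Out(\pi_1(X))$. First note that $Y$, being glued from the finite graphs $\Gamma_w$ and $\Gamma_e$ along the tree $\T^*$, is a locally finite core graph and is nothing but a geometric realization $GR(\T^*)$ in which all the choices have been made $H$-equivariantly; in particular the simplicial $H$-action on $Y$ induces on $\pi_1(Y)\cong\pi_1(\T^*)$ the natural action of $H$ on the graph of groups $\T^*$ (on each vertex group $\T^*(w)$ it is the action supplied by the (Relative) Nielsen Realization used to build $\Gamma_w$). By Lemma \ref{allsame} there is a proper homotopy equivalence $F\colon Y\to X$; fix a proper homotopy inverse $F'\colon X\to Y$ and denote by $\hat h\colon Y\to Y$ the simplicial automorphism corresponding to $h\in H$. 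Since $X$ is a core graph, by Theorem \ref{id} (equivalently the core case of Corollary \ref{criterion}) it suffices, for each $h\in H$, to show that the proper map $hF\hat h^{-1}F'\colon X\to X$ induces the identity of $Out(\pi_1(X))$: then it is properly homotopic to $id_X$, hence $hF\simeq F\hat h$, and under the induced isomorphism $\maps(Y)\cong\maps(X)$ (Corollary \ref{topiso}) the class $[\hat h]$ goes to $[h]$, which is exactly the assertion that $H$ is realized by simplicial isomorphisms of $Y$.

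So the content is the identity $F_*\hat h_* F_*^{-1}=h_*$ in $Out(\pi_1(X))$. The map $F$ of Lemma \ref{allsame} is constructed through $GR(\T^*)\to GR(\T^{**})\to GR(\T)\to X$ and therefore realizes the \emph{canonical} isomorphism $\pi_1(\T^*)\cong\pi_1(X)$; concretely, restricted to a vertex graph $\Gamma_w$ it is, up to conjugacy, the inclusion of the free factor $\T^*(w)\le\pi_1(X)$ (recall that $\T^*(w)$ is one of the components of the $H$-invariant intersection $\F^*(J_n)$, which lies inside $\pi_1$ of a component of $D^{-1}(J_n)\subset X$). Now $\F^*(J_n)$ is $H$-invariant, so $h_*$ carries the conjugacy class of the free factor $\T^*(w)$ to a free factor in $\F^*(J_n)$, necessarily $\T^*(hw)$ where $hw$ denotes the image of $w$ under the $H$-action on $\T^*$; and by the way the $H$-action on $Y$ was set up, $\hat h_*$ carries $\T^*(w)\le\pi_1(Y)$ to $\T^*(hw)$ by the very same identification (the restriction of $h_*$). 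Hence $F_*\hat h_* F_*^{-1}$ and $h_*$ agree, up to conjugacy, on every vertex group. Since $F$ realizes the canonical — hence $H$-equivariant — identification, there is a single inner automorphism $\iota$ with $F_*=\iota\circ(\text{canonical iso})$, so $F_*\hat h_* F_*^{-1}=\iota h_*\iota^{-1}=h_*$ in $Out(\pi_1(X))$ (using that the vertex groups generate $\pi_1(X)$).

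With this in hand we finish: $hF\hat h^{-1}F'$ is a composite of proper maps, hence proper, and its induced outer automorphism is $h_*\cdot F_*\hat h_*^{-1}F_*^{-1}=h_*\cdot h_*^{-1}=1$. Theorem \ref{id} then yields $hF\hat h^{-1}F'\simeq id_X$; composing on the right first with $F$ (and using $F'F\simeq id_Y$) and then with $\hat h$ gives $hF\simeq F\hat h$ for every $h\in H$. Thus $F$ commutes with the $H$-actions up to proper homotopy, which is precisely what is needed for the isomorphism $\maps(Y)\cong\maps(X)$ to carry the simplicial $H$ to the given $H\subset\maps(X)$.

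The main obstacle is the bookkeeping of the second paragraph: one has to match the intrinsically floppy $H$-action on $\pi_1(X)$ (elements of $H$ are only proper maps, merely metrically controlled via Propositions \ref{rs} and \ref{homotopies}) with the rigid simplicial $H$-action on $Y$, and see that the canonical isomorphism $\pi_1(\T^*)\cong\pi_1(X)$ intertwines them. This is the step that genuinely uses the $H$-invariance of the free factor systems $\F^*(J_n)$ together with the equivariant application of (Relative) Nielsen Realization in the construction of the $\Gamma_w$; it also requires the mild care that ``agreement on each vertex group up to conjugacy'' upgrades to equality in $Out(\pi_1(X))$, which is legitimate precisely because $F$ was chosen to realize the canonical identification rather than an arbitrary isomorphism.
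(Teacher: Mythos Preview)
Your approach is essentially the same as the paper's: identify $Y$ with $GR(\T^*)$, use Lemma \ref{allsame} to get a proper homotopy equivalence between $Y$ and $X$, and then for each $h\in H$ reduce to checking an equality in $Out(\pi_1(X))$ via Theorem \ref{id}. The paper compresses your second and third paragraphs into the single phrase ``by construction $h:X\to X$ and $ghf:X\to X$ induce the same element of $Out(\pi_1(X))$''.

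One comment on your elaboration of that step. You first argue that $F_*\hat h_*F_*^{-1}$ and $h_*$ agree on each vertex group \emph{up to conjugacy}, correctly note this is not yet enough, and then resolve it by invoking that $F$ realizes ``the canonical --- hence $H$-equivariant --- identification''. But that $H$-equivariance is exactly the point at issue, so the vertex-by-vertex argument is not really doing work. What makes the canonical identification $H$-equivariant is that the $H$-action on $\T^*$ as a tree of groups (not merely on the individual vertex groups) was \emph{defined} from $h_*$: both the permutation of vertices/edges and the isomorphisms between vertex groups come from restricting $h_*$, and the edge-to-vertex inclusions are preserved. An automorphism of a tree of groups determines an element of $Out$ of its fundamental group, and here that element is $h_*$ by design; the simplicial $\hat h$ realizes the same automorphism of $\T^*$ by the equivariant Nielsen Realization. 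That is the content of ``by construction''. Your proof is correct, but the justification would be cleaner if you argued at the level of the graph-of-groups automorphism rather than vertex group by vertex group.
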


    \begin{proof}
      Using the same graphs to represent vertex and edge groups, the
      geometric realization $GR(\T^*)$, after collapsing the
      $I$-factors, becomes $Y$, and this is a proper homotopy
      equivalence. By composing with proper homotopy equivalences from
      Lemma \ref{allsame} we have $f:X\to Y$ and $g:Y\to X$, which are
      each other's inverses. If $h\in H$ then by construction $h:X\to
      X$ and $ghf:X\to X$ induce the same element of
      $Out(\pi_1(X))$. It then follows from Theorem \ref{id} applied
      to $ghf\cdot h^{-1}$ that they are properly homotopic.
    \end{proof}

\section{Proof for trees}\label{5}

We next prove Nielsen realization for trees.
  
  \begin{thm}
    Suppose the graph $X$ is a tree and let $H<\maps(X)$ be a compact
    subgroup. Then there is a tree $Y\simeq X$ where $H$ acts by
    simplicial isomorphisms.
  \end{thm}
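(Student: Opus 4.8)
The plan is to exploit the identification $\maps(X)\cong Homeo(\p X)$, valid for trees by Corollary \ref{sigma} (a homeomorphism of topological groups by Corollary \ref{4.12}), so that $H$ becomes a compact subgroup of $Homeo(\p X)$ acting continuously on the compact metrizable totally disconnected space $\p X$. The goal is to build a locally finite tree $Y$ on which $H$ acts simplicially, together with an $H$-equivariant identification $\p Y\cong\p X$. The Classification Theorem \ref{graphClassification} then supplies a proper homotopy equivalence $X\to Y$ inducing this boundary homeomorphism, and under the resulting isomorphism $\maps(X)\cong\maps(Y)$ of Corollary \ref{topiso} --- which, read through $\sigma$, is just the isomorphism $Homeo(\p X)\cong Homeo(\p Y)$ --- the subgroup $H$ is carried onto the realized group of simplicial isomorphisms of $Y$.

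\emph{An $H$-invariant metric.} Let $\mu$ be normalized Haar measure on the compact group $H$, fix a metric $d_0$ inducing the topology of $\p X$, and set $d(x,y)=\int_H d_0(hx,hy)\,d\mu(h)$. Continuity of the action and boundedness of $d_0$ make $d$ a well-defined continuous symmetric function satisfying the triangle inequality; it is positive off the diagonal, hence a metric, and bi-invariance of $\mu$ gives $d(hx,hy)=d(x,y)$ for all $h\in H$. Since $d$ is continuous for the original topology, the identity map from $\p X$ to $(\p X,d)$ is a continuous bijection from a compact space to a Hausdorff space, hence a homeomorphism; thus $d$ induces the original topology.

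\emph{$H$-invariant clopen partitions of arbitrarily small mesh.} Because $\p X$ is compact and totally disconnected, for each $\e>0$ there is a finite partition $\mathcal Q$ into nonempty clopen sets of $d$-diameter $<\e$. For any fixed clopen $Q\subseteq\p X$ the map $h\mapsto h(Q)$ is locally constant on $H$: if $h_n\to h$ then $h_n^{-1}\to h^{-1}$ uniformly, and since $\mathbf 1_Q$ is uniformly continuous ($Q$ being clopen) we get $\mathbf 1_Q\circ h_n^{-1}=\mathbf 1_Q\circ h^{-1}$ for large $n$, i.e. $h_n(Q)=h(Q)$. By compactness of $H$, $h\mapsto h(\mathcal Q)$ takes only finitely many values, so the common refinement $\bigvee_{h\in H}h(\mathcal Q)$ is a finite clopen partition; it is $H$-invariant, and since $d$ is $H$-invariant every $h(\mathcal Q)$, hence the refinement, has mesh $<\e$. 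Refining successively over $\e=1,\tfrac12,\tfrac13,\dots$, we obtain $H$-invariant finite clopen partitions $\mathcal P_1,\mathcal P_2,\dots$ with $\mathcal P_{n+1}$ refining $\mathcal P_n$ and $\mathrm{mesh}(\mathcal P_n)\to0$; since each nested sequence of partition elements meets in a single point of $\p X$, the natural map $\p X\to\varprojlim_n\mathcal P_n$ is an $H$-equivariant homeomorphism.

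\emph{The mapping telescope and conclusion.} Let $Y$ be the rooted tree whose level-$0$ vertex is a root $\ast$, whose level-$n$ vertices ($n\ge1$) are the elements of $\mathcal P_n$, with an edge from $\ast$ to each $P\in\mathcal P_1$ and an edge from $P\in\mathcal P_n$ to $P'\in\mathcal P_{n+1}$ whenever $P'\subseteq P$. Each $\mathcal P_n$ is finite and each $P\in\mathcal P_n$ contains finitely many but at least one element of $\mathcal P_{n+1}$ (as $\mathcal P_{n+1}$ refines $\mathcal P_n$), so $Y$ is a connected locally finite tree whose ends are exactly the infinite descending chains; hence $\p Y=\varprojlim_n\mathcal P_n\cong\p X$ by the previous step. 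The permutation action of $H$ on each $\mathcal P_n$ is compatible with inclusion and fixes $\ast$, so it defines a simplicial action of $H$ on $Y$ whose induced action on $\p Y\cong\p X$ is the original one; combined with the first paragraph, this proves the theorem. The only nonroutine point is the finiteness of $\bigvee_{h\in H}h(\mathcal Q)$, which rests on the local constancy of $h\mapsto h(Q)$ together with compactness of $H$; the $H$-invariant metric is needed only to keep the mesh controlled when passing to this refinement, and the telescope step is the standard inverse-limit picture.
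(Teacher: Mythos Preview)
Your proof is correct and follows essentially the same three-step route as the paper: average to get an $H$-invariant metric, produce a nested sequence of $H$-invariant finite clopen partitions with mesh $\to 0$, and take the mapping telescope. The only difference is in Step~2: the paper obtains invariant partitions directly as the $\epsilon$-path-connected components of the invariant metric (automatically $H$-invariant and refining as $\epsilon\to 0$), whereas you start from an arbitrary small-mesh clopen partition and pass to the common refinement $\bigvee_{h\in H} h(\mathcal Q)$, using local constancy of $h\mapsto h(Q)$ plus compactness of $H$ to see this is finite---both arguments are valid, and in fact your refinement already has mesh $<\varepsilon$ in the original metric (its pieces are subsets of pieces of $\mathcal Q$), so the invariant metric is not strictly needed in your version.
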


  Note that by Corollary \ref{4.12} $\maps(X)=Homeo(\partial
  X)$. Fix a metric $d$ on $\partial X$.

  {\bf Step 1.} We replace $d$ by an $H$-invariant metric $d'$. Let $\nu$
  be a Haar measure on $H$ and define
  $$d'(p,q)=\int_H d(h(p),h(q))~ d\nu$$
  This is an $H$-invariant metric.
  We drop the prime and assume $d$ is $H$-invariant.

  {\bf Step 2.} We now build equivariant finite partitions of $\partial X$
  into clopen sets. Let $\epsilon>0$. Say $p,q\in \partial X$ are
  $\epsilon$-path connected if there is a sequence
  $p=z_0,z_1,\cdots,z_n=q$ so that $d(z_i,z_{i+1})<\epsilon$ for all
  $i=0,\cdots,n-1$. The equivalence classes form the desired
  partition $\mathcal P_{\epsilon}$. Note that if $\epsilon<\epsilon'$
  then $\mathcal P_\epsilon$ refines $\mathcal P_{\epsilon'}$ and if
  $\mathcal P$ is an arbitrary finite partition into clopen sets,
  there is $\epsilon>0$ so that $\mathcal P_{\epsilon}$ refines
  $\mathcal P$.

  {\bf Step 3.} Finally we build $Y$ as the {\it mapping telescope} of
  a sequence of partitions from Step 2. Fix a decreasing sequence
  $\epsilon_n\to 0$ with $n=1,2,\cdots$ and let $\mathcal
  P_n:=\mathcal P_{\epsilon_n}$. We also set $\mathcal P_0$ to be the
  trivial partition $\{\partial X\}$.  Since $\mathcal P_{n+1}$
  refines $\mathcal P_n$ we have a natural surjection $\mathcal
  P_{n+1}\to \mathcal P_{n}$ induced by inclusion of sets. Now let $Y$
  be the mapping telescope of this sequence. More concretely, the set
  of vertices is the disjoint union $\sqcup_{n=0}^\infty \mathcal
  P_n\times\{n\}$, and there is an edge from $P\times\{n+1\}$ to
      $Q\times\{n\}$ whenever $P\subseteq Q$ (here $P\in\mathcal
      P_{n+1}$ and $Q\in \mathcal P_n$). Then $Y$ is a tree and
      $\partial Y$ is naturally (and $H$-equivariantly) homeomorphic
      to $\partial X$ by the homeomorphism that sends a branch
      $(P_n)_n$ of $Y$ to the point $\cap_n P_n$ in $\partial X$. The
      theorem is now proved since we have natural 
      identifications
  $$\maps(X)=Homeo(\partial X)=Homeo(\partial Y)=\maps(Y)$$
  and $H$ acts simplicially on $Y$.

  \section{Proof in general}

Let $X$ be a locally finite graph which is not a tree and assume that
a compact group $H$ is acting on $X$ by proper homotopy
equivalences. The action then restricts to the core $X_g$ (see Lemma
\ref{restriction}) and by the special case of core graphs there is a
core graph $Y_g$, an action of $H$ by simplicial isomorphisms on
$Y_g$, and an $H$-equivariant proper homotopy equivalence $f:X_g\to
Y_g$. 

\begin{lemma}
  There is a locally finite graph $Y\supseteq Y_g$ and a proper
  homotopy equivalence $X\to Y$ that extends $f$.
\end{lemma}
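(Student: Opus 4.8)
The plan is to build $Y$ from $X$ by replacing each tree hanging off the core by an $H$-equivariantly-constructed mapping telescope, attached to $Y_g$ at a point supplied by a fixed-point argument. Write $X = X_g \cup (\text{trees})$, and recall that $DX = \partial X \smallsetminus \partial X_g$ is the set of ends not accumulated by genus; these are exactly the ends lying in the attached trees. Since $f:X_g\to Y_g$ is an $H$-equivariant proper homotopy equivalence it induces an $H$-equivariant homeomorphism $\partial X_g \to \partial Y_g$, and by the Classification Theorem (Theorem \ref{graphClassification}) together with Corollary \ref{criterion2} it suffices to produce a locally finite $Y \supseteq Y_g$ whose space of ends carries an $H$-equivariant homeomorphism $\partial X \to \partial Y$ restricting to the one already fixed on $\partial X_g$, and such that the genus data agree (the core is unchanged). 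So the real content is: realize the $H$-action on the clopen set $DX \subset \partial X$ by an $H$-tree attached equivariantly to $Y_g$.

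\textbf{Step 1: equivariant clopen exhaustion of $DX$.} As in Step 1 of the tree case, average a metric over the Haar measure of $H$ to get an $H$-invariant metric on $\partial X$, and use the $\epsilon$-path-component construction (Step 2 of the tree case) to produce a sequence of $H$-invariant finite partitions $\mathcal P_n$ of $\partial X$ into clopen sets with mesh $\to 0$, each refining the previous. Intersecting with the clopen set $DX$, and discarding the part meeting $\partial X_g$, we get for each component $C$ of each $\mathcal P_n$ that lies in $DX$ a clopen piece; the collection of these, across all $n$, is an $H$-invariant system of clopen subsets of $DX$ closed under the refinement maps, with each maximal piece (those in $\mathcal P_0$, say) a clopen subset of $DX$ on which a finite-index subgroup of $H$ acts.

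\textbf{Step 2: choosing attaching points via the fixed-point lemma.} For each clopen $U \subseteq DX$ appearing at the top level, the subtree telescope built from the nested partition elements below $U$ (exactly the mapping-telescope construction of Section \ref{5}) is a tree $T_U$ with $\partial T_U \cong U$ equivariantly under $\mathrm{Stab}_H(U)$. This $T_U$ must be glued to $Y_g$ at a single vertex, and for the resulting action on $Y$ to be simplicial and to conjugate the original action, the glueing point must be $\mathrm{Stab}_H(U)$-fixed and the orbit of glueing points must be consistent. Here is where Lemma \ref{fixed} (the fixed-point theorem) is invoked: it supplies, for the compact group $\mathrm{Stab}_H(U)$ acting on the finite core $Y_g$, a fixed point (or fixed simplex, which can be subdivided) where $T_U$ is attached; doing this $H$-equivariantly over each orbit of clopen pieces yields the graph $Y$ together with a simplicial $H$-action extending the one on $Y_g$.

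\textbf{Step 3: verifying the proper homotopy equivalence.} Finally, project $Y \to X$ by collapsing each new telescope $T_U$ back onto a ray representing the corresponding end-cluster of $X$, and include $X \hookrightarrow Y$ by sending each old tree into its telescope; these restrict to $f$ on $X_g \leftrightarrow Y_g$ and induce the identity on $\partial X_g$ and the prescribed homeomorphism on $DX$. Since the restriction $X_g \to Y_g$ is already a proper homotopy equivalence and the boundary map is a homeomorphism, Corollary \ref{criterion2} shows the extension is a proper homotopy equivalence; the $H$-equivariance holds by construction, and because $ghf$ and $h$ induce the same boundary map and the same element on $\pi_1$ the two actions are properly-homotopy-conjugate by Corollary \ref{criterion}. \textbf{The main obstacle} is Step 2: ensuring that the attaching points for telescopes can be chosen simultaneously $H$-equivariantly and compatibly as the clopen pieces refine, i.e.\ that the fixed-point data at different levels and for nested stabilizer subgroups can be organized into one coherent equivariant glueing — the mismatch one must rule out is a telescope whose sub-telescopes want to attach at incompatible points of $Y_g$, and this is precisely what the carefully stated Lemma \ref{fixed} is designed to prevent.
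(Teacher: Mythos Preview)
You have misread what this lemma is asking for. Look again at the statement: it only requires \emph{some} locally finite graph $Y\supseteq Y_g$ and \emph{some} proper homotopy equivalence $X\to Y$ extending $f$. There is no demand that $H$ act on $Y$, nor that the equivalence be $H$-equivariant. The paper's proof is correspondingly elementary: take the mapping cylinder $M$ of $f:X_g\to Y_g$, observe that both $X_g$ and $Y_g$ are proper strong deformation retracts of $M$ (citing Fox and the proper Whitehead theorem), and then attach the trees $T_v$ hanging off $X_g$ at $v$ to $Y_g$ at $f(v)$. The resulting $Y$ and the obvious map do the job. The purpose of the lemma is only to justify the next sentence in the paper: ``We will now revert to the original notation and simply assume that $H$ is acting by simplicial isomorphisms on $X_g$.''

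What you have written is, in outline, the construction carried out in the \emph{remainder} of Section~7 \emph{after} this reduction: equivariant partitions of $DX$, mapping telescopes, the fixed-point Lemma~\ref{fixed} to locate attaching points, and Corollary~\ref{criterion} to verify equivariance. That is a proof of the Main Theorem in the general case, not of this lemma. Beyond the overkill, there is a genuine structural problem: Lemma~\ref{fixed} is stated and proved under the standing hypothesis (established precisely by the lemma you are meant to be proving) that $H$ already acts simplicially on the core. Invoking it here risks circularity, and in any case the Nielsen-ray argument in Lemma~\ref{fixed} is phrased for the graph whose core carries the simplicial action, which you do not yet have. (Also, you call $Y_g$ ``the finite core''; it need not be finite.) The fix is simply to prove the lemma as stated with the mapping-cylinder argument, and save the telescope construction for the subsequent equivariant step.
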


\begin{proof}
  Form the mapping cylinder $M=X_g\times I\sqcup Y_g/x\sim f(x)$ of
  $f$. Since $f$ is a proper homotopy equivalence, both 0 and 1-levels
  of $M$ (which can be identified with $X_g$ and $Y_g$) are proper strong
  deformation retracts of $M$. For $Y_g$ this can be seen by deforming
  along the mapping cylinder lines. For $X_g$, without the word
  ``proper'', this is a theorem of Ralph Fox \cite{fox}, see also
  \cite{fuchs}, but their proofs work just as well in the proper
  category. The statement can also be deduced from the 
  Whitehead theorem, see \cite{hatcher}, and
  \cite{farrell-taylor-wagoner} for the proper version. Now $X$ is
  obtained from $X_g$ by attaching trees $T_v$ along vertices $v\in
  X_g$. Attach products $T_v\times I$ to $M$ along the natural copies
  of $\{v\}\times I$ to obtain a space $Z$ and note that both $X$ and
  the space $Y$ (obtained from $Y_g$ by attaching trees $T_v$ along
  $f(v)$) are proper strong deformation retracts of $Z$ and this gives
  the desired proper homotopy equivalence $X\to Y$.
\end{proof}

We will now revert to the original notation and simply assume that $H$
is acting by simplicial isomorphisms on $X_g$.

  By the {\it convex hull} of a nonempty subset of a simplicial tree we mean
  the smallest simplicial subtree that contains the set. The following
  fixed point fact is well known.

  \begin{lemma}\label{fixed0}
    Suppose a compact group $H$ acts continuously on a simplicial
    tree. Then $H$ fixes a point in the convex hull of any orbit.
  \end{lemma}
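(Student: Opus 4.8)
The plan is to reduce this to the classical fact that a finite group acting on a finite tree has a fixed point. Write $T$ for the simplicial tree. First I would check that \emph{every orbit is finite}. Fix $x\in T$ and set $O=Hx$. The orbit map $H\to T$, $g\mapsto gx$, is continuous, so $O$ is compact. The vertices of $T$ are pairwise at distance $\ge 1$, and so are the midpoints of distinct edges, so both the vertex set and the set of edge–midpoints are discrete subsets of $T$. If $x$ is a vertex, then $O$ is a compact subset of the (discrete) vertex set, hence finite. If $x$ lies in the interior of an edge $e$, let $m$ be the midpoint of $e$; then $Hm$ is a compact subset of the set of edge–midpoints, hence finite, and since $gx$ determines the edge $ge$ and hence $gm$, the assignment $gx\mapsto gm$ is a well-defined surjection $O\to Hm$ whose fibres have size at most $2$. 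So $O$ is finite in every case.

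Next I would \emph{pass to a finite tree}. Let $C$ be the convex hull of $O$, i.e.\ the smallest simplicial subtree of $T$ containing $O$. Since $O$ is finite, $C$ is a finite tree. A simplicial automorphism of $T$ carries the convex hull of a set to the convex hull of its image, so every $g\in H$ satisfies $g(C)=g(\mathrm{hull}(O))=\mathrm{hull}(gO)=\mathrm{hull}(O)=C$; thus $H$ acts on the finite tree $C$ by simplicial automorphisms. The automorphism group of a finite graph is finite, so this action factors through a finite quotient $\bar H$.

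Finally I would \emph{fix a point}. A finite group acting on a finite tree fixes a point: if $C$ has at least two edges it has at least two leaves, which $\bar H$ permutes, and deleting every leaf together with its incident edge produces a nonempty $\bar H$-invariant subtree with strictly fewer edges; iterating, one reaches a subtree that is a single vertex or a single edge, whose barycenter is then $\bar H$-fixed. (Alternatively, $C$ is a complete $\mathrm{CAT}(0)$ space, and the circumcenter of any $\bar H$-orbit is an $\bar H$-fixed point of $C$.) Such a point $p$ lies in $C$, and since the $H$-action on $C$ factors through $\bar H$, it is fixed by all of $H$. As $C$ is the convex hull of $O$, this is the required point.

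No step is genuinely difficult; the only one needing a little care is the first, where continuity of the action together with compactness of $H$ must be used to conclude that orbits are finite (equivalently, that point stabilizers are open). The fixed-point statement invoked at the end is entirely standard — a special case of the Bruhat–Tits fixed point theorem, or of Serre's lemma on bounded group actions on trees.
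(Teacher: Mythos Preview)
Your proof is correct and follows essentially the same approach as the paper: the convex hull of an orbit is $H$-invariant, and iterated leaf-pruning produces a fixed vertex or edge-midpoint. The paper is terser, noting only that the convex hull has finite diameter (since the orbit, being the continuous image of the compact group $H$, is bounded) and then pruning; your extra work showing the orbit is actually finite and passing to a finite quotient is correct but not strictly needed for the pruning step to terminate.
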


  \begin{proof}
    The convex hull is $H$-invariant and it is a tree of finite
    diameter. Iteratively remove all edges that contain a valence 1
    vertex until the tree that's left is either a single vertex or a
    single edge. This vertex or the midpoint of the edge is then fixed
    by $H$.
  \end{proof}

  We will now use this fact to prove the following fixed point
  theorem, which is really the heart of the argument in this case.

  \begin{lemma}\label{fixed}
    Suppose $H$ fixes a point $\beta$ in $DX=\partial X\smallsetminus
    \partial X_g$. Then $H$ fixes a point $\rho(\beta)$
    in $X_g$ and there is a ray (called the {\it Nielsen ray}) $r$ from
    $\rho(\beta)$ to $\beta$ such that $h(r)$ and $r$ are properly
    homotopic rel $\rho(\beta)$ for every $h\in H$.
  \end{lemma}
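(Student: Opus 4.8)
The plan is to pass to the universal cover and invoke the fixed point fact Lemma~\ref{fixed0}. Since $\pi_1(X)$ is free, the universal cover $\tilde X$ is a tree; let $\tilde X_g$ be the (connected) lift of $X_g$, a subtree on which the deck group $G=\pi_1(X)$ acts freely. Fix a ray $r_0$ from a point $x_0\in X_g$ to $\beta$; it eventually leaves $X_g$ into the attached tree $T_v$ carrying a neighborhood of $\beta$, and a lift $\tilde r_0$ of $r_0$ starting at a chosen lift $\tilde x_0$ of $x_0$ determines an end $\tilde\beta\in\partial\tilde X$ lying in a lift $\widetilde{T_v}$ of $T_v$. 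Since distinct lifts of $T_v$ are attached to $\tilde X_g$ at distinct vertices and $G$ acts freely on vertices, $\stab_G(\tilde\beta)=1$, and $G$ permutes the set of lifts of $\beta$ simply transitively (any two rays to $\beta$ eventually coincide up to proper homotopy). Consequently each $[h]\in H$ — which fixes $\beta$ — has a \emph{unique} lift $\tilde h\colon\tilde X\to\tilde X$ with $\tilde h(\tilde\beta)=\tilde\beta$; and since the chosen simplicial representative $h$ satisfies $h(X_g)=X_g$, its lift $\tilde h$ restricts to a simplicial automorphism of $\tilde X_g$. First I would check that $[h]\mapsto\tilde h$ is a homomorphism from $H$ into the group of simplicial automorphisms of $\tilde X_g$: it is multiplicative because $\tilde h_1\tilde h_2$ is again a lift fixing $\tilde\beta$, and it is well defined because a proper homotopy between two representatives lifts to one along which the image of $\tilde\beta$ traces a path in the totally disconnected space $\partial\tilde X$, hence stays put.

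The crucial point, and what I expect to be the main technical obstacle, is that this action of $H$ on $\tilde X_g$ is \emph{continuous}, i.e.\ $\stab_H(\tilde u)$ is open for every vertex $\tilde u$ of $\tilde X_g$. Here I would use the clopen subgroups $U_K$. Given $\tilde u$ over $u\in X_g$, choose a finite subgraph $K\subset X$ large enough to contain $u$, $x_0$, and the part of $r_0$ lying outside a small neighborhood of $\beta$, arranged so that the tails of $r_0$ (and that neighborhood) lie in a single complementary component $U$ of $K$ whose closure is a tree — possible since $\beta\in DX$. For $[h]\in U_K$ the representative that is the identity on $K$ lifts to a map $\tilde h_0$ equal to the identity on the whole preimage of $K$, so it fixes $\tilde u$ and the initial part of $\tilde r_0$; moreover $h(r_0)$ differs from $r_0$ only on the tails, where both lie in the tree $\overline U$, so $h(r_0)\simeq r_0$ rel endpoints, whence $\tilde h_0(\tilde r_0)\simeq\tilde r_0$ rel $\tilde x_0$ and therefore $\tilde h_0(\tilde\beta)=\tilde\beta$. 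Thus $\tilde h_0=\tilde h$, so $H\cap U_K\subseteq\stab_H(\tilde u)$, which is open. (Equivalently one can package this as continuity of the induced homomorphism $H\to\mathrm{Aut}(\pi_1(X,\beta))$, proved by the same $U_K$ argument, together with a crossed-homomorphism computation relating $\tilde h(\tilde u)$ to a proper homotopy class of lines at $\beta$.) It follows that vertex orbits are finite, so the convex hull of any orbit in $\tilde X_g$ is a finite subtree.

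With continuity in hand, Lemma~\ref{fixed0} applied to the action of $H$ on $\tilde X_g$ produces a point $\tilde p\in\tilde X_g$ (a vertex or an edge midpoint) fixed by all $\tilde h$. Let $\rho(\beta)$ be the image of $\tilde p$ in $X_g$; projecting $\tilde h(\tilde p)=\tilde p$ gives $h(\rho(\beta))=\rho(\beta)$ for every $h\in H$. For the Nielsen ray, let $\tilde r$ be the unique embedded ray from $\tilde p$ to $\tilde\beta$ in the tree $\tilde X$ and let $r$ be its image in $X$, a ray from $\rho(\beta)$ to $\beta$. Properness of $r$ — and of $\tilde h\circ\tilde r$ for each $h$ — follows because these rays eventually lie in a neighborhood of $\tilde\beta$ that the covering maps homeomorphically to a neighborhood of $\beta$, where properness is read off downstairs. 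Now fix $h\in H$ (the simplicial representative): $\tilde h\circ\tilde r$ is a proper path from $\tilde h(\tilde p)=\tilde p$ to $\tilde h(\tilde\beta)=\tilde\beta$ in the tree $\tilde X$, hence is properly homotopic to $\tilde r$ rel $\tilde p$ by tightening. Projecting this homotopy downstairs yields the required proper homotopy $h(r)\simeq r$ rel $\rho(\beta)$, completing the proof.
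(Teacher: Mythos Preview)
Your proof is correct and follows essentially the same strategy as the paper's: lift the action to the universal cover by choosing the unique lift fixing a preferred lift $\tilde\beta$ of $\beta$, verify this gives a continuous action of $H$ on the tree $\tilde X_g$, invoke Lemma~\ref{fixed0}, and project the fixed point and the ray back down. Your continuity argument differs slightly in implementation---you show $H\cap U_K\subseteq\stab_H(\tilde u)$ by arguing that the $U_K$-representative identity on $K$ lifts to the identity on the preimage of $K$ and then checking separately that this lift also fixes $\tilde\beta$ (because the tails of $r_0$ lie in a tree), whereas the paper more directly arranges the representative to fix $K\cup r$ and uses connectedness of this set to pin down the lift---but the content is the same.
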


  \begin{proof}
    Let $\tilde X$ be the universal cover of $X$. Let $r$ be a ray in
    $X$ converging to $\beta$. The deck group acts simply transitively on
    the set of lifts of $r$ and distinct lifts are not asymptotic and
    hence not properly homotopic. Choose one such lift $\tilde r$.
    Every $h\in H$ has a unique lift to $\tilde X$ that fixes the
    asymptotic class of rays $[\tilde r]$ 
    and the set of these lifts defines an action of $H$ on
    $\tilde X$ by proper homotopy equivalences. We will prove that the action is continuous in the next paragraph.  The lifted group $H$ preserves the preimage $\tilde X_g$ of
    $X_g$, which is a tree, and this defines an action of $H$ on
    $\tilde X_g$. By Lemma \ref{fixed0} it fixes
    a point $z$. The image of $z$ in $X_g$ is the desired fixed
    point and the image of the ray that starts at $z$ and is
    asymptotic to $\tilde r$ is the Nielsen ray.

    The action
    is continuous: if $h\in H$ is close to the identity, we can choose
    a representative in its proper homotopy class that fixes a large
    compact set $K\subset X$ as well as the ray $r$, and preserves the
    complementary components of $K$. We can also arrange that $K\cup
    r$ is connected. Then the lift of $h$ to $\tilde X$ will fix the
    preimage $\tilde K$ and will preserve its complementary
    components. Since $K$ can be chosen so that $\tilde K$ contains
    any given compact set, the lift of $h$ will be close to the
    identity.
     \end{proof}

    Let $d$ be an $H$-invariant metric on $\partial X$ (see Step 1 in
    Section \ref{5}) and let $\mathcal P_\epsilon$ be the partition of
    $\p X$ as in Step 2 in Section \ref{5}. Again fix a decreasing
    sequence $\epsilon_n\to 0$ and set $\mathcal P_n:=\mathcal
    P_{\epsilon_n}$. Let $\pi':X\cup DX\to X_g$ denote the nearest point
    projection (this is not equivariant).

    Fix an $H$-equivariant exhaustion $\emptyset=K_0\subset K_1\subset
    K_2\subset\cdots$ of $X_g$ by finite connected subgraphs so that
    if $\beta\in X\cup DX$ and $\pi'(\beta)\not\in K_{i+1}$ then
    $\pi'(h(\beta))\not\in K_i$ for every $h\in H$.

    Call an element $P\in\mathcal P_n$ {\it good} if the following
    holds:
    \begin{itemize}
    \item $P\subset DX$,
      \item $\pi'(P)$ is a point, 
    \item $Stab_H(P)$ fixes a point $\rho(P)$ in $X_g$; moreover,
      if $\pi'(P)$ is disjoint from $K_{i+1}$ then 
      $\rho(P)$ and $\pi'(P)$ are in the same
      component of $X_g\smallsetminus K_i$,
      \item for every $x\in P$ there is a ray $r_x$ from $\rho(P)$ to $x$ so
        that all these rays (for all $x\in P$) agree along $X_g$ and
        further they are permuted up to proper homotopy by $Stab_H(P)$.
    \end{itemize}

So in particular $r_x$ is a Nielsen ray with respect to
$Stab_H(x)<Stab_H(P)$. We will also call
        the rays $r_x$ {\it Nielsen rays}.

\begin{lemma}
  For every $\beta\in DX$ there is $n_0$ so that for every $n\geq n_0$ the
  element $P\in \mathcal P_n$ containing $\beta$ is good.
\end{lemma}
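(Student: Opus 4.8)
The plan is to show that, for $n$ large, the element $P_n\in\mathcal P_n$ containing $\beta$ is good with $\rho(P_n)=\pi'(\beta)$. Write $v=\pi'(\beta)$ and let $T_v$ be the tree attached to $X_g$ at $v$ with $\beta\in\partial T_v$; then $T_v\smallsetminus\{v\}$ is a component of $X\smallsetminus X_g$ and of $X\smallsetminus\{v\}$, so $\partial T_v$ is clopen in $\partial X$, is contained in $DX$, and has $\pi'(\partial T_v)=\{v\}$. Since $\partial T_v$ is clopen and $\epsilon_n\to0$, for all large $n$ the partition $\mathcal P_n$ refines $\{\partial T_v,\ \partial X\smallsetminus\partial T_v\}$, hence $P_n\subseteq\partial T_v$; this already delivers the first two clauses of goodness, $P_n\subseteq DX$ and $\pi'(P_n)=\{v\}$ is a point. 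I will take $\rho(P_n):=v$, which makes the ``moreover'' part of clause (3) vacuous, so the remaining work is in the first assertion of (3) --- that $Stab_H(P_n)$ fixes $v$ --- and in clause (4).

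First I would record that $H_\beta:=Stab_H(\beta)$ fixes $v$. By Lemma \ref{fixed} applied to $H_\beta$ there are a fixed point $\rho(\beta)\in X_g$ and a Nielsen ray $r_\beta$ from $\rho(\beta)$ to $\beta$. Recall from that proof that the $H_\beta$-action lifts to the tree $\widetilde X$ so that each element fixes both a lift $z$ of $\rho(\beta)$ and the asymptotic class of a lift $\widetilde r_\beta$ of $r_\beta$; since a tree automorphism fixing a point and an end fixes the geodesic between them, the lifted group fixes $\widetilde r_\beta$ pointwise. But $\widetilde r_\beta$ is a geodesic ray from $z\in\widetilde X_g$ to an end over $\beta\in\partial T_v$, and a geodesic in the tree $\widetilde X$ that enters a lift of a branch never leaves it; hence $\widetilde r_\beta$ leaves $\widetilde X_g$ exactly at the vertex $\widetilde v$ at which the relevant lift of $T_v$ is attached, and $\widetilde v$ lies over $v$. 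So the lifted $H_\beta$ fixes $\widetilde v$, and $H_\beta$ fixes $v$.

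Next I would upgrade this to: $Stab_H(P_n)$ fixes $v$ for all large $n$. Because $d$ is $H$-invariant, $H$ preserves each $\mathcal P_n$, so the $Stab_H(P_n)$ are closed subgroups, and they are decreasing (for $m\ge n$, $P_m\subseteq P_n$, and anything fixing $P_m$ setwise fixes the $\mathcal P_n$-class $P_n$). Since $\partial X$ is compact, metrizable and totally disconnected, $\diam P_n\to0$, so $\bigcap_nP_n=\{\beta\}$ and hence $\bigcap_n Stab_H(P_n)=Stab_H(\beta)=H_\beta$. The orbits $O_n:=Stab_H(P_n)\cdot v$ are finite (an orbit of a vertex under a compact simplicial action) and decreasing, so they stabilize to a finite set $O_\infty$; for each $w\in O_\infty$ the sets $\{g\in Stab_H(P_n):gv=w\}$ are nonempty, closed and decreasing in the compact group $H$, so they have a common point, which lies in $H_\beta$ and sends $v$ to $w$, forcing $w=v$. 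Thus $O_\infty=\{v\}$ and $Stab_H(P_n)$ fixes $v$ for all large $n$; with $\rho(P_n):=v$, clause (3) holds.

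Finally I would construct the rays and check clause (4). For $x\in\partial T_v$ let $r_x$ be the geodesic ray from $v$ to $x$ in the tree $T_v$; then $r_x\cap X_g=\{v\}$, so these rays agree along $X_g$. For the equivariance, fix $g\in Stab_H(P_n)$, which fixes $v$ by the previous step. The key point I would use is that a proper homotopy equivalence of $X$ fixing the vertex $v$ respects the partition of $\partial X$ into the clopen sets $\{\partial\widehat C:C\in\pi_0(X\smallsetminus\{v\})\}$: choosing a representative with $g^{-1}(v)=\{v\}$, a ray out of $v$ into a component $C$ is carried to a connected subset of $X\smallsetminus\{v\}$, hence into a single component, and this gives a bijection of $\pi_0(X\smallsetminus\{v\})$ compatible with the induced homeomorphism of $\partial X$. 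Since $g(P_n)=P_n$ and $\emptyset\neq P_n\subseteq\partial T_v=\partial\widehat{T_v\smallsetminus\{v\}}$, the block $g(\partial T_v)$ meets $\partial T_v$, so $g(\partial T_v)=\partial T_v$, i.e. $g$ preserves the component $T_v\smallsetminus\{v\}$. Lifting to $\widetilde X$ with $\widetilde g$ fixing $\widetilde v$, the same reasoning (only one lift of $T_v$ is attached at $\widetilde v$) shows $\widetilde g$ preserves $\widetilde T_v$; covering $g$, it carries the lift of $r_x$, for $x\in P_n$, to a ray from $\widetilde v$ to $\widetilde{gx}\in\partial\widetilde T_v$, whose tightening is the lift of $r_{gx}$. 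Projecting down, $g(r_x)$ is properly homotopic rel $v$ to $r_{gx}$, and there is no further ambiguity since $\pi_1(T_v)=1$. Hence $Stab_H(P_n)$ permutes $\{r_x:x\in P_n\}$ up to proper homotopy, clause (4) holds, and $P_n$ is good for $n$ beyond both thresholds above. The step I expect to need the most care is this last one --- precisely the assertion that a proper homotopy equivalence of $X$ fixing a vertex respects the $\pi_0(X\smallsetminus\{v\})$-partition of the end space (one must choose the representative carefully and argue through the end compactification) --- together with the companion observation in the second paragraph, where one has to look inside the proof of Lemma \ref{fixed}, not merely at its statement, to see that the Nielsen ray forces $H_\beta$ to fix $v$.
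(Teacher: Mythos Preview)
Your argument has a genuine error in the second paragraph: the claim that $H_\beta$ fixes $v=\pi'(\beta)$ is false in general. The step that fails is ``since a tree automorphism fixing a point and an end fixes the geodesic between them, the lifted group fixes $\widetilde r_\beta$ pointwise.'' The lifted action on $\widetilde X$ is only by proper homotopy equivalences; the simplicial action is only on $\widetilde X_g$, and these are different. The lifted $\widetilde h$ fixes $z$ because its restriction to $\widetilde X_g$ is simplicial, and it fixes the end $[\widetilde r_\beta]$ by choice of lift, but nothing forces it to fix the intermediate point $\widetilde v$, which lies in $\widetilde X_g$ but is not determined by the simplicial action on $\widetilde X_g$ together with $z$.

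Here is a concrete counterexample. Let $X_g$ be the theta graph (two vertices $p,q$, three edges) and let $H=\mathbb Z/3$ act by cyclically permuting the edges; this fixes $p$ and $q$ but permutes the edge midpoints. Attach a single ray at the midpoint $v$ of one edge. Then $DX=\{\beta\}$, so $H_\beta=H$, and $\pi'(\beta)=v$ is not fixed by $H$. Lemma~\ref{fixed} still produces a fixed point $\rho(\beta)\in\{p,q\}$ and a Nielsen ray that first travels through $X_g$ to $v$ and then out the ray, but $\rho(\beta)\neq v$. Your choice $\rho(P_n)=v$ therefore cannot satisfy clause~(3), and the ray construction in your fourth paragraph (which relies on $Stab_H(P_n)$ preserving the single branch $T_v$) breaks down as well. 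The paper avoids this by taking $\rho(P_n)$ to be the Nielsen fixed point $\rho(\beta)$ rather than $\pi'(\beta)$, and then using the exhaustion $K_i$ to locate $\rho(\beta)$ in the correct complementary component; this is exactly what the ``moreover'' clause is there to record, and it is not vacuous. Your third paragraph (the compactness argument passing from $H_\beta$ to $Stab_H(P_n)$) is correct and is essentially the paper's argument---it just needs to be applied to $\rho(\beta)$ rather than to $v$.
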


\begin{proof}
  We first observe that $Stab_H(\beta)$ fixes a point in $X_g$ by
  applying Lemma \ref{fixed} to the induced action of $Stab_H(\beta)$
  on the graph $X_g^*=X_g\cup \rho_\beta$ (see Lemma
  \ref{restriction}). By our assumption on the exhaustion, if
  $\pi'(\beta)$ misses $K_{i+1}$ then the action restricts to the
  complementary component of $K_i$ that contains $\pi'(\beta)$, so in
  this case the fixed point $\rho(\beta)$ can be found there. Now notice that the
  stabilizer of a point in $X_g$ is a clopen subgroup of $H$, so when
  $n$ is large the stabilizer of $P_n\in \mathcal P_n$ that contains
  $\beta$ will also fix the same point. (Since $H$ permutes the
  partition elements in $\mathcal P_n$, $Stab_H(\beta)$ will leave $P_n$
  invariant and we see that $Stab_H(\beta)=\cap_n Stab_H(P_n)$ is the
  intersection of clopen subgroups. By compactness we have
  $Stab_H(P_n)\subseteq Stab_H(\rho(\beta))$ for large $n$.)
  We will of course also have
  $P\subset DX$, $\pi'(P)$ is a point, and $h(\ell)\cap X_g=\emptyset$
  for every line $\ell$ joining two points of $P$ and every $h\in H$.
\end{proof}

Now we construct an $H$-equivariant cover $\mathcal N$ by pairwise
disjoint good partition elements. Say an $H$-orbit in $\mathcal P_n$
(which is finite) is {\it good} if every (any) element in it is good. Then
let $\mathcal N$ consist of good orbits in $\mathcal P_1$ as well as
those good orbits in $\mathcal P_n$, $n=2,3,\cdots$ whose union is not
contained in the union of any good orbit in $\mathcal P_{n-1}$.
Define an equivariant map $\rho:\mathcal N\to X_g$ by letting $\rho$
be as in the definition of a good partition element on a
representative of the orbit, and then extend it equivariantly. Thus we
still have the Nielsen rays for all elements of $\mathcal N$.

We now construct a graph $Y$ by attaching trees to $Y_g=X_g$. For
every $N\in \mathcal N$ we build a tree $T_N$ as in Step 3 of Section
\ref{5} for $Stab_H(N)$, namely the mapping telescope with base vertex $N$ and the
other vertices all the partition elements contained in $N$. We
identify $\partial T_N$ with $N$. We then attach $T_N$ to $X_g$ by
identifying the base vertex $N$ with the point $\rho(N)\in X_g$. Doing
this for all $N\in \mathcal N$ produces the desired graph $Y$. By
construction $H$ acts on $Y$ by simplicial isomorphisms.

\begin{lemma}
  There is a proper homotopy equivalence $F:Y\to X$ such that
  \begin{enumerate}[(a)]
    \item $F$ is identity on
      $X_g$ and on $DX=DY$,
      \item $F$ sends the rays in $T_N$ based at $N$ to the Nielsen rays $r_x$
        from $\rho(N)$ to
        $\partial N$ preserving the endpoints,
                  \item
            $F$ is $H$-equivariant.
            \end{enumerate}
\end{lemma}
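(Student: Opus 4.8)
The plan is to build $F$ by specifying it separately on $Y_g=X_g$ and on each attached tree $T_N$, and then to recognize the resulting proper map as a proper homotopy equivalence by invoking Corollary~\ref{criterion2}. On $Y_g=X_g$ I set $F=\mathrm{id}$; this is forced by (a), and since $H$ acts by the \emph{same} simplicial isomorphisms on $X_g$ and on $Y_g$, equivariance is automatic there. For the bare existence of an extension to all of $Y$ that is identity on $X_g$ and induces the identity homeomorphism $\partial Y\to\partial X$ (note $DY=DX$ and $\partial Y_g=\partial X_g$), one can already appeal to Proposition~\ref{extension} applied with source graph $Y$, core $Y_g=X_g$, target $X$, boundary map the natural homeomorphism; and then Corollary~\ref{criterion2} immediately gives that any such proper $F$ is a proper homotopy equivalence, because its restriction to the cores is the identity of $X_g$ (a proper homotopy equivalence) and $\partial F$ is a homeomorphism. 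So the real content of the lemma is to carry out this extension tree-by-tree in a way that is simultaneously $H$--equivariant and realizes the Nielsen rays, i.e.\ conditions (b) and (c).

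To do this I would fix, for each $H$--orbit in $\mathcal N$, a representative $N$, write $v=\pi'(N)$, and note that $h(N)=N$ for $h\in Stab_H(N)$ forces $h(T_v)=T_v$ (the trees $T_v$ of $X$ have pairwise disjoint ends), so $Stab_H(N)$ acts on $T_v$, fixes $\rho(N)$ by goodness of $N$, and permutes the Nielsen rays $\{r_x\}_{x\in N}$ from $\rho(N)$ to $N$ up to proper homotopy rel endpoints. I would then construct a proper map $F_N\from T_N\to X_g\cup T_v\subset X$ which is $Stab_H(N)$--equivariant, sends the base vertex of the telescope to $\rho(N)$, induces the identity $\partial T_N=N\to N$, and sends each spine ray $\ell_x$ of $T_N$ to a ray properly homotopic rel endpoints to $r_x$; the rays $r_x$ already organize themselves consistently as a ``tree of arcs'' (they share the segment from $\rho(N)$ to $v$ and, inside the tree $T_v$, the unique arcs to distinct $x\in N$ fellow--travel and then branch), so such an $F_N$ is obtained by extending over the telescope $T_N$ level by level. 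At each finite level $Stab_H(N)$ acts through a finite quotient on the finitely many partition elements, so at each stage one can choose the images of the new vertices, the connecting edge--paths, and the homotopies comparing successive approximations to the $r_x$ all equivariantly, maintaining the inductive invariant that the image of the subtree of $T_N$ below a level--$n$ vertex $P$ lies in a neighborhood of $P$ in $\hat X$ whose $\hat X$--diameter tends to $0$ as $n\to\infty$; this invariant makes $F_N$ proper and forces $\partial F_N=\mathrm{id}_N$. Finally I extend $F$ over the rest of the orbit by $F|_{T_{hN}}=h\circ F_N\circ h^{-1}$ for $h\in H$ (well defined because $F_N$ is $Stab_H(N)$--equivariant), redefining the Nielsen rays for the non--representative $N'$ by the same formula so that (b) holds on the nose; doing this over all orbits of $\mathcal N$ and gluing with $F|_{X_g}=\mathrm{id}$ produces an $H$--equivariant $F\from Y\to X$.

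It then remains to verify that the assembled $F$ is proper. Given a finite $K\subset X$, only finitely many trees $T_N$ can have image meeting $K$: if $\rho(N)$ lies outside a large enough finite subgraph of $X_g$ then by the goodness condition (relating $\rho(N)$ and $\pi'(N)$ to the same component of $X_g\smallsetminus K_i$ when $\pi'(N)$ misses $K_{i+1}$) together with the choice of the $H$--equivariant exhaustion $\{K_i\}$ (for which $\pi'(\beta)\notin K_{i+1}$ implies $\pi'(h\beta)\notin K_i$) and the shrinking invariant above, the whole image $F(T_N)$ avoids $K$; each $F_N$ is itself proper, so $F$ is proper. Hence, as noted, $F$ is a proper homotopy equivalence by Corollary~\ref{criterion2}, and (a)--(c) hold by construction.

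I expect the main obstacle to be exactly the interaction of (b) and (c): the Nielsen rays are handed to us only up to proper homotopy, and the family $\{r_x\}$ is only $Stab_H(N)$--invariant up to proper homotopy, so they must be ``rigidified'' into a single honest equivariant map of the telescope $T_N$. The level--by--level extension is the tool, and the point that makes it go through is that $Stab_H(N)$ acts through a \emph{finite} group on every finite truncation of $T_N$, so at each stage the required equivariant choices (vertex images, edge--paths, and comparison homotopies) exist and can be made compatibly across levels. A secondary but genuine point, handled by the goodness conditions and the equivariant exhaustion, is to keep the glued map proper after the equivariant extension has propagated the construction across all of $\mathcal N$.
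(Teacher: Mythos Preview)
Your overall strategy---define $F$ as the identity on $X_g$, extend it tree by tree so that spine rays go to Nielsen rays, check properness, and invoke Corollary~\ref{criterion2}---matches the paper's. The properness argument is also essentially the same: if $N_i\in\mathcal N$ converge to $\beta\in\partial X_g$ then $\rho(N_i)\to\beta$ by the goodness condition relating $\rho(N)$ and $\pi'(N)$, so only finitely many $F(T_N)$ meet a given compact.

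Where you diverge, and where there is a genuine gap, is in handling (c). You try to build $F$ so that it is \emph{strictly} $Stab_H(N)$--equivariant via a level-by-level construction on each telescope, arguing that the stabilizer acts through finite quotients at each level so equivariant choices exist. The problem is that $H$ acts on $X$ only up to proper homotopy, not by honest maps, so there is no strict action of $Stab_H(N)$ on the target against which to make ``equivariant choices'' of vertex images and edge--paths. Your auxiliary claim that $h(N)=N$ forces $h(T_v)=T_v$ is also unjustified: $\partial T_v$ may strictly contain $N$, and in any case $h|_X$ is only a proper homotopy class, so $h(T_v)$ is not a well-defined subset. The rigidification you propose therefore has no well-posed target.

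The paper sidesteps this completely. It observes that (a) and (b) already determine $F$ on each $T_N$ up to proper homotopy (since $T_N$ is a tree and the Nielsen rays prescribe all spine rays), and then verifies equivariance \emph{a posteriori}---in the only sense that matters here, namely $hF\simeq Fh$---by applying Corollary~\ref{criterion} to $F'hFh^{-1}:Y\to Y$. That map is the identity on $Y_g$ and on $\partial Y$, so one only needs to check that it preserves loops (immediate) and lines between points of $DY$; any such line is a concatenation $r^{-1}sr'$ of Nielsen rays and a segment $s\subset X_g$, and the defining property of a good partition element is precisely that $H$ permutes the Nielsen rays up to proper homotopy. This is exactly what Corollary~\ref{criterion} was set up for, and it makes the level-by-level rigidification unnecessary.
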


\begin{proof}
  The map $F$ is uniquely defined on each $T_N$ by (a)-(c). That this
  map is proper as a map $Y\to X$ follows from the fact that if
  $N_i\in \mathcal N$ converge to $\beta\in\partial X_g$, then
  $\rho(N_i)\to\beta$. Thus $F$ is a proper homotopy equivalence by
  Corollary \ref{criterion2}.

  Finally we argue $H$-equivariance. Denote by $F'$ the proper
  homotopy inverse of $F$ which is identity on $X_g$. If $h\in H$
  consider $F'hF\cdot h^{-1}:Y\to Y$. This is identity on $X_g$ and on
  $\p X$. By Corollary \ref{criterion} it suffices to argue that this
  map preserves oriented loops and lines connecting points of
  $DX$. For loops this is clear since the map is identity on $X_g$. It
  also preserves lines joining points of $DX$ since such lines can be written as a concatenation
  $r^{-1}sr'$ where $r,r'$ are Nielsen rays and $s$ is a segment in
  $X_g$. Finally, it preserves lines that connect distinct points of
  some $N\in \mathcal N$. 
\end{proof}

This finishes the proof of the Main Theorem.

%    Bibliographies can be prepared with BibTeX using amsplain,
%    amsalpha, or (for "historical" overviews) natbib style.
\bibliographystyle{amsplain}
%    Insert the bibliography data here.

\end{document}